
\documentclass[12pt]{article}
\usepackage{amsmath,amsthm,amsfonts,amssymb}

\topmargin -.3in
\hoffset=-.5in
\setlength{\textwidth}{6.5in}
 \textheight 9in

\newtheorem{theorem}{Theorem}[section]
\newtheorem{lemma}[theorem]{Lemma}
\newtheorem{cor}[theorem]{Corollary}
\newtheorem{prop}[theorem]{Proposition}
\newtheorem{assumption}[theorem]{Assumption}
\newtheorem{remark}[theorem]{Remark}

\numberwithin{equation}{section}

\newcommand {\IN}{\mathbb{N}}  
\newcommand {\IR}{\mathbb{R}}  

\newcommand {\cF}{\mathcal{F}}  
\newcommand {\cG}{\mathcal{G}}  
\newcommand {\cH}{\mathcal{H}}  

\newcommand {\rpl}{r_{q,\ell}}
\newcommand {\lsp}{\ell_q}
\usepackage{color}

\begin{document}

\title{Polarity of almost all points for systems of non-linear
stochastic heat equations in the critical dimension}
\author{Robert C.~Dalang$^1$
\\Ecole Polytechnique F\'ed\'erale de Lausanne
\and
Carl Mueller$^2$
\\University of Rochester
\and Yimin Xiao$^3$ \\
Michigan State University
}
\date{}
\maketitle

\footnotetext[1]{Supported in part by the Swiss National Foundation for Scientific Research.}
\footnotetext[2]{Supported by a Simons grant.}
\footnotetext[3]{Supported by NSF grants DMS-1607089 and DMS-1855185.

{\em MSC 2010 Subject Classifications.} Primary,  60G15; Secondary, 60J45, 60G60.

{\em Key words and phrases.} Hitting probabilities, polarity of points, critical dimension, nonlinear stochastic partial differential equations.

}

\begin{abstract}
We study vector-valued solutions $u(t,x)\in\mathbb{R}^d$ to systems of
nonlinear stochastic heat equations with multiplicative noise:
\begin{equation*}
\frac{\partial}{\partial t} u(t,x)=\frac{\partial^2}{\partial x^2} u(t,x)+\sigma(u(t,x))\dot{W}(t,x).
\end{equation*}
Here $t\geq0$, $x\in\mathbb{R}$ and $\dot{W}(t,x)$ is an $\IR^d$-valued space-time
white noise.  We say that a point $z\in\mathbb{R}^d$ is polar if
\begin{equation*}
P\{u(t,x)=z\text{ for some $t>0$ and $x\in\mathbb{R}$}\}=0.
\end{equation*}
We show that in the critical dimension $d=6$, almost all points in $\IR^d$ are polar.
\end{abstract}

\section{Introduction}
We say that a vector-valued stochastic process $(X_t,\, t\in I)$ hits a set $B$ if
\begin{equation*}
P\{X_t\in B\text{ for some $t\in I$}\}>0.
\end{equation*}
Hitting properties constitute one of the most intensively studied
topics in probability theory.  For many Markov processes,
probabilistic potential theory gives a powerful set of tools for
answering such questions \cite{BG,doob,khosh2}.  However, for processes taking
values in infinite dimensional spaces, potential theoretic
calculations are usually intractable and we must fall back on more
basic methods, such as covering arguments.

We also note that such hitting questions are always the most
difficult in the critical dimension, and we expect that hitting does
not occur in the critical dimension.  For example, if a family of vector-valued processes $(X_t^{(d)})$ can be defined so that for each $d \geq 1$, $X_t^{(d)}$ takes values in $\IR^d$, and if $B=\{z\}$ is a one-point set in $\IR^d$, we say
that $d_c$ is the critical dimension if hitting of $B$ occurs for
$d<d_c$ but not for $d> d_c$ (often, the superscript $d$ is omitted from the notation, as in \eqref{eq:she} and \eqref{eq:spde-main}).  For many natural families of such processes, we can often identify the
critical dimension $d_c$ even if we usually cannot prove that hitting of points fails to occur in that dimension.

In this paper, we deal with vector-valued solutions $u(t,x)$ to the
stochastic heat equation
\begin{align}
\label{eq:she}
\frac{\partial}{\partial t}u(t,x)
&=\frac{\partial^2}{\partial x^2}u(t,x)+\sigma(u(t,x))\dot{W}(t,x), \\
u(0,x)&=u_0(x),  \nonumber
\end{align}
where $x\in\mathbb{R}$,  $\dot{W}$ is a vector of $d$ independent
space-time white noises, and  $\sigma$ is matrix-valued.  We give more
precise conditions in the next section.

It would be possible to consider the solution $u(t,\cdot)$ as a stochastic process
parameterized by $t$ taking values in function space.  In view of the
difficulties mentioned above, we restrict ourselves to the question
of hitting points.  We say that $(u(t,\cdot),\, t \in\IR_+)$ hits the point $z \in \IR^d$ if
\begin{equation*}
P\left\{u(t,x)=z\text{ for some $t>0$ and $x\in\mathbb{R}$}\right\}>0.
\end{equation*}
So we are asking whether $(u(t,\cdot),\, t \in \IR_+)$ can hit the set $B$ of continuous functions $f(x)$ such that $f(x)=z$ for some value of $x\in\IR$.

Defined in this way, the question of hitting probabilities for
stochastic partial differential equations (SPDE) has been studied by
a number of authors, see \cite{mueller_tribe,DN,DKN1,DKN2,dal_sanz1}.  They obtain results for a
broad class of sets $B$, but only \cite{mueller_tribe,DN} deal with the critical dimension.
There are also some earlier papers about the question
of whether random fields can hit points or other sets.  For the vector-valued Brownian sheet, Orey and Pruitt \cite{op73} gave a necessary and sufficient condition for hitting points, and Khoshnevisan and Shi \cite{ks} developed a complete potential theory which answers the hitting question for any set.  Both groups of authors used special properties of the Brownian sheet.  For fractional Brownian fields, Talagrand \cite{T1,T2} answered the question of whether the process can hit points, including in the critical dimension. He also dealt with multiple points.

Building on Talagrand's methods, the article \cite{DMX} proved that for a broad class of Gaussian random fields, points are not hit in the critical dimension.  This paper also provided a general framework for this type of problem.  Some follow-up papers also deal with the question of multiple points for Gaussian random fields \cite{DLMX,DKLMX}.


In this paper, we deal with the nonlinear stochastic heat equation \eqref{eq:she} and show that in the critical dimension, which was known to be $d_c=6$ (see \cite{mueller_tribe,DKN2}), almost every point is polar, where ``almost every" is with respect to Lebesgue measure.  We will give a more precise statement
in the next section.  Because of the multiplicative noise term, the
equation is nonlinear and in most cases $u(t,x)$ will not be a
Gaussian process.  It is usually difficult to carry
over results about Gaussian processes to more general processes. However, it is well-known that on small scales,
$u(t,x)$ resembles a Gaussian process. By freezing in particular the coefficient $\sigma(u)$, it becomes possible to carry over many of the arguments
from \cite{DMX}.  However, we are still unable to prove that {\em all} points are polar in the critical dimension.
As part of our proof, we show that in dimensions
$d\geq6$, the 6-dimensional Hausdorff measure of the range of $u$ is 0.

For the linear heat equation, where $\sigma \equiv 1$ and the solution of \eqref{eq:she} is a Gaussian random field, the extra step that allows to go from ``almost all points are polar" to ``all points are polar" involves taking the conditional expectation of the random field given its value at a specific point (see \cite[Section 5]{DMX}). In the Gaussian case, conditional expectations can be computed explicitly, but in the nonlinear SPDE where $\sigma \not\equiv 1$, this is no longer true and a new argument seems to be needed.

We should also mention that because we can only show that almost
every point is polar in the critical dimension, we do not expect for the moment to be able to extend the results of this paper to the question of existence of multiple points and show that there are no multiple points in the critical
dimensions (except in the cases handled by \cite{DLMX,DKLMX} where $\sigma(u)$ is constant and so $u(t,x)$ is a Gaussian process). 


\section{Setup and main theorem}

Let $\sigma: \IR^d \to \IR^d \times \IR^d$ be a matrix function. We are dealing with solutions $u(t,x)$ to the system of $d$ equations
\begin{align}
\label{eq:spde-main}
\frac{\partial}{\partial t} u(t,x)&=\frac{\partial^2}{\partial x^2} u(t,x) +\sigma(u(t,x))\dot{W}(t,x), \qquad t >0,\ x \in \IR,
\end{align}
where $\dot W(t,x) = (\dot W_1(t,x),\dots,\dot W_d(t,x))$ is a $d$-dimensional space-time white noise (see \cite{khosh1}) defined on a probability space $(\Omega,\cF,P)$, with i.i.d.~components, subject to the initial condition
\begin{align}
u(0,x)&=u_0(x), \qquad   x \in \IR,
\end{align}
where $u_0: \IR \to \IR^d$ is Borel. We associate to the white noise its natural filtration $(\cF_t,\, t \in \IR_+)$, where $\cF_t$ is the $\sigma$-field generated by the white noise on $[0,t] \times \IR$ (and completed with $P$-null sets).

For an element $z \in \IR^d$, $\vert z \vert$ denotes the Euclidean norm of $z$. We use the same notation for a matrix $\sigma_0 \in \IR^{d \times d} \cong \IR^{d^2}$.

\begin{assumption}\label{assump2.1}
  (a) The function $\sigma: \IR^d \to \IR^{d \times d}$ is Lipschitz continuous with Lipschitz constant $L$: for all $v_1,v_2 \in \IR^d$,
\begin{equation*}
|\sigma(v_1)-\sigma(v_2)|\leq L|v_1-v_2|.
\end{equation*}
 (b) There is a finite constant $\sigma_1 \in \IR$ such that for all $v \in \IR^d$,
\begin{equation*}
|\sigma(v)|\leq\sigma_1.
\end{equation*}
(c) The initial function $u_0$ is bounded: there is $K_0 \in \IR_+$ such that, for all $x\in \IR$,
\begin{equation*}
|u_0(x)| \leq K_0.
\end{equation*}
\end{assumption}

Finally, we note that (\ref{eq:spde-main}) has a rigorous formulation in
terms of the mild form, see \cite{dal_sanz}: $(u(t,x),\ (t,x) \in \IR_+ \times \IR)$ is a jointly measurable and $(\cF_t)$-adapted process such that, for all $(t,x) \in \IR_+ \times \IR$,
\begin{equation*}
u(t,x)=\int_{-\infty}^{\infty}G(t,x-y)u_0(y)dy
 +\int_{0}^{t}\int_{-\infty}^{\infty}G(t-s,x-y)\sigma(u(s,y))W(dy,ds)
\end{equation*}
where
\begin{equation*}
G(t,x)=\frac{1}{\sqrt{4\pi t}}\exp\left(-\frac{x^2}{4t}\right)
\end{equation*}
is the heat kernel on $\IR$. Existence and uniqueness is proved in \cite[Chapter 3]{walsh} in the case $d=1$, and this proof extends directly to $d \geq 1$ (see \cite[Section 2]{DKN2}). The random field $(u(t,x))$ has a continuous version on $]0,\infty[\, \times \IR$ (see \cite{walsh}), and if the initial condition $u_0$ is continuous (which we do not assume here), then this version of $(u(t,x))$ is continuous on $\IR_+ \times \IR$ \cite[Theorem 3.1]{chen_dal}. We will work only with this continuous version.

The main result of this paper is the following. For the definition of Hausdorff measure, see \cite[Appendix C]{khosh2}.

\begin{theorem}\label{thm11.4}
Assume that $d\geq 6$. Almost surely, the range of $u=(u(t,x),\, (t,x) \in \,]0,\infty[ \times \IR)$ has $6$-dimensional Hausdorff-measure $0$. In particular, if $d\geq 6$, then (Lebesgue) almost all points in $\IR^d$ are polar for $u$.
\end{theorem}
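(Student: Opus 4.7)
The polarity assertion is an immediate consequence of the Hausdorff measure statement: when $d = 6$, the measures $\mathcal{H}^6$ and $\mathcal{L}^6$ are comparable on $\IR^6$, so $\mathcal{H}^6(u(R)) = 0$ implies $\mathcal{L}^6(u(R)) = 0$; when $d > 6$, $\mathcal{H}^6(u(R)) = 0$ forces $\dim_H u(R) \leq 6 < d$ and hence $\mathcal{L}^d(u(R)) = 0$. Fubini's theorem then yields that Lebesgue-almost every $z \in \IR^d$ lies outside the range and so is polar. It therefore suffices to prove $\mathcal{H}^6(u(R)) = 0$ a.s.\ for each bounded space-time rectangle $R = [\epsilon, T] \times [-M, M]$, since $\,]0,\infty[\,\times\IR$ is a countable union of such rectangles and Hausdorff measure is countably subadditive.

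For the Hausdorff bound, I would combine a dyadic covering with a small-ball hitting estimate. The key input is a uniform bound
\begin{equation*}
P\{u(R) \cap B(z, r) \neq \emptyset\} \leq C\, \psi(r), \qquad r \in (0, 1],\ z \in [-N, N]^d,
\end{equation*}
where $\psi$ is tuned so that, after multiplication by the number $\sim (N 2^n)^d$ of $2^{-n}$-cubes needed to cover $[-N, N]^d$ and by the factor $2^{-6n}$, the result tends to zero. At criticality $d = 6$ this requires $\psi(r) = r^6 g(r)$ with $g(r) \downarrow 0$, and in the supercritical regime $d > 6$ an analogous bound $\psi(r) = C r^{d-6} g(r)$ with $g(r) \downarrow 0$ suffices. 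Given such an estimate, if $\mathcal{N}_n$ denotes the number of dyadic cubes of side $2^{-n}$ that meet $u(R) \cap [-N, N]^d$, then
\begin{equation*}
\mathcal{H}^6_{c\, 2^{-n}}(u(R) \cap [-N, N]^d) \leq C\, \mathcal{N}_n \cdot 2^{-6n},
\end{equation*}
whose expectation goes to $0$; monotonicity of the Hausdorff $\delta$-content gives $\mathcal{H}^6(u(R) \cap [-N, N]^d) = 0$ a.s., and letting $N \uparrow \infty$ completes the argument.

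The principal obstacle is producing the hitting bound $\psi(r)$ for the nonlinear equation. To do so, I would exploit the Walsh mild form
\begin{equation*}
u(t, x) = \int_{\IR} G(t, x - y)\, u_0(y)\, dy + \int_0^t\!\! \int_{\IR} G(t - s, x - y)\, \sigma(u(s, y))\, W(dy, ds),
\end{equation*}
fix a base point $(t_0, x_0) \in R$, and compare $u$, on a small parabolic neighborhood of $(t_0, x_0)$, with the centered Gaussian random field obtained by freezing $\sigma(u(s, y))$ at its value $\sigma(u(t_0, x_0))$; Assumption \ref{assump2.1}(a) together with standard $L^p$-moment regularity of $u$ would control the freezing error uniformly in $(t_0, x_0)$. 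The frozen Gaussian field then falls in the scope of \cite{DMX}, whose sharp critical-dimension estimates supply $\psi$. Producing the logarithmic improvement $g(r) \downarrow 0$ uniformly in $z$ and the base point is the delicate step, and the absence of a tractable conditioning argument for \eqref{eq:spde-main}---conditional distributions cannot be computed explicitly when $\sigma$ is not constant---is what confines the conclusion to ``almost every'' rather than ``every'' point in $\IR^d$.
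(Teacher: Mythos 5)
Your opening reduction --- passing from $\mathcal{H}^6(\mathrm{range}) = 0$ to ``almost all points polar'' via Fubini, and reducing to one compact parabolic rectangle by countable subadditivity --- is correct and matches what the paper does at the end of Section~\ref{sec7}.

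The proposed route to $\mathcal{H}^6(\mathrm{range}) = 0$, however, has a fundamental gap. You are basing the covering on a pointwise small-ball hitting estimate
$P\{u(R) \cap B(z,r) \neq \emptyset\} \leq C\,\psi(r)$
holding uniformly in $z$ with $\psi(r) \downarrow 0$. If such an estimate were available it would already give, by sending $r \downarrow 0$, that $P\{z \in u(R)\} = 0$ for \emph{every} fixed $z$ --- i.e.\ \emph{all} points polar --- which is precisely the statement the authors say, in the introduction, that they cannot prove in the nonlinear setting (because it requires a conditioning step available only when $\sigma$ is constant). So the key input you assume is strictly stronger than the theorem you are trying to prove; it cannot be extracted from the $L^p$-regularity and Lipschitz freezing you cite, and it is not what \cite{DMX} provides. (There is also a secondary slip: for a field with parabolic source dimension $6$ and target dimension $d = 6$, the hitting probability of $B(z,r)$ is of order $1/\log(1/r)$, not $r^6 g(r)$; your factor $r^6$ is spurious.)

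The paper sidesteps the hitting estimate entirely by covering the \emph{source} rectangle $R_0 = [1,2]\times[0,1]$, not the target cube in $\IR^d$. The engine is Proposition~\ref{prop10.3}, imported from \cite[Prop.\ 2.3]{DMX}: with probability $\geq 1 - \exp(-\sqrt{q})$, around each base point $(t_0,x_0)$ one can find a scale $r = \rpl \in [2^{-2q},2^{-q}]$ at which the dominant Gaussian piece $N^{(0)}$ has oscillation $\leq \tilde K f(r)$ with $f(r) = r(\log_2\log_2(1/r))^{-1/6}$; Lemmas~\ref{lem6.1}, \ref{lem7.2}, \ref{lem8.1} show the non-Gaussian error terms are of strictly smaller order, so the frozen process $w_{t_0,x_0,r}$ (hence $\tilde u$) inherits this anomalously small oscillation. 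Theorem~\ref{thm10.2} then produces, for each large $q$, a family $\cH_q$ of non-overlapping anisotropic dyadic rectangles covering $R_0$ whose images lie in balls of radii $r_A$ satisfying $\sum_A \zeta(r_A) \leq K\lambda_2(R_0)$ with $\zeta(x) = x^6\log_2\log_2(1/x)$ (Lemma~\ref{lem11.2}); since the orders all exceed $q$, $\sum_A r_A^6 \leq (\log_2 q)^{-1}\sum_A\zeta(r_A) \to 0$ (Proposition~\ref{prop11.4}). The $(\log\log)^{-1/6}$ saving in $f$ is exactly what pushes the sum of sixth powers to zero; this is Talagrand's mechanism, and it does not require, at any stage, a uniform-in-$z$ hitting bound. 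Your sketch would need that bound as a black box, and it is not available.
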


This theorem is proved at the end of Section \ref{sec7}.

\begin{remark} For linear systems of stochastic heat equations ($\sigma$ constant), according to \cite{mueller_tribe} and \cite{DMX}, $d=6$ is the critical dimension for hitting points and points are polar when $d=6$. According to \cite[Corollary 1.5]{DKN2}, when the matrix function $\sigma$ is smooth and uniformly elliptic and $d>6$, then the Hausdorff dimension of the range of $u$ is precisely $6$. This implies of course that for $d>6$, almost all points in $\IR^d$ are polar. Therefore, the conclusions of Theorem \ref{thm11.4} are most interesting in the critical dimension $d=6$.
\end{remark}
\section{Local decomposition}\label{sec3}

In this section, our goal is to study the range of $(t,x) \mapsto u(t,x)$ when $(t,x)$ belongs to a small rectangle with center $(t_0,x_0) \in R_0 := [1,2]\times [0,1]$, where $t_0$ and $x_0$ are fixed. Throughout most of the paper, we will be working on subrectangles of $R_0$.

For $\rho \in \,]0,\tfrac{1}{2}]$, define
\begin{equation}\label{eqR0}
R_\rho = R_\rho(t_0,x_0) := \{(t,x)\in \IR_+ \times \IR: \vert t - t_0 \vert< \rho^4,\ \vert x - x_0 \vert  < \rho^2 \}.
\end{equation}
This rectangle has side-lengths that are compatible with the metric
\begin{equation*}
d((t,x);(s,y)) = \Delta(t-s,x-y) := \max(|t-s|^{1/4},|x-y|^{1/2}).
\end{equation*}

We often write $p$ for a couple $(t,x)\in\IR^2$.
Finally, we define the oscillation of an $\IR^d$-valued function $f$ on a rectangle $R\subset \IR^2$ as follows:
\begin{equation*}
\text{osc}_R(f)=\sup_{p_1,p_2\in R}|f(p_1)-f(p_2)|.
\end{equation*}


In a first stage, we would like to replace $(t,x) \mapsto u(t,x)$ by a modified process obtained by freezing coefficients at stopping times, so that certain  regularity and growth conditions are satisfied. We will also do this for an associated Gaussian process. For this, we define a first stopping time $\tau_{K,1}$ that will help with H\"older-continuity properties of the solution, then a stopping time $\tau_{K,2}$ that will deal with growth as $x \to \pm\infty$, and a third stopping time $\tau_{K,3}$ that will help with an associated Gaussian process.
\vskip 12pt

\noindent{\em First stopping time $\tau_{K,1}$}
\vskip 12pt

Fix $T_0 >3$ and a large constant $K >0$. From \cite[Theorem 3.1]{chen_dal}, we know that $u(t,x)$ is locally $(1-\delta)/4$-H\"older continuous in $t$ and $(1-\delta)/2$-H\"older continuous in $x$ on $]0,\infty[\,\times \IR$.  More precisely, for each $\delta\in\,]0,1[$, there is an almost surely finite positive random variable $Z$ such that for all $s,t \in [\tfrac{1}{2},T_0]$ and $x,y\in[-2,2]$,
\begin{equation}
\label{eq:u-modulus}
|u(t,x)-u(s,y)| \leq Z \Delta(t-s,x-y)^{1-\delta}.
\end{equation}

Now we define the stopping time $\tau_{K,1}$ to be the first time
$t\in[\tfrac{1}{2},T_0]$ such that there exist
$x,y\in  [-2,2]$ with
\begin{equation*}
|u(t,x)-u(s,y)|\geq K\Delta(t-s, x-y)^{1-\delta};
\end{equation*}
if there is no such time $t$, let $\tau_{K,1}=T_0$.

Now (\ref{eq:u-modulus}) shows that
\begin{equation}
\label{eq:stopping-time-1}
\lim_{K\to\infty}P\left\{\tau_{K,1}<T_0 \right\}=0.
\end{equation}
Also note that $u(t\wedge\tau_{K,1},x)$ satisfies
\begin{equation}\label{eq:3.4}
|u(t_1\wedge\tau_{K,1},x_1)-u(t_2\wedge\tau_{K,1},x_2)|\leq
K\Delta\big(t_1 -t_2,x_1-x_2 \big)^{1-\delta},
\end{equation}
for $(t_i,x_i)\in [\tfrac{1}{2},T_0] \times [-2,2]$.
\vskip 12pt

\noindent{\em Modified solution $\tilde{u}$}
\vskip 12pt

We will modify the random field $u$ using $\tau_{K,1}$. 
We define $\tilde{u}(t,x)=\tilde{u}_K(t,x)$ as the (continuous version on $]0,\infty[\,\times \IR$ of the) solution of
\begin{align*}
\frac{\partial}{\partial t} \tilde{u}(t,x)&= \frac{\partial^2}{\partial x^2} \tilde{u}(t,x)
 +\sigma(u(t\wedge\tau_{K,1},x))\dot{W}(t,x),  \qquad t >0,\ x \in \IR,  \\
\tilde{u}(0,x)&=u_0(x), \qquad x \in \IR.
\end{align*}
Note that on the right-hand side of the equation for $\tilde u$, $\sigma$ is evaluated at $u$, not at $\tilde{u}$.  In terms of the mild form,
\begin{align}\nonumber
\tilde{u}(t,x)&=\int_{-\infty}^{\infty}G(t,x-y)u_0(y)dy  \\
&\qquad +\int_{0}^{t}\int_{-\infty}^{\infty}G(t-s,x-y)
 \sigma(u(s\wedge\tau_{K,1},y))W(dy,ds).
 \label{e3.5}
\end{align}
Finally, note that on $\{\tau_{K,1}=T_0\}$,
we have that $u(t,x)=\tilde{u}(t,x)$ for all $(t,x)\in [0,T_0]\times \IR$.  Thus,
\begin{equation}
\label{eq:prob-equality-tilde}
\lim_{K\to\infty}P\left\{u(t,x)=\tilde{u}_K(t,x)
 \text{ for all }(t,x)\in [0,T_0]\times \IR\right\}=1.
\end{equation}
For the time being, we will work with $\tilde{u}$.
\vskip 12pt

\noindent{\em Second stopping time $\tau_{K,2}$}
\vskip 12pt

We also want to control the growth of our solution $\tilde u$ as $x \to \pm \infty$.  Let
$\tau_{K,2}$ be the first time $t\in[0,T_0]$ such that there exists $x\in\IR$ with
\begin{equation*}
|\tilde{u}(t,x)|\geq K(1+|x|).
\end{equation*}
If there is no such time $t$, let $\tau_{K,2}=T_0$.

Since we are assuming that $\sigma$ and our initial function $u_0(x)$ are bounded (Assumption \ref{assump2.1}(b) and (c)), it is a consequence of Lemma \ref{lem6.8} below (taking $\phi(r,z) = \sigma(u(r\wedge\tau_{K,1},z))$ in \eqref{eq:n-def-1} and $\phi_1 = \sigma_1$ in \eqref{eq:n-def-1a}) that
\begin{equation}
\label{eq:stopping-time-2}
\lim_{K\to\infty}P\left\{\tau_{K,2}<T_0 \right\}=0.
\end{equation}
\vskip 12pt

\noindent{\em Third stopping time $\tau_{K,3}$}
\vskip 12pt

We also work with the (continuous version on $]0,\infty[\,\times \IR$ of the) following linear system of stochastic heat equations with additive noise:
\begin{align}
\label{eq:spde-additive-noise}
\frac{\partial}{\partial t}  v(t,x)&= \frac{\partial^2}{\partial x^2} v(t,x)+\dot{W}(t,x), \qquad t >0,\ x \in \IR,  \\
v(0,x)&=u_0(x), \qquad  x \in \IR.  \nonumber
\end{align}
Now we define $\tau_{K,3}$ in the same way as
$\tau_{K,2}$, but with respect to $v$ rather than $\tilde u$:
$$
  \tau_{K,3} = T_0 \wedge \inf\{t \in [0,T_0]: \exists x \in \IR\mbox{ with } \vert v(t,x)\vert \geq K(1 + \vert x \vert) \} .
$$
As with the stopping time $\tau_{K,2}$, since we are assuming that our initial function $u_0(x)$ is bounded, it is a consequence of Lemma \ref{lem6.8} (taking $\phi(r,z) \equiv 1$ in \eqref{eq:n-def-1} and $\phi_1=1$ in \eqref{eq:n-def-1a}) below that
\begin{equation*}
\lim_{K\to\infty}P\left\{\tau_{K,3}<T_0\right\}=0.
\end{equation*}

\section{Local decomposition of the solution}
Fix
\begin{equation}\label{e4.1}
\alpha\in\,]\tfrac{1}{2},\tfrac{2}{3}[, \qquad  \beta\in\,]\alpha,\tfrac{2}{3}[. 
\end{equation}
Consider the rectangle $R_\rho(t_0,x_0)$ defined in \eqref{eqR0}. In order to study the behavior of $\tilde u$ in this rectangle, we are going to use a decomposition based on a time prior to $t_0 - \rho^4$, namely, we define
\begin{equation*}
t_0^-= t_0^-(\rho)=t_0 - \rho^4 -\rho^{4(1-\alpha)}
\end{equation*}
(notice that since $t_0 \geq 1$, $\rho\in \,]0,\tfrac{1}{2}]$ and $\alpha < \tfrac{2}{3}$, we have $t_0^- > \tfrac{1}{2}$). We also set
\begin{equation*}
   L_1 = L_1(\rho) = \rho^2 + \rho^{2(1-\beta)}.
\end{equation*}
The rectangle
\begin{equation*}
R^+=[t_0^-(\rho),t_0+\rho^4]\times[x_0-L_1,x_0 + L_1].
\end{equation*}
is an enlargement of $R_\rho(t_0,x_0)$. Note that for small $\rho >0$,
\begin{equation*}
\rho^{4(1-\alpha)}\ll \left[\rho^{2(1-\beta)}\right]^2;
\end{equation*}
the idea is that in the $x$-direction, $R^+$ is larger than parabolic scaling would indicate. Indeed, we would have exact parabolic scaling if $\beta$ were equal to $\alpha$.

\subsection{Isolating the dominant term}
We use the Markov property \cite[Chapter 9]{DZ} to start $\tilde{u}$ afresh at time $t_0^-$, so that
for $(t,x)\in R_\rho(t_0,x_0)$, we have
\begin{align*}
\tilde{u}(t,x)&= \tilde u_{t_0,x_0,\rho}(t,x) + N_{t_0,x_0,\rho}(t,x),
\end{align*}
where, for $t \geq t_0^-$ and $x\in \IR$,
\begin{align}\label{eq_utilde}
   \tilde u_{t_0,x_0,\rho}(t,x) &= \int_{-\infty}^{\infty}G(t-t_0^-,x-y)\tilde{u}(t_0^-,y)dy  \\ \nonumber
  N_{t_0,x_0,\rho}(t,x)  &=\int_{t_0^-}^{t}\int_{-\infty}^{\infty}
 G(t-s,x-y)\sigma(u(s\wedge\tau_{K,1},y))W(dy,ds).
\end{align}
Note that $\tilde u_{t_0,x_0,\rho}(t,x)$ is the solution of the heat equation started at time $t_0^-$ with initial function $\tilde u(t_0^-, \cdot)$.

We further decompose $N_{t_0,x_0,\rho}(t,x)$ as follows. Let
\begin{align}\nonumber
N^{(0)}(t,x)&=\int_{0}^{t}\int_{-\infty}^{\infty}G(t-s,x-y)W(dy,ds),  \\ \nonumber
N^{(1)}_{t_0,x_0,\rho}(t,x)&=\int_{t_0^-}^{t}\int_{x_0-L_1}^{x_0+L_1}G(t-s,x-y)
 \big[\sigma(u(s\wedge\tau_{K,1},y))-\sigma(u(t_0^-\wedge\tau_{K,1},x_0))\big]W(dy,ds),  \\ \nonumber
N^{(2)}_{t_0,x_0,\rho}(t,x)&=\int_{t_0^-}^{t}\int_{[x_0-L_1,x_0+L_1]^c}G(t-s,x-y)
 \big[\sigma(u(s\wedge\tau_{K,1},y))-\sigma(u(t_0^-\wedge\tau_{K,1},x_0))\big]W(dy,ds),   \\ \nonumber
v^{(1)}_{t_0,x_0,\rho}(t,x)&=\int_{0}^{t_0^-}\int_{-\infty}^{\infty}G(t-s,x-y)W(dy,ds)   \\
&=\int_{-\infty}^{\infty}G(t-t_0^-,x-y)N^{(0)}(t_0^-,y)dy.
\label{e4.3}
\end{align}
In the last line above, we have used semigroup property of $G$ and the stochastic Fubini theorem, see
\cite[Theorem 2.6]{walsh}; 
notice that the dependence of these processes on $K$ is omitted from the notation. Note also that $v_{t_0,x_0,\rho}^{(1)}$ is very similar to $\tilde u_{t_0,x_0,\rho}$.  We see that for $t \geq t_0^-$ and $x\in \IR$,
\begin{align*}
N_{t_0,x_0,\rho}(t,x)&=\sigma(u(t_0^-\wedge\tau_{K,1},0))N^{(0)}(t,x)+N_{t_0,x_0,\rho}^{(1)}(t,x)
 +N_{t_0,x_0,\rho}^{(2)}(t,x)    \\
&\qquad -\sigma(u(t_0^-\wedge\tau_{K,1},0))v_{t_0,x_0,\rho}^{(1)}(t,x).
\end{align*}
With the notation in \eqref{eq_utilde} and \eqref{e4.3} above, we have
\begin{align*}
\tilde{u}(t,x) &= \sigma(u(t_0^-\wedge\tau_{K,1},0))N^{(0)}(t,x) \\
   &\qquad + \tilde u_{t_0,x_0,\rho}(t,x) +N_{t_0,x_0,\rho}^{(1)}(t,x)
 +N_{t_0,x_0,\rho}^{(2)}(t,x) \\
  & \qquad  -\sigma(u(t_0^-\wedge\tau_{K,1},0))v_{t_0,x_0,\rho}^{(1)}(t,x).
\end{align*}

We also impose some growth conditions on $x\mapsto \tilde u_{t_0,x_0,\rho}(t,x)$ and $x \mapsto v_{t_0,x_0,\rho}^{(1)}(t,x)$ by defining, for $t \geq t_0^-$ and $x\in\IR$,
\begin{align}\nonumber
\hat {u}_{t_0,x_0,\rho}(t,x)&=
\begin{cases}
\tilde u_{t_0,x_0,\rho}(t,x) &\text{if }t_0^-\leq\tau_{K,2},  \\
0 &\text{if } t_0^->\tau_{K,2},
\end{cases}  \\
\hat{v}_{t_0,x_0,\rho}^{(1)}(t,x)&=
\begin{cases}
v^{(1)}_{t_0,x_0,\rho}(t,x) &\text{if }t_0^-\leq\tau_{K,3},  \\
0 &\text{if }t_0^->\tau_{K,3}.
\end{cases}
\label{e4.4}
\end{align}
Finally, for $t \geq t_0^-$ and $x\in \IR$, we define a new process $(w_{t_0,x_0,\rho}(t,x))$, which is related to the solution $u(t,x)$ but with frozen coefficients and controlled growth, by
\begin{equation}\label{eq:4.2}
   w_{t_0,x_0,\rho}(t,x) = \sigma(u(t_0^-\wedge\tau_{K,1},0))N^{(0)}(t,x) + E_{t_0,x_0,\rho}(t,x),
\end{equation}
where
\begin{align}\label{eq:4.2a}
  E_{t_0,x_0,\rho}(t,x) &= N^{(1)}_{t_0,x_0,\rho}(t,x) +N^{(2)}_{t_0,x_0,\rho}(t,x)
 -\sigma(u(t_0^-\wedge\tau_{K,1},0))\hat v^{(1)}_{t_0,x_0,\rho}(t,x) + \hat u_{t_0,x_0,\rho}(t,x).
\end{align}
Observe that if $\tau_{K,2}\wedge\tau_{K,3}=T_0$, then $\hat u_{t_0,x_0,\rho}\equiv\tilde{u}_{t_0,x_0,\rho}$ and
$\hat{v}_{t_0,x_0,\rho}^{(1)} \equiv v^{(1)}_{t_0,x_0,\rho}(t,x)$.  Thus, if $\tau_{K,2}\wedge\tau_{K,3}=T_0$, then  for $t \geq t_0^-$ and $x \in\IR$,
\begin{align}\label{eq:4.3}
\tilde{u}(t,x)&= w_{t_0,x_0,\rho}(t,x).
\end{align}

We wish to show that the oscillation of $\tilde u$ on the rectangle $R_\rho(t_0,x_0)$ is comparable to the oscillation of $N^{(0)}$ on $R_\rho(t_0,x_0)$. By \eqref{eq:4.3}, it suffices to study the oscillation of $w_{t_0,x_0,\rho}$ on $R_\rho(t_0,x_0)$. The oscillation of $w_{t_0,x_0,\rho}$ on $R_{\rho}(t_0,x_0)$ consists of those of $N^{(0)}$ and $E_{t_0,x_0,\rho}$. The oscillation of $E_{t_0,x_0,\rho}$ comes from those of $N^{(1)}$, $N^{(2)}$, $\hat v^{(1)}$ and $\hat u$.  Roughly speaking, the term in the square brackets in the definitions of $N^{(1)}$ is small, and so the oscillation of $N^{(1)}$ is small compared to the oscillation of $N^{(0)}$.  Also, in the definition of $N^{(2)}$, the heat kernel $G$ is small on the region of integration. The oscillations of $\hat v^{(1)}$ and $\hat u$ are small because $t_0^-$ is chosen far enough in the past of $t_0$ so that the heat kernel has the time to smooth the initial condition at time $t_0^-$, thanks to the growth bound $1+ \vert x \vert$ related to the stopping times $\tau_{K,2}$ and $\tau_{K,3}$. So altogether, we will see that $N^{(0)}$ is the term with dominant oscillation, and since it is Gaussian, we have precise estimates for it (see Proposition \ref{prop10.3}).
\vskip 12pt

\subsection{Oscillations of $N^{(0)}(t,x)$, $N^{(1)}_{t_0,x_0,\rho}(t,x)$ and $N^{(2)}_{t_0,x_0,\rho}(t,x)$}
\vskip 12pt


The random field $N^{(0)}(t,x)$ is a Gaussian process whose canonical metric is bounded by the metric $\Delta$ defined at the beginning of Section \ref{sec3}.  Thus Talagrand's analysis \cite{T1,T2} will apply to this case. To obtain a modulus of continuity for $N^{(0)}(t,x)$, we can use Corollary 
\ref{lem:osc-bound-rectangle} below with $\phi\equiv 1$, $\phi_1=1$, $S_0 = 0$, $S_1 = t_0 - \rho^4$, $T= t_0+\rho^4$, to obtain constants $C_0$ and $C_1$ such that for all $\lambda >0$,
\begin{equation*}
P\left(\text{osc}_{R_\rho(t_0,x_0)} (N^{(0)})>\lambda \rho\right)
 \leq C_0\exp\left(-C_1\lambda^2\right).
\end{equation*}
\vskip 12pt

\noindent{\em Oscillations of $N^{(1)}_{t_0,x_0,\rho}(t,x)$}
\vskip 12pt

\begin{lemma}\label{lem6.1}
Let $\beta$ be defined in \eqref{e4.1}. There are constants $C_0, C_1 \in \IR_+$ (which may depend on $K$ and the Lipschitz constant $L$) such that, for all $(t_0,x_0) \in [1,2] \times [0,1]$, $\rho \in \, ]0,\tfrac{1}{2}]$ and $\lambda >0$,
\begin{equation*}
P\left\{\text{\rm osc}_{R_\rho(t_0,x_0)}\left(N^{(1)}_{t_0,x_0,\rho}\right)>\lambda\right\}
\leq C_0\exp\left(-C_1\lambda^2\rho^{-2(1-\beta)(1-\delta)-2}\right).
\end{equation*}
\end{lemma}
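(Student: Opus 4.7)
The plan is to reduce Lemma \ref{lem6.1} to the same Gaussian-concentration oscillation estimate (Corollary \ref{lem:osc-bound-rectangle}) that was just used to bound $\text{osc}_{R_\rho}(N^{(0)})$, after showing that the bracketed integrand in $N^{(1)}_{t_0,x_0,\rho}$ is uniformly bounded by a suitable power of $\rho$ on the integration region. The payoff is that a scaling by this power converts the Gaussian-type tail $\exp(-C \lambda^2)$ into $\exp(-C \lambda^2 \rho^{-2-2(1-\beta)(1-\delta)})$.

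First, I would estimate the bracketed integrand uniformly. Combining Assumption \ref{assump2.1}(a) with the pathwise H\"older bound \eqref{eq:3.4} attached to the stopping time $\tau_{K,1}$, one gets
\begin{equation*}
\bigl|\sigma(u(s\wedge\tau_{K,1},y))-\sigma(u(t_0^-\wedge\tau_{K,1},x_0))\bigr| \;\le\; L\,K\,\Delta(s-t_0^-,\, y-x_0)^{1-\delta}
\end{equation*}
for all $(s,y)$ in the integration region. On that region $s\in[t_0^-, t_0+\rho^4]$ and $y\in[x_0-L_1,x_0+L_1]$, so $|s-t_0^-|\le 2\rho^4+\rho^{4(1-\alpha)}$ and $|y-x_0|\le L_1=\rho^2+\rho^{2(1-\beta)}$. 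Since $\alpha<\beta<\tfrac{2}{3}$ and $\rho\in\,]0,\tfrac12]$, these give $|s-t_0^-|^{1/4}\le c\,\rho^{1-\alpha}$ and $|y-x_0|^{1/2}\le c\,\rho^{1-\beta}$, and because $1-\beta<1-\alpha$ we have $\rho^{1-\beta}\ge\rho^{1-\alpha}$, so $\Delta(s-t_0^-,y-x_0)^{1-\delta}\le c_0\,\rho^{(1-\beta)(1-\delta)}$. Hence the bracketed integrand is bounded in absolute value by $\phi_1:=c_1\,\rho^{(1-\beta)(1-\delta)}$ with $c_1$ depending only on $K$ and $L$.

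Next, I would apply Corollary \ref{lem:osc-bound-rectangle} to the stochastic integral $N^{(1)}_{t_0,x_0,\rho}$, with the adapted integrand
\begin{equation*}
\phi(s,y)\;:=\;\bigl[\sigma(u(s\wedge\tau_{K,1},y))-\sigma(u(t_0^-\wedge\tau_{K,1},x_0))\bigr]\,\mathbf{1}_{[x_0-L_1,\,x_0+L_1]}(y),
\end{equation*}
with time interval $[S_1,T]=[t_0^-,t_0+\rho^4]$, and with the $L^\infty$-bound $\phi_1$ from the previous step. Exactly as for $N^{(0)}$ (case $\phi\equiv 1$, $\phi_1=1$, which produced a bound of the form $P(\text{osc}\,N^{(0)}>\mu\rho)\le C_0\exp(-C_1\mu^2)$), the corollary yields
\begin{equation*}
P\bigl\{\text{osc}_{R_\rho(t_0,x_0)}(N^{(1)}_{t_0,x_0,\rho})>\mu\,\rho\,\phi_1\bigr\} \;\le\; C_0\exp(-C_1\mu^2).
\end{equation*}
Setting $\mu:=\lambda/(\rho\phi_1)$ and substituting $\phi_1\le c_1\rho^{(1-\beta)(1-\delta)}$ gives the claimed tail bound, with constants depending on $K$ and $L$ through $c_1$.

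The main obstacle is simply checking that Corollary \ref{lem:osc-bound-rectangle} applies with an \emph{adapted} (rather than deterministic) integrand and produces the factor $\phi_1$ linearly in the threshold; this is the content of that corollary (obtained by BDG and chaining, since the quadratic variation of the heat-kernel stochastic integral is pathwise bounded by $\phi_1^2$ times the deterministic variance of the $N^{(0)}$ increments). The remaining bookkeeping is the choice of exponents so that the anisotropic enlargement encoded by $L_1=\rho^2+\rho^{2(1-\beta)}$ (rather than the parabolic $\rho^2$) sharpens the bound to $\rho^{1-\beta}$ in the spatial direction, which is precisely what produces the exponent $(1-\beta)(1-\delta)$ in the tail.
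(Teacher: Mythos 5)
Your overall approach is the same as the paper's: bound the bracketed integrand pathwise by $LK\rho^{(1-\beta)(1-\delta)}$ using the Lipschitz property of $\sigma$ together with the H\"older bound \eqref{eq:3.4}, then feed this into Corollary~\ref{lem:osc-bound-rectangle}. Your careful estimate that $\Delta(s-t_0^-,\,y-x_0)^{1-\delta}\leq c\,\rho^{(1-\beta)(1-\delta)}$ (using $\beta>\alpha$ to see that the spatial contribution dominates) is correct, and your explicit inclusion of the indicator $\mathbf{1}_{[x_0-L_1,x_0+L_1]}(y)$ in $\phi$ is a point the paper leaves implicit.

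However, there is a genuine error in the parameter choice for Corollary~\ref{lem:osc-bound-rectangle}: you set the time interval as $[S_1,T]=[t_0^-,t_0+\rho^4]$. Recall that in $N^{(4)}$ of \eqref{eq:n-def}, the role of $S_0$ is the starting time of the stochastic integral, while $S_1$ is the lower endpoint of the rectangle $R_1$ over which the oscillation is measured, and the corollary's bound scales as $(T-S_1)^{-1/2}$. Since $t_0^- = t_0-\rho^4-\rho^{4(1-\alpha)}$, your choice gives $T-S_1 = 2\rho^4+\rho^{4(1-\alpha)}\asymp\rho^{4(1-\alpha)}$ for small $\rho$, hence $(T-S_1)^{-1/2}\asymp\rho^{-2(1-\alpha)}$. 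Because $2(1-\alpha)<2$, the resulting tail $\exp\bigl(-C\lambda^2\phi_1^{-2}\rho^{-2(1-\alpha)}\bigr)$ is strictly weaker than the claimed $\exp\bigl(-C\lambda^2\phi_1^{-2}\rho^{-2}\bigr)$, and your intermediate statement ``the corollary yields $P\{\text{osc}>\mu\rho\phi_1\}\leq C_0\exp(-C_1\mu^2)$'' does not actually follow from your parameters (one only gets $\exp(-C_1\mu^2\rho^{2\alpha})$). The fix is to distinguish $S_0$ from $S_1$: take $S_0=t_0^-$ (where the integral starts) but $S_1=t_0-\rho^4$, $T=t_0+\rho^4$ (the time span of $R_\rho(t_0,x_0)$), so that $T-S_1=2\rho^4$ and $(T-S_1)^{-1/2}\asymp\rho^{-2}$. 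With that correction, the rest of your argument goes through and gives the lemma with exponent $-2(1-\beta)(1-\delta)-2$ as stated.
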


\begin{proof}
Because $\sigma$ is Lipschitz with Lipschitz constant $L$ (by Assumption \ref{assump2.1}(a)), and since, for $(t,x) \in R_\rho(t_0,x_0)$, in the stochastic integral that defines $N^{(1)}_{t_0,x_0,\rho}(t,x)$, we have $t_0^-\leq s\leq t_0 + \rho^4$
and $|y- x_0|\leq L_1(\rho)$, we know from (\ref{eq:3.4}) that
\begin{align}
\label{eq:sigma-modulus}
\left|\sigma(u(s\wedge\tau_{K,1},y))-\sigma(u(t_0^-\wedge\tau_{K,1},x_0))\right|
&\leq L|u(s\wedge\tau_{K,1},y)-u(t_0^-\wedge\tau_{K,1},x_0)|  \\ \nonumber
&\leq LK\Delta(t_0+\rho^4-t_0^-,L_1(\rho))^{1-\delta}.   \\ \nonumber
&\leq LK \rho^{(1-\beta)(1-\delta)}.
\end{align}
It therefore follows from Corollary \ref{lem:osc-bound-rectangle} below, with
\begin{align*}
&\phi(r,z) = \sigma(u(r\wedge\tau_{K,1},z))-\sigma(u(t_0^-\wedge\tau_{K,1},x_0)),\qquad
\phi_1 = LK \rho^{(1-\beta)(1-\delta)}, \\
  & S_0 = t_0^-,\qquad S_1 = t_0 - \rho^4,\qquad T = t_0 + \rho^4,
\end{align*}
that for all $\lambda >0$,
\begin{equation*}
P\left\{\text{osc}_{R_\rho(t_0,x_0)} \big(N^{(1)}_{t_0,x_0,\rho}\big)
 >\lambda\right\}
\leq C_0\exp\left(-C_1 \lambda^2 \rho^{-2(1-\beta)(1-\delta)} (2\rho^4)^{-1/2} \right).
\end{equation*}
This proves the lemma.
\end{proof}

\noindent{\em Oscillations of $N^{(2)}_{t_0,x_0,\rho}(t,x)$}
\vskip 12pt

Recall that
\begin{equation*}
N^{(2)}_{t_0,x_0,\rho}(t,x)=\int_{t_0^-}^{t}\int_{[x_0-L_1,x_0+L_1]^c}G(t-s,x-y)
 \left[\sigma(u(s\wedge\tau_{K,1},y))
  -\sigma(u(t_0^-\wedge\tau_{K,1},x_0))\right]W(dy,ds).
\end{equation*}
Our goal for this subsection is to establish the following lemma.

\begin{lemma}\label{lem7.2}
Fix $\kappa>1$. There are constants $C_0, C_1 \in \IR_+$ (which may depend on $K$ and $\sigma_1$) such that, for all $(t_0,x_0) \in [1,2] \times [0,1]$, $\rho \in \, ]0,\tfrac{1}{2}]$ and $\lambda >0$,
\begin{equation}
\label{eq:bound-far-away}
P\left\{\sup_{(t,x)\in R_\rho(t_0,x_0)}|N^{(2)}_{t_0,x_0,\rho}(t,x)|>\lambda\right\}
\leq C_0\exp\left(-C_1\lambda^2\rho^{-2\kappa}\right)
\end{equation}
and
\begin{equation}
\label{eq:bound-oscillation-far-away}
P\left\{\text{\rm osc}_{R_\rho(t_0,x_0)} \left(N^{(2)}_{t_0,x_0,\rho}\right)>\lambda\right\}
\leq C_0\exp\left(-C_1\lambda^2\rho^{-2\kappa}\right),
\end{equation}
with possibly different constants $C_0$ and $C_1$.
\end{lemma}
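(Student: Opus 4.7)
The heart of the argument is that on the region of integration the heat kernel is \emph{super-polynomially} small in $\rho$. Indeed, for $(t,x)\in R_\rho(t_0,x_0)$ and $|y-x_0|>L_1 = \rho^2+\rho^{2(1-\beta)}$, one has $|x-y|\geq \rho^{2(1-\beta)}$, while for $s\in[t_0^-,t]$ we have $t-s\leq \rho^4+\rho^{4(1-\alpha)}\leq 2\rho^{4(1-\alpha)}$ (for small $\rho$). Since $\beta>\alpha$, this forces
\begin{equation*}
\frac{(x-y)^2}{t-s}\geq \tfrac{1}{2}\rho^{-4(\beta-\alpha)}\longrightarrow\infty\quad\text{as }\rho\downarrow 0,
\end{equation*}
so $G(t-s,x-y)\mathbf{1}_{|y-x_0|>L_1}\leq C\exp(-c_0\,\rho^{-4(\beta-\alpha)})$ uniformly on the relevant region.

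\textbf{Step 1 (moment bounds via BDG).} The integrand of $N^{(2)}_{t_0,x_0,\rho}$ factors as $G(t-s,x-y)\mathbf{1}_{|y-x_0|>L_1}\psi(s,y)$, where $\psi(s,y)=\sigma(u(s\wedge\tau_{K,1},y))-\sigma(u(t_0^-\wedge\tau_{K,1},x_0))$ is uniformly bounded by $2\sigma_1$ thanks to Assumption \ref{assump2.1}(b). By the Burkholder--Davis--Gundy inequality for Walsh stochastic integrals, for every $p\geq 2$,
\begin{equation*}
E\bigl[|N^{(2)}_{t_0,x_0,\rho}(t,x)|^{p}\bigr]\leq C_p(2\sigma_1)^{p}\Bigl(\int_{t_0^-}^{t}\!\!\int_{|y-x_0|>L_1}G(t-s,x-y)^2\,dy\,ds\Bigr)^{p/2}.
\end{equation*}
Using the preceding bound on $G$ together with $\int_0^{R}\frac{1}{\sqrt{r}}\,dr=O(\rho^{2(1-\alpha)})$ and the normalization of $G^2$, the inner integral is bounded by $C\exp(-c_0\,\rho^{-4(\beta-\alpha)}/2)$, which is majorized by $\rho^{2\kappa}$ for any prescribed $\kappa>1$ once $\rho$ is small.

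\textbf{Step 2 (increments).} To handle the oscillation, apply the same BDG estimate to $N^{(2)}(t,x)-N^{(2)}(t',x')$: the integrand is now $[G(t-s,x-y)\mathbf{1}_{s\leq t}-G(t'-s,x'-y)\mathbf{1}_{s\leq t'}]\mathbf{1}_{|y-x_0|>L_1}\psi(s,y)$, which is again dominated in absolute value by $4\sigma_1 G(\cdot,\cdot)\mathbf{1}_{|y-x_0|>L_1}$. The same Gaussian-tail argument shows that
\begin{equation*}
E\bigl[|N^{(2)}(t,x)-N^{(2)}(t',x')|^{p}\bigr]\leq C_p\,\rho^{p\kappa}\,\Delta((t,x);(t',x'))^{p\gamma}
\end{equation*}
for some $\gamma>0$, by first factoring out the super-polynomial smallness of $G$ and then estimating the remaining $L^2$-norm of the kernel difference using the standard $L^2$-modulus of continuity of the heat kernel (the latter produces only a fixed polynomial factor in $\Delta$ which is absorbed into $\rho^{2\kappa}$ after slightly decreasing $\kappa$ above the desired target).

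\textbf{Step 3 (chaining).} Combining Step 1 with a Chebyshev/Stirling-type summation of even moments yields the sub-Gaussian pointwise tail
\begin{equation*}
P\bigl(|N^{(2)}_{t_0,x_0,\rho}(t,x)|>\lambda\bigr)\leq C_0\exp(-C_1\lambda^2\rho^{-2\kappa}).
\end{equation*}
The increment bound in Step 2 plus a Kolmogorov/Garsia--Rodemich--Rumsey chaining argument (or equivalently an application of the general oscillation estimate in Corollary \ref{lem:osc-bound-rectangle}, with integrand truncated outside $[x_0-L_1,x_0+L_1]$) upgrades this to the supremum bound \eqref{eq:bound-far-away} and to the oscillation bound \eqref{eq:bound-oscillation-far-away}, the parameter $\kappa$ being preserved because the loss from chaining produces only a polynomial factor in $\rho^{-1}$ which is swallowed by the super-polynomial smallness from $G$.

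\textbf{Main obstacle.} The only delicate point is bookkeeping in Step 2: one must check that the $L^2$-norm of the \emph{difference} of the two heat-kernel indicator factors is still controlled by $\exp(-c\rho^{-4(\beta-\alpha)})$ times an innocuous polynomial in $\Delta((t,x);(t',x'))$. This requires splitting the increment into the ``$s$-range'' part (where $s$ lies between $t$ and $t'$) and the ``common'' part (where one differences the kernels), and on each piece invoking the uniform pointwise Gaussian smallness of $G$ on $\{(r,u):r\leq 2\rho^{4(1-\alpha)},\ |u|\geq\rho^{2(1-\beta)}\}$ before estimating the elementary $L^2$-modulus in $(t,x)$.
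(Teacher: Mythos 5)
Your argument is correct in outline but takes a genuinely different route from the paper's, and the paper's route is somewhat more elementary. The paper does not prove an improved modulus of continuity for $N^{(2)}_{t_0,x_0,\rho}$: after splitting $N^{(2)}=N^{(2a)}+N^{(2b)}$, it tiles $R_\rho(t_0,x_0)$ with $\asymp\rho^{-6(\kappa-1)}$ subrectangles of size $\rho^{4\kappa}\times\rho^{2\kappa}$; on each subrectangle it applies the \emph{generic} oscillation bound of Corollary \ref{lem:osc-bound-rectangle} (which does not use the off-diagonal smallness of $G$ at all, and whose time-width $\rho^{4\kappa}$ is precisely what produces the $\rho^{-2\kappa}$ in the exponent), and it controls the value at one distinguished point per subrectangle by a martingale/reflection-principle estimate that \emph{does} exploit the smallness of $G$. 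Thus the kernel decay enters only through the pointwise estimate, never through increments; the super-exponential gain there absorbs the combinatorial factor $\rho^{-6(\kappa-1)}$ from the union bound. Your plan instead pushes the kernel decay into the increment moment bounds via an interpolation (the restricted $L^2$-norm of the kernel difference is bounded both by $C\exp\left(-c\rho^{-4(\beta-\alpha)}\right)$, from the uniform smallness, and by $C\,\Delta\bigl((t,x),(t',x')\bigr)$, from Lemma \ref{lem:standard-Gaussian} after dropping the indicator; $\min(a,b)\leq\sqrt{ab}$ gives both factors at once) and then chains. That is a legitimate alternative, and the ``main obstacle'' you flag is indeed the crux of your route; it is doable as sketched above, but you have not actually carried it out, and the interpolation step is precisely what the paper's decomposition is designed to avoid.

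One inaccuracy worth flagging: the parenthetical in your Step 3, that the chaining can ``equivalently'' be done by applying Corollary \ref{lem:osc-bound-rectangle} to the truncated integrand, is not correct. That corollary applied to $R_\rho(t_0,x_0)$ (time-width $2\rho^4$) yields only $P\{\mathrm{osc}>\lambda\}\leq C_0\exp\left(-C_1\lambda^2\rho^{-2}\right)$, i.e.\ $\kappa=1$, because it is blind to the indicator $\mathbf{1}_{|y-x_0|>L_1}$: it sees only the bound $\phi_1$ and the time-width of the rectangle, not the decay of the kernel on the integration region. To obtain $\kappa>1$ one must either feed the kernel decay into increment estimates (your main route) or, as the paper does, apply the corollary to much smaller subrectangles of time-width $\rho^{4\kappa}$ and separately control a mesh of distinguished points using the kernel decay.
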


\begin{proof} Since the oscillation of a function is bounded by twice its absolute
maximum, we see that (\ref{eq:bound-far-away}) implies \eqref{eq:bound-oscillation-far-away}. So we now prove \eqref{eq:bound-far-away}.

First, we split up $N^{(2)}$ as follows:
\begin{align*}
N^{(2)}_{t_0,x_0,\rho}(t,x)&= N^{(2a)}_{t_0,x_0,\rho}(t,x)+N^{(2b)}_{t_0,x_0,\rho}(t,x),
\end{align*}
where
\begin{align*}
N^{(2a)}_{t_0,x_0,\rho}(t,x) &= \int_{t_0^-}^{t}\int_{[x_0-L_1,x_0+L_1]^c}G(t-s,x-y)\sigma(u(s\wedge\tau_{K,1},y))\, W(dy,ds), \\
N^{(2b)}_{t_0,x_0,\rho}(t,x) &= \int_{t_0^-}^{t}\int_{[x_0-L_1,x_0+L_1]^c}G(t-s,x-y)\sigma(u(t_0^-\wedge\tau_{K,1},0))\, W(dy,ds).
\end{align*}
It suffices to show (\ref{eq:bound-far-away}) for $N^{(2)}$ replaced by $N^{(2a)}$ and $N^{(2b)}$.  By Assumption \ref{assump2.1}(b), the factor $\sigma(\cdots)$ in both $N^{(2a)}$ and $N^{(2b)}$ is bounded by $\sigma_1$, and that is the only information about $\sigma$ that we will use in our proof.  So we will only deal with $N^{(2a)}$, since the proof for $N^{(2b)}$ is identical.

The intuition is the following.
\vskip 12pt

\textbf{Step 1:} Since $R_\rho(t_0,x_0)$ is far away from
$[t_0^-,t_0 + \rho^4]\times[x_0-L_1,x_0+L_1]^c$, we expect $G(t-s,x-y)$ to be small for
$(t,x)\in R_\rho(t_0,x_0)$ and $(s,y)\in [t_0^-,t]\times[x_0-L_1,x_0+L_1]^c$.  This will lead to
a small value of $N^{(2a)}_{t_0,x_0,\rho}(t,x)$ when $(t,x)$ is fixed, with high
probability.
\vskip 12pt

\textbf{Step 2:} To go further and show that the supremum of
$|N^{(2a)}_{t_0,x_0,\rho}(t,x)|$ over $(t,x)\in R_\rho(t_0,x_0)$ is small, we divide $R_\rho(t_0,x_0)$ into even smaller subrectangles.  Ignoring the helpful fact that the domain of integration is far away from each of these subrectangles, we simply take the size of each
subrectangle to be so small that, by Corollary \ref{lem:osc-bound-rectangle}, the oscillation of $N^{(2a)}_{t_0,x_0,\rho}(t,x)$ over such a subrectangle is small with high probability.
\vskip 12pt

We begin with Step 2, which is a bit easier. Divide $R_\rho(t_0,x_0)$ into a union of nonoverlapping
subrectangles of dimensions $\rho^{4\kappa}\times\rho^{2\kappa}$, where
$\kappa>1$ is fixed in the statement of the lemma.  Since $R_\rho(t_0,x_0)$ has dimensions
$\rho^4\times\rho^2$, the number of subrectangles required is bounded by
$C\rho^{-6(\kappa-1)}$.  On each of these subrectangles $R'$, Corollary
\ref{lem:osc-bound-rectangle} with $\phi(r,z) = \sigma(u(r\wedge\tau_{K,1},z))$ in \eqref{eq:n-def}, $\phi_1 = \sigma_1$, $S_0 = t_0^-$, $T-S_1 = 2 \rho^{4\kappa}$, tells us that for all $\lambda >0$,
\begin{equation*}
P\left\{\text{osc}_{R'}\left(N^{(2a)}_{t_0,x_0,\rho}\right)>\lambda/2\right\}
 \leq C_0\exp\left(-C_1\lambda^2\rho^{-2\kappa}\right),
\end{equation*}
where the constant $C_1$ incorporates the constant $\sigma_1$.

Let $A_1(\rho,\kappa,\lambda)$ be the event that the oscillation of
$N^{(2a)}_{t_0,x_0,\rho}$ over each of these rectangles is less than or equal to
$\lambda/2$.  Then for all $\lambda >0$ and $\rho \in \, ]0,\tfrac{1}{2}]$,
\begin{align}
\label{eq:bound-osc-small-rectangles}
P\big(A_1(\rho,\kappa,\lambda)^c\big)&\leq C\rho^{6(1-\kappa)}\cdot
 C_0\exp\left(-C_1\lambda^2\rho^{-2\kappa}\right)  \\
&\leq C_0\exp\left(-C_1\lambda^2\rho^{-2\kappa}\right),  \nonumber
\end{align}
where $C_0,C_1$ may vary from line to line.

Now we turn to Step 1.
Let $R'$ be one of these subrectangles, and let $p'=(t',x')\in R'$ be a distinguished point.  We
wish to estimate $P\{|N^{(2a)}_{t_0,x_0,\rho}(p')|>\lambda/2\}$.  For $r\in[t_0^-,t']$, let
\begin{equation*}
M_r=\int_{t_0^-}^{r}\int_{[x_0-L_1,x_0+L_1]^c}G(t'-s,x'-y)\sigma(u(s\wedge\tau_{K,1},y))\, W(dy,ds)
\end{equation*}
and note that $(M_r)$ is an $(\mathcal{F}_r)$-martingale with quadratic variation
\begin{align}\nonumber
\langle M\rangle_r
&=\int_{t_0^-}^{r}\int_{[x_0-L_1,x_0+L_1]^c}G^2(t'-s,x'-y)\sigma^2(u(s\wedge\tau_{K,1},y))dyds \\ \nonumber
&\leq\sigma_1^2\int_{t_0^-}^{t'}\int_{[x_0-L_1,x_0+L_1]^c}G^2(t'-s,x'-y)dyds \\
&= \sigma_1^2\int_{0}^{t'-t_0^-}\int_{[x_0-L_1,x_0+L_1]^c} \frac{1}{2\pi r} \exp\left[-\frac{(x'-y)^2}{r} \right].
\label{e4.12}
\end{align}
Note that for $a>0$, $r\mapsto \tfrac{1}{r} \exp(-a^2/r)$ is increasing on the interval $]0,a^2]$, then decreasing.
Let $\alpha$ and $\beta$ be defined as in \eqref{e4.1}. For $r \in [0,t'-t_0^-]$ and $y \in [x_0-L_1,x_0+L_1]^c$,
\begin{equation*}
r\leq t_0+\rho^4-t_0^-\leq C\rho^{4(1-\alpha)} \qquad\text{and}\qquad
|x'-y|\geq \rho^{2(1-\beta)}.
\end{equation*}
Since $\beta>\alpha$, we replace $r$ by $t'-t_0^-$ in \eqref{e4.12} to see that
$$
   \langle M\rangle_r \leq \frac{\sigma_1^2}{2\pi} \int_{[x_0-L_1,x_0+L_1]^c} dy\, \exp\left[- \frac{(x'-y)^2}{t'-t_0^-}\right]
   \leq \frac{\sigma_1^2}{2\pi} \int_{[x_0-L_1,x_0+L_1]^c} dy\, \exp\left[- \frac{(x'-y)^2}{C \rho^{4(1-\alpha)}}\right].
$$
This integral is a sum of two integrals, over $]-\infty,x_0 - L_1]$ and $[x_0+L_1,+\infty[$. Both are bounded above by
\begin{align*}
  \int_{\rho^{2(1-\beta)}} dz \, \exp\left[ -\frac{z^2}{C \rho^{4(1-\alpha)}}\right] &= \int_{\rho^{2(\alpha-\beta)}} du \, \rho^{2(1-\alpha)} \exp\left[ -\frac{u^2}{C}\right]
  \leq C \rho^{2(1-\alpha)} \exp\left[ -\rho^{-4(\alpha-\beta)}\right]\\
  &\leq C \exp\left[ -\rho^{-4(\alpha-\beta)}\right].
\end{align*}
Finally, we obtain
\begin{equation*}
\langle M\rangle_r\leq C_0\exp\left(-C_1\rho^{-4(\beta-\alpha)}\right),\qquad r \in [t_0^-,t'].
\end{equation*}
Since $\beta>\alpha$, this exponential is small for small $\rho$.

Thus, $M_r$ is a time-changed Brownian motion with time scale bounded by our
bound on $\langle M\rangle_r$, and by the reflection principle for Brownian
motion,
\begin{align}
\nonumber
P\{|N^{(2a)}_{t_0,x_0,\rho}(t',x')|>\lambda/2\}
&\leq P\left\{\sup_{t_0^-\leq r\leq t'}|M_r|>\lambda/2\right\}   \\
&\leq 2P\left\{\sup_{0\leq t\leq \langle M\rangle_{t'}}B_t>\lambda/2\right\}   \nonumber\\
&\leq C_0\exp\Big(-C_2\lambda^2\exp\left(C_1\rho^{-4(\beta-\alpha)}\right)\Big) . \label{eq:est-dist-pt}
\end{align}

Let $A_2(\rho,\kappa,\lambda)$ be the event that for each subrectangle $R'$ and
for each distinguished point $p'\in R'$, we have
$|N^{(2a)}_{t_0,x_0,\rho}(p')|<\lambda/2$.  By
(\ref{eq:est-dist-pt}),
\begin{equation*}
P(A_2(\rho,\kappa,\lambda)^c)
 \leq C_0\rho^{-6(\kappa-1)}
  \exp\Big(-C_2\lambda^2\exp\left(C_1\rho^{-4(\beta-\alpha)}\right)\Big).
\end{equation*}
Therefore, using \eqref{eq:bound-osc-small-rectangles},
\begin{align}
\label{eq:bound-n-2}
P\left\{\sup_{(t,x)\in R_\rho(t_0,x_0)}|N^{(2a)}_{t_0,x_0,\rho}(t,x)| > \lambda\right\}
 &\leq P(A_1(\rho,\kappa,\lambda)^c) + P(A_2(\rho,\kappa,\lambda)^c)  \\
&\leq
 C\rho^{-6(\kappa-1)}\cdot C_0\exp\left(-C_1\lambda^2\rho^{-2\kappa}\right)
 \nonumber\\
&\qquad + C_0\rho^{-6(\kappa-1)}
  \exp\Big(-C_2\lambda^2\exp\left(C_1\rho^{-4(\beta-\alpha)}\right)\Big)
 \nonumber\\
&\leq C_0\exp\left(-C_2\lambda^2\rho^{-2\kappa}\right);  \nonumber
\end{align}
here, we have allowed the constants $C_0$, $C_2$ to vary from line to line.
\end{proof}

\subsection{Oscillations of $\hat{u}_{t_0,x_0,\rho}(t,x)$ and $\hat{v}^{(1)}_{t_0,x_0,\rho}(t,x)$}
Let $\hat{u}_{t_0,x_0,\rho}(t,x)$ and $\hat{v}^{(1)}_{t_0,x_0,\rho}(t,x)$ be as defined in \eqref{e4.4}.

\begin{lemma}\label{lem8.1}
Let $\alpha$ be as defined in \eqref{e4.1}.
There exists a constant $C$ (which may depend on $K$ and $L$) such that,
for all $(t_0,x_0) \in [1,2] \times [0,1]$, a.s., for all $\rho \in \, ]0,\tfrac{1}{2}]$,
\begin{equation*}
\text{\rm osc}_{R_\rho(t_0,x_0)}\big(\hat{u}_{t_0,x_0,\rho}\big) \leq C\rho^{2\alpha}\qquad \mbox{and}\qquad
\text{\rm osc}_{R_\rho(t_0,x_0)}\big(\hat{v}^{(1)}_{t_0,x_0,\rho}\big) \leq C\rho^{2\alpha}.
\end{equation*}
\end{lemma}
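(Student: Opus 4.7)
The plan is to exploit the smoothing effect of the heat semigroup. Observe that, on the relevant events, both $\hat u_{t_0,x_0,\rho}(t,\cdot)$ and $\hat v^{(1)}_{t_0,x_0,\rho}(t,\cdot)$ are heat-kernel convolutions $G(s,\cdot)\ast f$ with $s=t-t_0^-\geq\eta:=\rho^{4(1-\alpha)}$, applied to an initial datum $f$ of at most linear growth. Since $\alpha>\tfrac{1}{2}$, we have $\eta\gg\rho^2$, so the semigroup spreads the initial data over a spatial scale strictly larger than the parabolic diameter of $R_\rho(t_0,x_0)$, which is of order $\rho$. This is the mechanism that produces the small factor $\rho^{2\alpha}$.

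First, I would dispose of the event $\{t_0^->\tau_{K,2}\}$ (resp.\ $\{t_0^->\tau_{K,3}\}$), on which $\hat u_{t_0,x_0,\rho}\equiv 0$ (resp.\ $\hat v^{(1)}_{t_0,x_0,\rho}\equiv 0$) by the definition \eqref{e4.4}, so the oscillation is trivially $0$. On the complementary event $\{t_0^-\leq\tau_{K,2}\}$, the definition of $\tau_{K,2}$ gives $|\tilde u(t_0^-,y)|\leq K(1+|y|)$ for all $y\in\IR$. Similarly, on $\{t_0^-\leq\tau_{K,3}\}$ we have $|v(t_0^-,y)|\leq K(1+|y|)$; using $N^{(0)}(t_0^-,y)=v(t_0^-,y)-\int G(t_0^-,y-z)u_0(z)\,dz$ together with Assumption \ref{assump2.1}(c) then yields $|N^{(0)}(t_0^-,y)|\leq (K+K_0)(1+|y|)$. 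Thus in both cases the initial datum $f$ driving the heat convolution has linear growth with some constant $\widetilde K\in\{K,K+K_0\}$.

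The bulk of the argument is then the following deterministic gradient estimate. For any measurable $f:\IR\to\IR^d$ with $|f(y)|\leq\widetilde K(1+|y|)$, set $F(t,x):=\int G(t-t_0^-,x-y)f(y)\,dy$. Differentiation under the integral, combined with the Gaussian moment identities $\int|z|^k G(s,z)\,dz\leq C_k s^{k/2}$ and the pointwise bound $|f(x-z)|\leq\widetilde K(1+|x|+|z|)$, yields, uniformly over $(t,x)\in R_\rho(t_0,x_0)$ (where $|x|\leq 2$ and $s:=t-t_0^-\geq\eta$),
\begin{equation*}
|\partial_x F(t,x)|\leq C\widetilde K\, s^{-1/2}\leq C\widetilde K\,\rho^{-2(1-\alpha)},\qquad |\partial_t F(t,x)|=|\partial_{xx}F(t,x)|\leq C\widetilde K\, s^{-1}\leq C\widetilde K\,\rho^{-4(1-\alpha)}.
\end{equation*}
Since $R_\rho(t_0,x_0)$ has $x$-diameter $2\rho^2$ and $t$-diameter $2\rho^4$, the mean value theorem then gives
\begin{equation*}
\text{osc}_{R_\rho(t_0,x_0)}(F)\leq 2\rho^2\sup|\partial_x F|+2\rho^4\sup|\partial_t F|\leq C\widetilde K\bigl(\rho^{2\alpha}+\rho^{4\alpha}\bigr)\leq C\widetilde K\rho^{2\alpha},
\end{equation*}
the last inequality using $\rho\leq\tfrac{1}{2}$ and $\alpha>0$. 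Specializing to $f=\tilde u(t_0^-,\cdot)$ and $f=N^{(0)}(t_0^-,\cdot)$ yields the two stated bounds. The only delicate point I expect is to carefully track the $(1+|x|+|z|)$ factor through the moment integrals, which produces an extra term controlled by $s^{-1/2}$ (rather than $s^{-1/2}$ alone); but since $|x|$ is bounded on $R_\rho(t_0,x_0)$ and the Gaussian moments are explicit, this is a routine computation with no essential obstacle.
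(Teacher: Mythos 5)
Your proof is correct and follows essentially the same route as the paper's: exploit the linear-growth bound on the data at time $t_0^-$ coming from $\tau_{K,2}$ (resp.\ $\tau_{K,3}$), bound $|\partial_x|\lesssim s^{-1/2}$ and $|\partial_t|\lesssim s^{-1}$ for the heat convolution using standard kernel derivative estimates, and combine with $s = t - t_0^- \geq \rho^{4(1-\alpha)}$ and the side lengths $\rho^2, \rho^4$ of $R_\rho$ to obtain $C\rho^{2\alpha}$. Your explicit handling of the trivial event $\{t_0^- > \tau_{K,2}\}$ and of the fact that $\tau_{K,3}$ controls $v$ rather than $N^{(0)}$ (so the constant becomes $K+K_0$ via Assumption \ref{assump2.1}(c)) is a small clarification the paper leaves implicit, but does not change the argument.
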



\begin{proof} The only difference between $\hat{u}_{t_0,x_0,\rho}(t,x)$ and $\hat{v}^{(1)}_{t_0,x_0,\rho}(t,x)$ is the initial data at $t=t_0^-$, which, in both cases, by the definitions of $\tau_{K,2}$ and $\tau_{K,3}$, have growth bounded by $K(1+|x|)$,
and this bound is all that we use in our proof.  Therefore we will only deal
with $\hat{u}_{t_0,x_0,\rho}(t,x)$, and leave $\hat{v}^{(1)}_{t_0,x_0,\rho}(t,x)$ to the reader.

As mentioned,
\begin{equation*}
|\hat{u}_{t_0,x_0,\rho}(t_0^-,x)|\leq K(1+|x|).
\end{equation*}
Recall from \eqref{e4.4} that for $(t,x) \in R_\rho(t_0,x_0)$,
\begin{equation*}
\hat{u}_{t_0,x_0,\rho}(t,x)
 = \int_{-\infty}^{\infty}G(t-t_0^-,x-z)\hat{u}_{t_0,x_0,\rho}(t_0^-,z)dz,
\end{equation*}
and so, using the inequalities $|z|+1\leq |x-z|+|x|+1\leq |x-z|+3$ since $|x| \leq 2$, and the standard inequality
\begin{equation*}
   \left| \frac{\partial G(t,x)}{\partial x} \right| \leq \frac{c}{\sqrt{t}} G(2t,x),
\end{equation*}
we obtain
\begin{align*}
\left|\frac{\partial\hat{u}_{t_0,x_0,\rho}(t,x)}{\partial x}\right|
&\leq C\int_{-\infty}^{\infty}
 (t-t_0^-)^{-1/2}\, G(2(t-t_0^-),x-z)\, (|z|+1)dz  \\
&\leq C\int_{-\infty}^{\infty}
  (t-t_0^-)^{-1/2}\, G(2(t-t_0^-),x-z)\, (|x-z|+3)dz  \\
&:= I_1+I_2,
\end{align*}
where
\begin{align*}
   I_1 &= C\int_{-\infty}^{\infty} |x-z|\, (t-t_0^-)^{-1/2}\, G(2(t-t_0^-),x-z)\, dz, \\
   I_2 &= 3C\, (t-t_0^-)^{-1/2}\int_{-\infty}^{\infty}  G(2(t-t_0^-),x-z)\, dz.
\end{align*}
Clearly, $I_2 = 3C (t-t_0^-)^{-1/2}$ and by change of variables, $I_1 \leq C'$, therefore, since $t-t_0^- \leq 1$,
\begin{equation}
\label{eq:w-deriv-x-est}
\left|\frac{\partial\hat{u}_{t_0,x_0,\rho}(t,x)}{\partial x}\right| \leq C(t-t_0^-)^{-1/2}.
\end{equation}

Next, again using $|z|+1\leq|x-z|+3$ since $|x| \leq 2$, and the standard inequality
\begin{equation*}
\left|\frac{\partial G(t,x)}{\partial t}\right| \leq \frac{c}{t} G(2t,x),
\end{equation*}
we find that
\begin{align*}
\left|\frac{\partial\hat{u}_{t_0,x_0,\rho}(t,x)}{\partial t}\right|
&\leq C\int_{-\infty}^{\infty}
    \frac{1}{t-t_0^-}\, G(2(t-t_0^-),x-z)(|x-z|+3)dz  \\
&:= I_3+I_4,
\end{align*}
where
\begin{align*}
   I_3 &= C\int_{-\infty}^{\infty} \frac{|x-z|}{t-t_0^-}\, G(2(t-t_0^-),x-z)\, dz, \\
   I_4&= 3C\, \frac{1}{t-t_0^-}\int_{-\infty}^{\infty}  G(2(t-t_0^-),x-z)\, dz.
\end{align*}
Clearly, $I_4 = 3C (t-t_0^-)^{-1}$, and again, a change of variables gives us $I_3 \leq C (t-t_0^-)^{-1/2}$, so we conclude that
\begin{equation}
\label{eq:w-deriv-t-est}
\left|\frac{\partial\hat{u}_{t_0,x_0,\rho}(t,x)}{\partial t}\right| \leq C(t-t_0^-)^{-1}.
\end{equation}

Now, since $t-t_0^-\geq\rho^{4(1-\alpha)}$,
we can bound the oscillation of $\hat u_{t_0,x_0,\rho}$ over $R_\rho(t_0,x_0)$ as follows. First, consider oscillation in the $x$-direction.  Let $I$ be a line segment contained in $R_\rho(t_0,x_0)$, consisting of points $(t,x)$ with $t$ fixed. The rectangle $R_\rho(t_0,x_0)$ has width $\rho^2$, so using (\ref{eq:w-deriv-x-est}), we get
\begin{equation*}
\text{osc}_I\big(\hat{u}_{t_0,x_0,\rho}\big)
\leq C\rho^2(t-t_0^-)^{-1/2}
\leq C\rho^2\left(\rho^{4(1-\alpha)}\right)^{-1/2}
= C\rho^{2\alpha}.
\end{equation*}
Second, consider oscillation in the $t$-direction.  Let $J$ be a line segment contained in $R_\rho(t_0,x_0)$, consisting of points $(t,x)$ with $x$ fixed. The rectangle $R_\rho(t_0,x_0)$ has height $\rho^4$, so using (\ref{eq:w-deriv-t-est}), we get
\begin{equation*}
\text{osc}_J\big(\hat{u}_{t_0,x_0,\rho}\big)
\leq C\rho^4(t-t_0^-)^{-1}
\leq C\rho^4\left(\rho^{4(1-\alpha)}\right)^{-1}
\leq C\rho^{4\alpha}.
\end{equation*}
Putting together these estimates establishes the conclusion of the lemma for $\hat u_{t_0,x_0,\rho}$.
\end{proof}



\section{Existence of rectangles with small oscillations}

For integers $q \geq 1$ and $\ell \geq 0$, set
\begin{equation*}
   \rpl = 2^{-q} q^{-\ell}  \qquad\mbox{and}\qquad \lsp = \left\lfloor\frac{q}{\log_2 q}\right\rfloor
\end{equation*}
(where $\log_2$ is the base $2$ logarithm), so that $r_{q,0} = 2^{-q}$ and $r_{q,\lsp} \geq 2^{-2q}$ (and is of the same order as $2^{-2q}$). Define
\begin{equation}\label{eq.fr}
   f(r) = r\left(\log_2\log_2 \tfrac{1}{r}\right)^{-1/6}.
\end{equation}
In \cite[Prop.2.3]{DMX}, we established a result for Gaussian random fields satisfying \cite[Assump 2.1]{DMX}. In \cite[Sect.7]{DMX}, we showed that this assumption was satisfied for systems of linear stochastic heat equations with i.i.d.~coefficients and vanishing initial condition (these last two assumptions are removed in \cite{DKLMX}). Since $N^{(0)}$ is the solution of such a system of linear stochastic heat equations, we restate here the result of \cite[Prop.2.3]{DMX} for $N^{(0)}$, in the form that we will need.

\begin{prop} There exist constants $\tilde K$ and $q_0$ with the following property: for all $q \geq q_0$ and for all $(t_0,x_0) \in [1,2]\times[0,1]$,
$$
  P\left\{\exists \ell \in \{0,\dots,\lsp\}: \mbox{\rm osc}_{R_{\rpl}(t_0,x_0)}(N^{(0)}) \leq \tilde K f(\rpl) \right\} \geq 1 - \exp\left(-\sqrt{q}\right).
$$
\label{prop10.3}
\end{prop}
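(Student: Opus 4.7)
The plan is to invoke \cite[Proposition 2.3]{DMX} applied to the Gaussian random field $N^{(0)}$. My proof essentially consists of two steps. First, I would verify that $N^{(0)}$ satisfies \cite[Assumption 2.1]{DMX}: it is a centered Gaussian random field on $]0,\infty[\, \times \IR$ with i.i.d.~$\IR^d$-valued components, each of which is the mild solution of the additive-noise linear stochastic heat equation with vanishing initial condition. By a standard Walsh-type computation, the canonical pseudo-metric
\[
 d_{N^{(0)}}((t,x),(s,y)) = \bigl(E[|N^{(0)}_1(t,x) - N^{(0)}_1(s,y)|^2]\bigr)^{1/2}
\]
is comparable from both sides to $\Delta(t-s,x-y)$, uniformly on bounded subsets of $]0,\infty[\, \times \IR$. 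This two-sided comparison, together with the spectral/Markov structural properties required by \cite[Assumption 2.1]{DMX}, is exactly what is checked in \cite[Section 7]{DMX} for systems of linear stochastic heat equations with i.i.d.~coefficients and vanishing initial condition. Since our $N^{(0)}$ matches this setup, no further verification is needed. Second, I would apply \cite[Prop.~2.3]{DMX} directly: specializing that result to the field $N^{(0)}$, with the scales $\rpl$ and the function $f$ defined as in \eqref{eq.fr}, yields the probability bound claimed here.

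For orientation, let me briefly recall the mechanism underlying the proof of \cite[Prop.~2.3]{DMX}. Writing $A_\ell = \{\text{osc}_{R_{\rpl}(t_0,x_0)}(N^{(0)}) \leq \tilde K f(\rpl)\}$, the goal is to show $P(\bigcup_{\ell=0}^{\lsp} A_\ell) \geq 1 - \exp(-\sqrt q)$. The two ingredients are: (a) a Gaussian small-ball estimate at a single scale, which uses the fact that $N^{(0)}|_{R_r(t_0,x_0)}$ has canonical diameter of order $r$ to yield
\[
 P(A_\ell) \geq \exp\bigl(-c\, (\log_2 \log_2 \tfrac{1}{\rpl})^{1/3}\bigr),
\]
the exponent $1/3$ being twice the $1/6$ in the definition of $f$; and (b) an approximate independence of the $A_\ell$'s across well-separated scales, obtained by decomposing $N^{(0)}$ on $R_{\rpl}$ into a main piece measurable with respect to the white noise on a thin parabolic slab around $t_0$ and a negligible remainder. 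After passing to a suitable subfamily of indices, combining (a) and (b) gives a product-type bound
\[
 P(\textstyle\bigcap_{\ell} A_\ell^c) \leq \exp\bigl(-c\, \lsp \cdot \exp(-c'(\log q)^{1/3})\bigr).
\]

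Since $\lsp = \lfloor q/\log_2 q \rfloor$ grows like $q/\log q$ while $\exp(-c'(\log q)^{1/3})$ decays slower than any negative power of $q$, the right-hand side is bounded above by $\exp(-\sqrt q)$ once $q \geq q_0$ is large enough, yielding the proposition. The hardest part (already carried out in \cite{DMX}) is quantifying the approximate-independence step (b), which requires a careful Markovian decomposition of the stochastic heat equation across parabolic scales. In the present paper, I would treat this as a black box: since $N^{(0)}$ is identically distributed to the Gaussian field analyzed in \cite[Section 7]{DMX}, the conclusion transfers without any additional work.
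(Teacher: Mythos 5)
Your proposal is correct and takes essentially the same approach as the paper: both cite \cite[Section 7]{DMX} to verify \cite[Assumption 2.1]{DMX} for $N^{(0)}$ and then invoke \cite[Prop.\ 2.3]{DMX} as a black box, the paper's proof being mostly a notational dictionary ($r_0 = 2^{-q}$, $U = q^{1/2}$, $\ell_0 = \lsp$ in the notation of \cite{DMX}). One small remark: your sketched small-ball bound $P(A_\ell) \geq \exp\bigl(-c\,(\log_2\log_2\tfrac{1}{\rpl})^{1/3}\bigr)$ is off --- the relevant metric-entropy exponent is $6$, so paired with the $1/6$ in $f$ the power should be $1$, i.e.\ $P(A_\ell) \geq \exp\bigl(-c\,\log_2\log_2\tfrac{1}{\rpl}\bigr)$ --- but since you treat \cite[Prop.\ 2.3]{DMX} as a black box this has no bearing on the argument.
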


\begin{proof} This proposition is essentially equivalent to \cite[Prop.2.3]{DMX} (applied to $N^{(0)}$), but since the notation is different, we explain how Proposition \ref{prop10.3} is obtained from the proof of \cite[Prop 2.3]{DMX}.

In the proof of \cite[Prop 2.3]{DMX}, we considered a sequence $r_\ell = r_0 U^{-2\ell}$, we set $\ell_0 = \lfloor \frac{\log_2 (1/r_0)}{2\log_2 U} \rfloor$ and we showed that
$$
   P\left\{\exists 1 \leq \ell \leq \ell_0: \mbox{\rm osc}_{R_{r_\ell}(t_0,x_0)}(N^{(0)}) \leq K_2 f(r_\ell) \right\} \geq 1 - \exp\left[ - \left(\log_2\frac{1}{r_0} \right)^{1/2}\right].
$$
Towards the end of the proof, $U$ was chosen by setting
$$
   U= \left(\log_2\frac{1}{r_0} \right)^{1/(2\beta)},
$$
where this $\beta$, defined in the proof using the H\"older exponents of the Gaussian random field, takes the value $\beta = 1$ in the case of $N^{(0)}$.

Here, we take $r_0$ of the form $r_0 = 2^{-q}$, so $U = q^{1/2}$, and the $r_\ell$, which now depend on $q$ and which we denote $\rpl$, take the value
$
   \rpl = 2^{-q} q^{-\ell},
$
and $\ell_0 = \lfloor \frac{q}{\log_2 q} \rfloor$, which we now denote $\ell_q$ in the statement of Proposition \ref{prop10.3}.
\end{proof}

In this section, we let $(t_0,x_0) \in R_0$, where
$$
   R_0 := [1,2] \times [0,1].
$$
We will establish the following theorem.

\begin{theorem} Consider the process $w_{t_0,x_0,r}$ defined in \eqref{eq:4.2} (with $\rho$ there replaced by $r$). Let $ \tilde K$ be the constant in Proposition \ref{prop10.3} and $f$ be the function defined in \eqref{eq.fr}. There is $\rho_0 \in \, ]0,\tfrac{1}{2}]$ with the following property. Given $0 < r_0 < \rho_0$, for all $(t_0,x_0) \in R_0$, we have
\begin{equation}\label{eq10.1}
   P\left\{\exists r \in [r_0^2,r_0]: \mbox{\rm osc}_{R_r(t_0,x_0)} (w_{t_0,x_0,r}) \leq 2 \sigma_1 \tilde K f(r) \right\} \geq 1 - 2 \exp\left[-\left(\log_2\frac{1}{r_0} \right)^{\frac12}\right]
\end{equation}
(we will only use this for $r_0$ of the form $2^{-q}$).
\label{thm10.2}
\end{theorem}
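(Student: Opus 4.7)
The plan is to show that the scale $r = r_{q,\ell^*}$ given by Proposition \ref{prop10.3} simultaneously controls both the Gaussian main term and the correction $E_{t_0,x_0,r}$ from \eqref{eq:4.2a}, so that the decomposition \eqref{eq:4.2}, together with $\vert\sigma(u(t_0^-\wedge\tau_{K,1},0))\vert \leq \sigma_1$ from Assumption \ref{assump2.1}(b), yields \eqref{eq10.1}. I would take $r_0 = 2^{-q}$, so that all scales $r_{q,0},\ldots,r_{q,\ell_q}$ lie inside $[r_0^2, r_0]$, and then work with two events: the event $A_q$ from Proposition \ref{prop10.3} on which some (random) $\ell^* \in \{0,\ldots,\ell_q\}$ satisfies $\mathrm{osc}_{R_{r_{q,\ell^*}}(t_0,x_0)}(N^{(0)}) \leq \tilde K f(r_{q,\ell^*})$, and the event $B_q$ on which $\mathrm{osc}_{R_{r_{q,\ell}}(t_0,x_0)}(E_{t_0,x_0,r_{q,\ell}}) \leq \sigma_1 \tilde K f(r_{q,\ell})$ holds for every $\ell \in \{0,\ldots,\ell_q\}$. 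On $A_q \cap B_q$, the required oscillation bound $2\sigma_1 \tilde K f(r_{q,\ell^*})$ follows immediately from \eqref{eq:4.2} at $r = r_{q,\ell^*}$, since the prefactor $\sigma(u(t_0^-\wedge\tau_{K,1},0))$ is constant in $(t,x)$ and only multiplies the $N^{(0)}$ oscillation by a factor at most $\sigma_1$.

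The main work will be to show $P(B_q) \geq 1 - \exp(-\sqrt{q})$ for $q$ sufficiently large. I would split the target $\sigma_1 \tilde K f(r)$ between the four summands of $E$ in \eqref{eq:4.2a}, budgeting $\sigma_1 \tilde K f(r)/4$ to each. For the two heat-smoothed summands $\hat u_{t_0,x_0,r}$ and $-\sigma(\cdots)\hat v^{(1)}_{t_0,x_0,r}$, Lemma \ref{lem8.1} provides an almost sure bound of the form $C r^{2\alpha}$; since $2\alpha>1$ by \eqref{e4.1}, the ratio $r^{2\alpha}/f(r) = r^{2\alpha-1}(\log_2\log_2(1/r))^{1/6}$ tends to $0$ as $r \to 0$, so choosing $\rho_0$ small enough makes these deterministic inequalities hold at every $r \in [r_0^2,r_0]$ whenever $r_0 < \rho_0$. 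For the stochastic summands $N^{(1)}_{t_0,x_0,r}$ and $N^{(2)}_{t_0,x_0,r}$, Lemma \ref{lem6.1} and Lemma \ref{lem7.2} (the latter applied with some fixed $\kappa > 1$) give failure probabilities of the form $C_0 \exp(-C_1(f(r))^2 r^{-2-2(1-\beta)(1-\delta)})$ and $C_0 \exp(-C_1(f(r))^2 r^{-2\kappa})$ at each scale $r$. Since $(f(r))^2 r^{-2}=(\log_2\log_2(1/r))^{-1/3}$ is only slowly varying, each exponent grows at least like $r^{-\eta}$ for some $\eta>0$; using $r \leq 2^{-q}$ this is at least of order $2^{cq}$, so a union bound over the at most $\ell_q + 1 \leq q$ scales gives $P(B_q^c) \leq C q \exp(-C' 2^{cq})$, which is negligible compared to $\exp(-\sqrt{q})$.

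Combining $P(A_q^c)+P(B_q^c) \leq 2\exp(-\sqrt{q}) = 2\exp(-(\log_2(1/r_0))^{1/2})$ then gives \eqref{eq10.1}. The main delicacy, in my view, lies in the deterministic estimate from Lemma \ref{lem8.1}: one must exploit $2\alpha > 1$ from \eqref{e4.1} to overcome the linear factor of $r$ in $f(r)$, and the slowly varying weight $(\log_2\log_2(1/r))^{-1/6}$ is precisely what forces $\rho_0$ to be taken small. The stochastic part is effectively ``for free'' from the earlier lemmas, since the exponential rates there decay doubly exponentially in $q$ and hence comfortably beat both the union bound over the $\ell_q+1$ scales and the target $\exp(-\sqrt{q})$.
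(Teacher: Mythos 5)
Your proposal is correct and follows essentially the same route as the paper: decompose $w_{t_0,x_0,r}$ into the frozen-coefficient Gaussian term plus $E_{t_0,x_0,r}$, invoke Proposition \ref{prop10.3} at the scales $r_{q,\ell}\in[r_0^2,r_0]$ for the $N^{(0)}$ oscillation, and then control $E$ uniformly over those scales by a four-way split using Lemmas \ref{lem6.1}, \ref{lem7.2}, and \ref{lem8.1} (exactly the content of the paper's Lemma \ref{lem10.4}, with your ``good'' event $B_q$ being the complement of the paper's ``bad'' event $B_q$). The only cosmetic differences are your choice of a generic $\kappa>1$ where the paper fixes $\kappa=2$, and phrasing the $\hat u$, $\hat v^{(1)}$ contribution as ``deterministic for $\rho_0$ small'' where the paper says $P(B_{q,\ell}^{(3)})=P(B_{q,\ell}^{(4)})=0$ for $q$ large; these are the same observation.
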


\begin{remark} The statement in this theorem should be compared with the statement for Gaussian processes in Proposition \ref{prop10.3}: notice the factor $2\sigma_1$ in front of $\tilde K$ and the factor $2$ in front of the exponential on the right-hand side.
\end{remark}

\begin{lemma} Let $E_{t_0,x_0,\rho}$ be as defined in \eqref{eq:4.2a}. There are constants $a >0$, $c_0 >0$ and $c_1>0$ such that, for all $(t_0,x_0) \in R_0$ and all sufficiently large $q$,
$$
   P\left\{\exists \ell \in \{0,\dots,\lsp\}: \mbox{\rm osc}_{R_{\rpl}(t_0,x_0)}(E_{t_0,x_0,\rpl}) \geq \sigma_1 \tilde K f(\rpl) \right\} \leq c_0 \exp\left(-c_1 2^{aq} \right).
$$
\label{lem10.4}
\end{lemma}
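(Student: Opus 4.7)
The plan is to use subadditivity of the oscillation to decompose, for each fixed $\ell\in\{0,\dots,\lsp\}$,
$$
  \text{osc}_{R_{\rpl}}(E_{t_0,x_0,\rpl}) \leq \text{osc}_{R_{\rpl}}(N^{(1)}_{t_0,x_0,\rpl}) + \text{osc}_{R_{\rpl}}(N^{(2)}_{t_0,x_0,\rpl}) + \sigma_1\,\text{osc}_{R_{\rpl}}(\hat v^{(1)}_{t_0,x_0,\rpl}) + \text{osc}_{R_{\rpl}}(\hat u_{t_0,x_0,\rpl}),
$$
using $|\sigma(\cdot)|\leq\sigma_1$ from Assumption \ref{assump2.1}(b), and to show that each of the four summands is bounded by $\sigma_1\tilde K f(\rpl)/4$ with probability at least $1-C_0\exp(-c\,2^{aq})$ for some $a>0$. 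A union bound over the four summands, and then over $\ell\in\{0,\dots,\lsp\}$, whose cardinality $\lsp+1 = O(q/\log_2 q)$ is dominated by the double-exponential, will yield the claim.

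For the two deterministic $\hat{}$-terms, I would invoke the almost-sure bound of Lemma \ref{lem8.1}, which gives $C\,\rpl^{2\alpha}$. Because $\alpha>\tfrac12$ by \eqref{e4.1} and $f(\rpl)$ differs from $\rpl$ only by the slowly varying factor $(\log_2\log_2 1/\rpl)^{-1/6}$, we have $\rpl^{2\alpha}/f(\rpl)\to 0$ as $q\to\infty$, uniformly in $\ell\leq\lsp$ (using $\rpl\leq 2^{-q}$). Hence, for all sufficiently large $q$, both $\hat u_{t_0,x_0,\rpl}$ and $\sigma_1\hat v^{(1)}_{t_0,x_0,\rpl}$ satisfy the required bound $\sigma_1\tilde K f(\rpl)/4$ almost surely, with no probability expended on this step.

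For the two stochastic terms, I would apply Lemma \ref{lem6.1} and Lemma \ref{lem7.2} (with $\kappa>1$ held fixed) with $\lambda=\sigma_1\tilde K f(\rpl)/4$. Using $f(\rpl)^2\geq c\,\rpl^2(\log_2\log_2 1/\rpl)^{-1/3}$, Lemma \ref{lem6.1} gives
$$
  P\bigl\{\text{osc}_{R_{\rpl}}(N^{(1)}_{t_0,x_0,\rpl})>\sigma_1\tilde K f(\rpl)/4\bigr\} \leq C_0\exp\left(-c\,\rpl^{-2(1-\beta)(1-\delta)}(\log_2\log_2\tfrac{1}{\rpl})^{-1/3}\right),
$$
and Lemma \ref{lem7.2} analogously yields an exponential bound with rate $c\,\rpl^{-2(\kappa-1)}(\log_2\log_2 1/\rpl)^{-1/3}$. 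Because $2^{-2q}\leq \rpl\leq 2^{-q}$ and $\log_2\log_2 1/\rpl = O(\log_2 q)$, each exponent dominates $c\,2^{aq}$ for any $a<\min\{2(1-\beta)(1-\delta),\,2(\kappa-1)\}$ once $q$ is large. Summing over $\ell\leq\lsp$ absorbs the polynomial factor and produces the desired bound.

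The only genuinely delicate point is the joint calibration of the parameters $\alpha,\beta,\delta,\kappa$: I need $2\alpha>1$, $(1-\beta)(1-\delta)>0$, and $\kappa>1$ simultaneously, so that the exponent $a$ above is strictly positive. All three conditions already hold under \eqref{e4.1} for any small enough $\delta$ and any $\kappa>1$, so no tension arises. The rest is bookkeeping; the substantive inputs are Lemmas \ref{lem6.1}, \ref{lem7.2}, and \ref{lem8.1}, and the main conceptual content sits in those lemmas rather than in this assembly.
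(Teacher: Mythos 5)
Your proposal is correct and follows essentially the same route as the paper: decompose $\text{osc}(E_{t_0,x_0,\rpl})$ by subadditivity into contributions from $N^{(1)}$, $N^{(2)}$, $\hat v^{(1)}$, $\hat u$, kill the last two deterministically via Lemma \ref{lem8.1} and $\alpha>\tfrac12$, bound the first two via Lemmas \ref{lem6.1} and \ref{lem7.2} with $\lambda=\sigma_1\tilde K f(\rpl)/4$, and union-bound over $\ell\leq\lsp$. The paper simply fixes $\kappa=2$ and states $a=2(1-\beta)(1-\delta)$, whereas you keep $\kappa$ general and take $a$ strictly below $\min\{2(1-\beta)(1-\delta),\,2(\kappa-1)\}$ to absorb the $(\log_2\log_2 1/\rpl)^{-1/3}$ factor and the polynomial count of $\ell$-values; both give the claimed double-exponential decay.
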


\begin{proof}
   Define the event
\begin{equation}\label{eq10.2}
   B_{q} = \left\{\exists \ell \in \{0,\dots,\lsp\}: \mbox{\rm osc}_{R_{\rpl}(t_0,x_0)}(E_{t_0,x_0,\rpl}) \geq \sigma_1 \tilde K f(\rpl) \right\}.
\end{equation}
Then
\begin{equation}\label{eq10.4}
B_{q} \subset \bigcup_{\ell=0}^{\lsp}\ \bigcup_{i=1}^4 B_{q,\ell}^{(i)},
\end{equation}
where
\begin{align*}
    B_{q,\ell}^{(1)} &= \left\{\mbox{\rm osc}_{R_{\rpl}(t_0,x_0)}(N^{(1)}_{t_0,x_0,\rpl}) \geq \tfrac{\sigma_1 \tilde K}{4} f(\rpl) \right\}, \\
    B_{q,\ell}^{(2)} &= \left\{\mbox{\rm osc}_{R_{\rpl}(t_0,x_0)}(N^{(2)}_{t_0,x_0,\rpl}) \geq \tfrac{\sigma_1 \tilde K}{4} f(\rpl) \right\}, \\
    B_{q,\ell}^{(3)} &= \left\{\mbox{\rm osc}_{R_{\rpl}(t_0,x_0)}(\hat v^{(1)}_{t_0,x_0,\rpl}) \geq \tfrac{ \tilde K}{4} f(\rpl) \right\}, \\
    B_{q,\ell}^{(4)} &= \left\{\mbox{\rm osc}_{R_{\rpl}(t_0,x_0)}(\hat u_{t_0,x_0,\rpl}) \geq \tfrac{\sigma_1 \tilde K}{4} f(\rpl) \right\}.
\end{align*}
By the definition of $\tau_{K,1}$ and Lemma \ref{lem6.1},
\begin{align*}
   P(B_{q,\ell}^{(1)}) &\leq C_0\exp\left[-C_1(\sigma_1 \tilde K f(\rpl))^2 \left(K\rpl^{-(1-\beta)(1-\delta)-1}\right)^2  \right]\\
   &= C_0 \exp\left[-C_1 \sigma_1^2 \tilde K^2 \frac{1}{(\log_2\log_2 \frac{1}{\rpl})^{2/6} } \tilde K^{-2} (\rpl)^{-2(1-\beta)(1-\delta)} \right].
\end{align*}
Therefore, for $a= 2(1-\beta)(1-\delta)$, we have
\begin{equation}\label{eq10.5}
   \sum_{\ell=0}^{\lsp} P(B_{q,\ell}^{(1)}) \leq c_0 \exp(-c_1 2^{a q}).
\end{equation}
We develop similar estimates for $B_{q,\ell}^{(2)}$, $B_{q,\ell}^{(3)}$ and $B_{q,\ell}^{(4)}$. According to \eqref{eq:bound-oscillation-far-away} in Lemma \ref{lem7.2},
$$
   P(B_{q,\ell}^{(2)}) \leq C_0 \exp\left[-C_1 \left(\tfrac{\sigma_1\tilde K}{4} f(\rpl)\right)^2 (\rpl)^{-2\kappa} \right].
$$
We take $\kappa = 2$, so that
\begin{equation}
   \sum_{\ell=0}^{\lsp} P(B_{q,\ell}^{(2)}) \leq C_0 \exp(-c_1 2^{2 q}).
\end{equation}

 For large $q$ and $\ell \in \{0,\dots,\lsp\}$, by Lemma \ref{lem8.1}, since $\alpha>\frac12$,
\begin{equation}
     P(B_{q,\ell}^{(3)}) = 0.
\end{equation}
Finally, for $B_{q,\ell}^{(4)}$, also by 
Lemma \ref{lem8.1}, for large $q$ and $\ell \in \{0,\dots,\lsp\}$,
\begin{equation}\label{eq10.8}
     P(B_{q,\ell}^{(4)}) = 0.
\end{equation}
Putting together \eqref{eq10.4}--\eqref{eq10.8} establishes Lemma \ref{lem10.4} with $a=2(1-\beta)(1-\delta)$.
\end{proof}


\noindent{\em Proof of Theorem \ref{thm10.2}.}  Define
$$
   A_{q,\ell} = \left\{\mbox{\rm osc}_{R_{\rpl}(t_0,x_0)}(N^{(0)}_{t_0,x_0,\rpl}) \leq \tilde K f(\rpl) \right\}.
$$
Consider the event $B_{q}$ defined in \eqref{eq10.2}. If we set $r_0 = 2^{-q}$, then the event in \eqref{eq10.1} contains the event
$$
   \left(\bigcup_{\ell = 0}^{\lsp} A_{q,\ell}\right) \setminus B_{q},
$$
whose probability, using Proposition \ref{prop10.3} and Lemma \ref{lem10.4}, is bounded below by
\begin{equation}
   1 - \exp(-\sqrt{q}) - P(B_{q}).
\end{equation}
From Lemma \ref{lem10.4}, we see that for $q$ large enough, this is bounded below by
$1 - 2\exp(-\sqrt{q})$, which proves the theorem.
\hfill $\Box$
\vskip 16pt

\section{Probability estimates via chaining}

Now we describe the chaining framework, which has been used in papers such as \cite{mueller1}. Let $R_1$ be a rectangle with side lengths no greater than $1$. By translating our coordinate system if necessary, we may assume that the
lower left hand corner of $R$ lies at the origin, so
\begin{equation*}
R_1=[0,a]\times[0,b],\qquad a,b \in [0,1].
\end{equation*}
We consider the grid
\begin{equation*}
\mathcal{G}_n=\{(k2^{-4n},\ell2^{-2n}):\ k,\ell \in \IN\}.
\end{equation*}
Note that the choice of exponents $-4n$, $-2n$ corresponds to the parabolic scaling: nearest neighbors in $\mathcal{G}_n$
have $\Delta$-distance $2^{-n}$.  Finally, let
\begin{equation*}
\mathcal{G}=\bigcup_{n=0}^\infty\mathcal{G}_n.
\end{equation*}
Also, we say that a closed rectangle $R$ is of type $n$ if each of the four edges
of $R$ is an interval whose endpoints are nearest neighbors in
$\mathcal{G}_n$. Two elements in a given rectangle of type $n$ are at most at $\Delta$-distance $2^{-n}$ of each other. We also say that a line segment (a step) is of type $n$ if its
endpoints are nearest neighbors in $\mathcal{G}_n$.  Finally, a path of type
$n$ is a path whose steps are line segments of type $n$.

\begin{lemma}
\label{lem:chaining}
Let $(\delta_n,\, n\in \IN)$ be a sequence of nonnegative numbers. Let $g:R_1\to\IR^d$ and suppose that for all nearest neighbor pairs
$p^{(1)}_n,p^{(2)}_n\in\mathcal{G}_n\cap R_1$, we have
\begin{equation*}
\big|g(p^{(1)}_n)-g(p^{(2)}_n)\big|\leq \delta_n, \qquad \text{for all } n\geq0.
\end{equation*}
If $p^{(1)},p^{(2)}\in\mathcal{G}\cap R_1$, then
\begin{equation*}
\big|g(p^{(1)})-g(p^{(2)})\big|\leq 40 \sum_{n=n_0}^{\infty}\delta_n,
\end{equation*}
where $n_0 = n_0(p^{(1)},p^{(2)})$ is the integer part of
$\log_2(1/\Delta(p^{(1)}-p^{(2)}))$ (so $n_0 \geq 0$).
\end{lemma}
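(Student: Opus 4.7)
The plan is to execute a telescoping chain argument joining $p^{(1)}$ and $p^{(2)}$ through grid approximations at successively coarser scales. I first observe that $\mathcal{G}_n \subset \mathcal{G}_{n+1}$, since $2^{-4n} = 16 \cdot 2^{-4(n+1)}$ and $2^{-2n} = 4 \cdot 2^{-2(n+1)}$; in particular both $p^{(1)}$ and $p^{(2)}$ lie in a common $\mathcal{G}_N$ for some large $N \geq n_0$. The case $p^{(1)} = p^{(2)}$ is trivial, so assume otherwise; then the definition of $n_0$ yields $\Delta(p^{(1)} - p^{(2)}) \leq 2^{-n_0}$, equivalently $|t^{(1)} - t^{(2)}| \leq 2^{-4n_0}$ and $|x^{(1)} - x^{(2)}| \leq 2^{-2n_0}$. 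Hence the two type-$n_0$ rectangles containing $p^{(1)}$ and $p^{(2)}$ either coincide or are (at most diagonally) adjacent, and in particular they share at least one common corner $q \in \mathcal{G}_{n_0}$.

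I then build two finite chains $(p_n^{(i)})_{n_0 \leq n \leq N}$ for $i = 1, 2$ by setting $p_N^{(i)} = p^{(i)}$, $p_{n_0}^{(1)} = p_{n_0}^{(2)} = q$, and choosing $p_n^{(i)}$ at each intermediate level to be a corner of the type-$n$ rectangle containing $p^{(i)}$. Both $p_n^{(i)}$ and $p_{n+1}^{(i)}$ then lie in $\mathcal{G}_{n+1}$ (by the nesting) and in a common type-$n$ rectangle, so their $t$- and $x$-coordinates differ by at most $2^{-4n}$ and $2^{-2n}$, i.e.\ by at most $16$ and $4$ nearest-neighbor steps in $\mathcal{G}_{n+1}$ respectively. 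A monotone axis-aligned path of type-$(n+1)$ steps then joins them in at most $16 + 4 = 20$ edges, each contributing no more than $\delta_{n+1}$ by hypothesis, so
\begin{equation*}
   |g(p_{n+1}^{(i)}) - g(p_n^{(i)})| \leq 20\, \delta_{n+1}.
\end{equation*}

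Telescoping along each chain will then give
\begin{equation*}
   |g(p^{(i)}) - g(q)| \leq \sum_{n = n_0}^{N-1} |g(p_{n+1}^{(i)}) - g(p_n^{(i)})| \leq 20 \sum_{n = n_0+1}^{N} \delta_n \leq 20 \sum_{n = n_0}^{\infty} \delta_n,
\end{equation*}
and combining the two cases $i = 1, 2$ via the triangle inequality, using $g(p_{n_0}^{(1)}) = g(p_{n_0}^{(2)}) = g(q)$, delivers the stated bound with the constant $40$.

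The only real obstacle is a piece of bookkeeping: ensuring that every nearest-neighbor pair used along the interpolating paths actually lies in $\mathcal{G}_n \cap R_1$, so that the hypothesis of the lemma applies to each edge. Because $p^{(1)}, p^{(2)} \in R_1$, one can always select the corner $q$, the intermediate corners $p_n^{(i)}$, and the short axis-aligned paths connecting $p_n^{(i)}$ to $p_{n+1}^{(i)}$ within $R_1$ (picking, at each level, the corner on the $R_1$-side of the type-$n$ rectangle), and this does not affect the constant $40$ on the right-hand side.
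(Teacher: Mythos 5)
Your argument is correct and follows essentially the same strategy as the paper's proof (which is split into Lemma \ref{lem:chaining2} for the path construction and then the triangle inequality): you identify the shared corner $q$ of the two type-$n_0$ rectangles, build two chains of intermediate corners $p_n^{(i)}$ at each scale, connect consecutive corners by at most $16+4=20$ nearest-neighbor steps of type $n+1$, and telescope to get $20$ per chain and $40$ total. Your indexing by an arbitrary $N$ with $p^{(1)},p^{(2)}\in\mathcal{G}_N$ rather than the paper's $n_1=\min\{n>n_0:p^{(1)}\in\mathcal{G}_n\}$ is a cosmetic difference, and you correctly flag the same bookkeeping point the paper defers to the reader, namely that the corners and connecting paths can be chosen inside $R_1$.
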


Lemma \ref{lem:chaining} follows from the triangle inequality and from the
following lemma. 

\begin{lemma}
\label{lem:chaining2}
Let $p^{(1)},p^{(2)}\in\mathcal{G}\cap R_1$, and let
$n_0$ be the integer part of $\log_2(1/\Delta(p^{(1)}-p^{(2)}))$.
Then we can connect $p^{(1)}$ and $p^{(2)}$ by a path
consisting of line segments (steps) satisfying the following two conditions:
\begin{enumerate}
\item[(i)] each line segment is of type $n$ for some $n\geq n_0$;
\item[(ii)] for all $n\geq n_0$, there are at most 40 steps of type $n$.
\end{enumerate}
\end{lemma}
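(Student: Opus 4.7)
The plan is a standard dyadic chaining adapted to the parabolic grid, with the constant $40$ left intentionally loose to absorb the ceiling factors that come from passing between scales. Since $p^{(1)}, p^{(2)} \in \mathcal{G} = \bigcup_n \mathcal{G}_n$ is an increasing union, I first choose $N \geq n_0$ so that both $p^{(i)} \in \mathcal{G}_N$. For each $n \geq n_0$ I introduce the floor-projection
\[
   \pi_n(t,x) := \bigl(2^{-4n}\lfloor 2^{4n}t\rfloor,\, 2^{-2n}\lfloor 2^{2n}x\rfloor\bigr),
\]
which maps $R_1$ into $\mathcal{G}_n \cap R_1$ and satisfies $0 \leq t - \pi_n(t,x)_1 < 2^{-4n}$ and $0 \leq x - \pi_n(t,x)_2 < 2^{-2n}$. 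Set $q_n^{(i)} := \pi_n(p^{(i)})$ for $n_0 \leq n \leq N$ and $i \in \{1,2\}$, and note $q_N^{(i)} = p^{(i)}$.

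The path I will exhibit is
\[
   p^{(1)} = q_N^{(1)} \to q_{N-1}^{(1)} \to \cdots \to q_{n_0}^{(1)} \to q_{n_0}^{(2)} \to q_{n_0+1}^{(2)} \to \cdots \to q_N^{(2)} = p^{(2)},
\]
where each descent link $q_{n+1}^{(i)} \to q_n^{(i)}$ (for $n_0 \leq n \leq N-1$) is realized by an axis-aligned broken line of steps of type $n+1$---both endpoints lie in $\mathcal{G}_{n+1}$ because $\mathcal{G}_n \subset \mathcal{G}_{n+1}$---and the central bridge $q_{n_0}^{(1)} \to q_{n_0}^{(2)}$ is realized by such a broken line of type $n_0$.

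To bound the number of steps per type, observe that for the descent link $q_{n+1}^{(i)} \to q_n^{(i)}$ we have $0 \leq t_{n+1}^{(i)} - t_n^{(i)} < 2^{-4n}$ and $0 \leq x_{n+1}^{(i)} - x_n^{(i)} < 2^{-2n}$, which in $\mathcal{G}_{n+1}$-units is a gap of at most $2^4 - 1 = 15$ in time and $2^2 - 1 = 3$ in space, hence at most $18$ type-$(n+1)$ steps per descent link. For the bridge, the triangle inequality combined with $\Delta(p^{(1)} - p^{(2)}) \leq 2^{-n_0}$ yields $|t_{n_0}^{(1)} - t_{n_0}^{(2)}| < 3 \cdot 2^{-4n_0}$ and $|x_{n_0}^{(1)} - x_{n_0}^{(2)}| < 3 \cdot 2^{-2n_0}$, hence at most $2 + 2 = 4$ type-$n_0$ steps. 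Summing contributions by type: for $n = n_0$ only the bridge contributes, giving at most $4$ steps; for each $n \geq n_0 + 1$ only the two descent chains contribute, giving at most $18 + 18 = 36$ steps. Both are well below $40$.

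The hard part is purely careful grid bookkeeping and the factors $2^4,\, 2^2$ between consecutive levels; there is no conceptual obstacle. One last (immediate) check is that every intermediate vertex of the broken lines stays in $R_1$: each axis-aligned connector remains in the bounding box of its two endpoints, and these endpoints lie in $R_1$.
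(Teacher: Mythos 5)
Your proof is correct and takes essentially the same approach as the paper's: descend level by level from the fine scale $N$ where both $p^{(i)}$ live down to scale $n_0$, using grid-aligned projections, and join the two chains at scale $n_0$. Your explicit floor-projection $\pi_n$ is a cleaner way to realize what the paper describes as ``choosing a corner'' of the type-$n$ rectangle containing $p^{(i)}$, and your short triangle-inequality bridge (at most $4$ type-$n_0$ steps) neatly replaces the paper's reader-verified assertion that the two type-$n_0$ rectangles share a common corner.
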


\begin{proof}
Item (i) is a requirement which enters into the proof of (ii).

Item (ii):  For $i=1,2$, let $R^{(n_0)}_i$ be the rectangle of type $n_0$ which
contains $p^{(i)}$.
First we claim that the rectangles $R^{(n_0)}_i$, $i=1,2$, are
either the same or they share a corner $q^{(1)}_{n_0} = q^{(2)}_{n_0}$ that also belongs to $R_1$.  We leave it to the reader to verify
this assertion, which is similar to the statement that if two real numbers $x$ and $y$ are such that $|x-y|\leq 1$, then either $x$ and $y$ lie in the same interval with integer endpoints, or they belong to two adjacent such intervals. 

In view of the above statement about rectangles
sharing a corner, we see that it is enough to show that 
we can connect any one of the corners of $R^{(n_0)}_1$ to $p^{(1)}$ using a path
with all corners in $\mathcal{G} \cap R_1$ and with at most 20 steps of type $n$ for each $n\geq n_0$.  Indeed, the same statement would hold for $i=2$, giving a path of $20+20=40$ steps of type $n$ altogether, using the shared corner as common starting point.

We can make a further reduction as follows.  
If $p^{(1)} \in \cG_{n_0}$, then it is one of the corners of $R^{(n_0)}_1$ and only two steps of type $n_0$ are needed to connect $p^{(1)}$ to $q_{n_0}$. Assume that $p^{(1)} \not\in \cG_{n_0}$ and that $n_1$ is the smallest integer $> n_0$ such that $p^{(1)} \in \cG_{n_1}$. For $n_0 < n < n_1$, let $R^{(n)}_1\subset R^{(n_0)}_1$ be a rectangle of type $n$ which contains
$p^{(1)}$.  Since this rectangle must intersect $R_1$, we can choose one of its
corners, denoted $q_n^{(1)}$, in $\mathcal{G}_{n} \cap R_1$.  We also require that $q_{n_0}^{(1)}$ is the shared corner mentioned above. For $n=n_1$, we let $q_{n_1}^{(1)} = p^{(1)}$.
It suffices to show that for $n_1 \geq n> n_0$, we can find a path
of type $n$ between $q_{n-1}^{(1)}$ and $q_{n}^{(1)}$, with at most 20 steps.
However, $R^{(n-1)}_1\cap\mathcal{G}_{n}$ consists of a $2^4\times2^2$
grid of points.  Given one of these points, and one of the corners of $R^{(n-1)}_1$, we can connect them by a path of type $n$ by taking at most $2^4=16$ steps of
type $n$ in the $t$-direction and at most $2^2=4$ steps of type $n$ in
the $x$-direction.  Altogether, this gives at most 20 steps of type $n$,
as we claimed.

Thus we have a path from $q_{n_0}^{(1)}$ to $q_{n_1}^{(1)}$ of the required
type.  To get the full path from $p^{(1)}$ to $p^{(2)}$, we put together the
two paths from $q^{(i)}_{n_0}$ to $p^{(i)}$, $i=1,2$, and we recall that
$q^{(1)}_{n_0} = q^{(2)}_{n_0}$.
\end{proof}

\noindent{\em Probability estimate for chaining}
\vskip 12pt

We use the notation of Lemmas \ref{lem:chaining} and \ref{lem:chaining2} .

\begin{lemma}
\label{lem:prob-est}
Let $n_{R_1}$ be the largest value of $n$ such that $\mathcal{G}_{n}\cap R_1$ is contained in a single rectangle of type $n$. For $n\geq n_{R_1}$, let $N(n)$ be the number of nearest neighbor pairs
in $\mathcal{G}_n\cap R_1$, so that for all $n \geq n_{R_1}$,
\begin{equation*}
N(n)\leq 2^{6n+1}+ 2^{4n} + 2^{2n}\leq 2^{6n+2}.
\end{equation*}
Let $(Y(t,x),\, (t,x) \in R_1)$ be an $\IR^d$-valued stochastic process. Let $(\delta_n,\, n\in \IN)\subset \IR_+$ and $(\varepsilon_n,\, n\in \IN)\subset \IR_+$ be two sequences of nonnegative numbers. Suppose that for all $n \geq n_{R_1}$ and for all nearest neighbor pairs $p^{(1)}_n,p^{(2)}_n\in\mathcal{G}_n\cap R_1$, we have
\begin{equation}
\label{eq:prob-est-epsilon-delta}
P\left\{\big|Y(p^{(1)}_n)-Y(p^{(2)}_n)\big|>\delta_n\right\} \leq \varepsilon_n.
\end{equation}
Let
\begin{equation}\label{eq9.2}
\varepsilon= 4\sum_{n=n_{R_1}}^{\infty}2^{6n}\varepsilon_n.
\end{equation}
Let $A$ be the event that for all $p^{(1)},p^{(2)}\in\mathcal{G}\cap R_1$, we have
\begin{equation*}
\big|Y(p^{(2)})-Y(p^{(1)})\big|\leq 40\sum_{n=n_0(p^{(1)},p^{(2)})}^{\infty}\delta_n,
\end{equation*}
where $n_0(p^{(1)},p^{(2)})$ $(\geq n_{R_1})$ is as in Lemma \ref{lem:chaining}.  Then
$
P(A^c)\leq\varepsilon.
$
\end{lemma}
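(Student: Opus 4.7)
The plan is a standard union-bound argument combined with the deterministic chaining estimate of Lemma \ref{lem:chaining}. For each $n \geq n_{R_1}$, I would define the ``good event at scale $n$''
\begin{equation*}
   A_n = \left\{ \bigl|Y(p^{(1)}_n) - Y(p^{(2)}_n)\bigr| \leq \delta_n \text{ for every nearest neighbor pair } p^{(1)}_n, p^{(2)}_n \in \mathcal{G}_n \cap R_1 \right\},
\end{equation*}
and set $A^* = \bigcap_{n \geq n_{R_1}} A_n$. The goal is to show $P((A^*)^c) \leq \varepsilon$ and $A^* \subset A$.

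The probabilistic step is to apply hypothesis \eqref{eq:prob-est-epsilon-delta} to each of the at most $N(n) \leq 2^{6n+2}$ nearest neighbor pairs, which by a union bound yields $P(A_n^c) \leq N(n)\varepsilon_n \leq 4 \cdot 2^{6n} \varepsilon_n$. A second union bound over $n \geq n_{R_1}$, combined with the definition \eqref{eq9.2} of $\varepsilon$, then gives
\begin{equation*}
   P((A^*)^c) \leq \sum_{n=n_{R_1}}^\infty 4 \cdot 2^{6n} \varepsilon_n = \varepsilon.
\end{equation*}

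For the deterministic step, I would argue that $A^* \subset A$. On $A^*$, the hypothesis of Lemma \ref{lem:chaining} is satisfied for every scale $n \geq n_{R_1}$. Given $p^{(1)}, p^{(2)} \in \mathcal{G} \cap R_1$, the chaining path built in Lemma \ref{lem:chaining2} uses only steps of type $n \geq n_0(p^{(1)},p^{(2)})$, and by the grid geometry underlying the definition of $n_{R_1}$ we have $n_0 \geq n_{R_1}$; hence the telescoping bound from Lemma \ref{lem:chaining} applies verbatim and produces
\begin{equation*}
   \bigl|Y(p^{(2)}) - Y(p^{(1)})\bigr| \leq 40 \sum_{n=n_0(p^{(1)},p^{(2)})}^\infty \delta_n
\end{equation*}
uniformly over $p^{(1)}, p^{(2)} \in \mathcal{G} \cap R_1$, so indeed $A^* \subset A$.

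The only point requiring real care is bookkeeping: verifying the count $N(n) \leq 2^{6n+2}$ (essentially sketched in the lemma statement itself, counting horizontal, vertical, and degenerate pairs in a grid of $2^{4n} \times 2^{2n}$ type) and matching the lower summation index $n_{R_1}$ in \eqref{eq9.2} to the smallest scale actually touched by any chaining path between points of $\mathcal{G} \cap R_1$. There is no probabilistic obstacle beyond one application of countable subadditivity.
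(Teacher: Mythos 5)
Your proof is correct and follows essentially the same approach as the paper: a union bound over scales $n \geq n_{R_1}$ and nearest-neighbor pairs yields the probability estimate, and the deterministic chaining bound from Lemma \ref{lem:chaining} gives the inclusion of the good event into $A$. Your $A_n$, $A^*$ correspond directly to the paper's $F_n$, $F$.
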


\begin{proof}
Let $F_n$ be the event that for all nearest neighbor pairs
$p_n^{(1)},p_n^{(2)}\in\mathcal{G}_n\cap R_1$, we have
$|Y(p_n^{(1)})-Y(p_n^{(2)})|\leq\delta_n$, and let
$F=\cap_{n=n_{R_1}}^\infty F_n$.  By assumption
\eqref{eq:prob-est-epsilon-delta}, we have
\begin{equation*}
P(F^c)\leq \sum_{n=n_{R_1}}^{\infty} N(n) \varepsilon_n \leq 4 \sum_{n=n_{R_1}}^{\infty} 2^{6n}\varepsilon_n=\varepsilon.
\end{equation*}

Next we claim that on the set $F$, for all points
$p^{(1)},p^{(2)}\in\mathcal{G}\cap R_1$, we have
\begin{equation}
\label{eq:nearest-neighbor-estimate}
|Y(p^{(2)})-Y(p^{(1)})|\leq 40 \sum_{n=n_0(p^{(1)},p^{(2)})}^{\infty}\delta_n.
\end{equation}
Indeed, this follows from Lemma \ref{lem:chaining} with $g=Y$.
Therefore $A^c \subset F^c$, and this finishes the proof of Lemma
\ref{lem:prob-est}.
\end{proof}


The following estimates are standard \cite{khosh1}.  Here, $T>0$ and $0\leq s<t\leq T$.
\begin{lemma}
\label{lem:standard-Gaussian}
There exists a constant $C>0$ such that for all $0<s<t$, $x,y \in \IR$,
\begin{align*}
&\int_{0}^{t}\int_{-\infty}^{\infty}\left[G(t-r,x-z)-G(t-r,y-z)\right]^2dzdr
 \leq C|x-y|,  \\
&\int_{0}^{s}\int_{-\infty}^{\infty}\left[G(t-r,x-z)-G(s-r,x-z)\right]^2dzdr
 \leq C|t-s|^{1/2},  \\
&\int_{s}^{t}\int_{-\infty}^{\infty}\left[G(t-r,x-z)\right]^2dzdr
 \leq C|t-s|^{1/2}.
\end{align*}
\end{lemma}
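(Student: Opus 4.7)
The plan is to reduce all three integrals to scalar Fourier expressions via Plancherel's theorem in the spatial variable. Recall that $G(t,\cdot)$ has Fourier transform $\hat{G}(t,\xi)=e^{-t\xi^{2}}$, so that $G(t-r,x-\cdot)$ has Fourier transform $\xi \mapsto e^{-i\xi x}e^{-(t-r)\xi^{2}}$. With this, every squared spatial integral collapses to a one-dimensional Fourier integral, and the remaining time integral is elementary.

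For the third bound, I would bypass Fourier and compute directly: completing the square gives
\begin{equation*}
\int_{\IR} G^{2}(t-r,x-z)\,dz = \frac{1}{4\pi(t-r)}\int_{\IR} e^{-(x-z)^{2}/(2(t-r))}\,dz = \frac{1}{\sqrt{8\pi(t-r)}},
\end{equation*}
and integrating in $r$ from $s$ to $t$ yields $C\,|t-s|^{1/2}$.

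For the first bound, Plancherel in $z$ gives
\begin{equation*}
\int_{\IR}\bigl[G(t-r,x-z)-G(t-r,y-z)\bigr]^{2}\,dz = \frac{1}{\pi}\int_{\IR}\bigl(1-\cos(\xi(x-y))\bigr)e^{-2(t-r)\xi^{2}}\,d\xi.
\end{equation*}
Integrating in $r$ over $[0,t]$ bounds the $r$-integral by $\tfrac{1}{2\xi^{2}}$, after which the classical identity $\int_{\IR}\xi^{-2}(1-\cos(\xi u))\,d\xi=\pi|u|$ delivers the bound $C\,|x-y|$. For the second bound, the same Plancherel step followed by the substitution $u=s-r$ gives
\begin{equation*}
\int_{0}^{s}\!\!\int_{\IR}\bigl[G(t-r,x-z)-G(s-r,x-z)\bigr]^{2}\,dz\,dr = \frac{1}{4\pi}\int_{\IR}\frac{(1-e^{-(t-s)\xi^{2}})^{2}(1-e^{-2s\xi^{2}})}{\xi^{2}}\,d\xi.
\end{equation*}
Dropping the factor $1-e^{-2s\xi^{2}}\leq 1$ and rescaling $\eta=\xi\sqrt{t-s}$ extracts the factor $|t-s|^{1/2}$ while leaving the absolutely convergent integral $\int_{\IR}\eta^{-2}(1-e^{-\eta^{2}})^{2}\,d\eta$ as a finite constant.

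There is no genuine obstacle here; these are textbook estimates and the only point requiring care is the convergence of the final $\xi$-integrals, which is controlled by the $\xi\to 0$ behaviour ($1-\cos(\xi u) = O(\xi^{2})$ and $(1-e^{-(t-s)\xi^{2}})^{2}=O(\xi^{4})$) and fast decay at $\xi\to\infty$. If one prefers to avoid Fourier analysis altogether, the first and second inequalities also follow from the semigroup identity $G(t,\cdot)\ast G(s,\cdot)=G(t+s,\cdot)$ applied to expand the squares, reducing everything to integrals of $G$ at shifted time arguments that can be evaluated in closed form; this route is a bit longer but entirely elementary.
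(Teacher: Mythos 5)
Your proof is correct, and all three computations check out (the constants in particular: the first bound comes with $C=1/2$ after using $\int_{\IR}\xi^{-2}(1-\cos(\xi u))\,d\xi=\pi|u|$, the second after dropping $1-e^{-2s\xi^2}\leq 1$ and rescaling, and the third by the direct Gaussian integral). The paper itself does not prove this lemma; it cites it as a standard fact from Khoshnevisan's minicourse notes, and your Plancherel argument (together with the elementary closed-form computation for the third integral) is the usual way these bounds are established, so you are on the same footing as the reference the paper invokes.
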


We also give a probability estimate which we will use together with Lemma
\ref{lem:standard-Gaussian}.
\begin{lemma}
\label{lem:time-changed-bm}
There exist constants $C_0,C_1>0$ such that the following holds.
Suppose that $p^{(i)}=(t_i,x_i)\in[0,\infty)\times\IR$ for $i=1,2$.
Let $\phi(t,x)$ be jointly measurable and $(\mathcal{F}_t)$-adapted ($\IR^{d\times d}$-valued), and assume that there is $\phi_1 \in \IR_+$ such that
\begin{equation*}
\sup_{t,x}|\phi(t,x)|\leq \phi_1, \qquad \mbox{a.s.}
\end{equation*}
Define
\begin{equation*}
X_i = \int_{0}^{t_i}\int_{-\infty}^{\infty}G(t_i-s,x_i-y)\phi(s,y)W(dy,ds).
\end{equation*}
Then for $\lambda>0$,
\begin{equation*}
P\{|X_1-X_2|>\lambda\}
 \leq C_0\exp\left(-C_1\lambda^2\phi_1^{-2}\Delta(p^{(1)}-p^{(2)})^{-2}\right).
\end{equation*}
\end{lemma}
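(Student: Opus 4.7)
The plan is to represent $X_1-X_2$ as a stochastic integral against the white noise $W$ of a predictable integrand bounded pointwise by $\phi_1$ times a deterministic kernel, bound its quadratic variation deterministically using Lemma \ref{lem:standard-Gaussian}, and then invoke a standard Gaussian tail bound for time-changed Brownian motion.

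Assuming without loss of generality that $t_1\leq t_2$, I would split $X_2-X_1 = Y_1+Y_2$, where
\begin{align*}
Y_1 &= \int_0^{t_1}\!\int_{-\infty}^\infty \bigl[G(t_2-s,x_2-y)-G(t_1-s,x_1-y)\bigr]\phi(s,y)\,W(dy,ds), \\
Y_2 &= \int_{t_1}^{t_2}\!\int_{-\infty}^\infty G(t_2-s,x_2-y)\phi(s,y)\,W(dy,ds).
\end{align*}
Each coordinate of $Y_1$ and $Y_2$ is the terminal value of a continuous $(\cF_r)$-martingale in the upper time-limit $r$. Using $|\phi|\leq\phi_1$, the quadratic variation of the $j$-th coordinate is bounded by $\phi_1^2$ times the $L^2$-integral of the squared kernel. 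For $Y_1$, I would apply $(a-b)^2\leq 2a^2+2b^2$ with $a=G(t_2-s,x_2-y)-G(t_1-s,x_2-y)$ and $b=G(t_1-s,x_2-y)-G(t_1-s,x_1-y)$, so that the first two bounds of Lemma \ref{lem:standard-Gaussian} control each piece by $C\phi_1^2(|t_2-t_1|^{1/2}+|x_2-x_1|)$. For $Y_2$ the third bound of Lemma \ref{lem:standard-Gaussian} gives $C\phi_1^2|t_2-t_1|^{1/2}$. Since both $|t_2-t_1|^{1/2}$ and $|x_2-x_1|$ are dominated by $\Delta(p^{(1)}-p^{(2)})^2$, the quadratic variation of each coordinate of $X_2-X_1$ is bounded almost surely by $V:=C\phi_1^2\Delta(p^{(1)}-p^{(2)})^2$.

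Finally, by the Dambis--Dubins--Schwarz theorem, the $j$-th coordinate of $X_2-X_1$ equals $B^{(j)}_{V_j}$ for some standard Brownian motion $B^{(j)}$ and a random time $V_j\leq V$ a.s. The reflection principle together with the standard Gaussian tail then yields
\[
   P\bigl(|(X_2-X_1)^{(j)}|>\lambda/\sqrt{d}\bigr) \leq P\bigl(\sup_{s\leq V}|B^{(j)}_s|>\lambda/\sqrt{d}\bigr) \leq C_0\exp\bigl(-C_1 \lambda^2 \phi_1^{-2}\Delta(p^{(1)}-p^{(2)})^{-2}\bigr),
\]
and a union bound over $j=1,\dots,d$ gives the asserted inequality (after adjusting $C_0,C_1$). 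The only delicate point is the triangle-inequality split inside $Y_1$ that isolates the time-shift and space-shift pieces so that the first two estimates of Lemma \ref{lem:standard-Gaussian} can be applied directly; everything else is routine bookkeeping.
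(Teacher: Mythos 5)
Your proposal is correct and follows essentially the same approach as the paper: the paper splits $X_2-X_1$ into three martingales rather than two (isolating the time-shift and space-shift pieces before, rather than inside, the quadratic variation), and reduces to $d=1$ at the outset rather than taking a coordinate-wise union bound at the end, but the key ingredients --- the three kernel estimates of Lemma~\ref{lem:standard-Gaussian} together with the reflection principle for the time-changed Brownian motion --- are used identically. The only trivial slip is that with your choice of $a$ and $b$ the kernel difference is $a+b$, not $a-b$, though the inequality $(a+b)^2\le 2a^2+2b^2$ of course still applies.
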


\begin{proof}
It is sufficient to prove this lemma in the case where $d=1$, so we assume that $d=1$ in the proof.
Note that by replacing $\phi$ by $\phi/\phi_1$ and $\lambda$ by
$\lambda/\phi_1$, we can reduce to the case $\phi_1=1$, so we assume from
now on that $\phi_1=1$ and so $\vert \phi(s,y)\vert \leq 1$.  By possibly changing indices, we may assume $t_1\leq t_2$.
Then
\begin{align*}
X_2-X_1
&= \int_{0}^{t_1}\int_{-\infty}^{\infty}
 \left[G(t_2-r,x_2-z)-G(t_1-r,x_2-z)\right]\phi(r,z)W(dz,dr)     \\
&\quad+ \int_{0}^{t_1}\int_{-\infty}^{\infty}
 \left[G(t_1-r,x_2-z)-G(t_1-r,x_1-z)\right]\phi(r,z)W(dz,dr)     \\
&\quad+ \int_{t_1}^{t_2}\int_{-\infty}^{\infty}G(t_2-r,x_2-z)\phi(r,z)W(dz,dr) \\
&=:  I_1+I_2+I_3.
\end{align*}
We analyze term $I_1$, leaving $I_2$ and $I_3$ to the reader using very similar
arguments.  For $0\leq t\leq t_1$, let
\begin{equation*}
M_t=\int_{0}^{t}\int_{-\infty}^{\infty}
 \left[G(t_2-r,x_2-z)-G(t_1-r,x_2-z)\right]\phi(r,z)W(dz,dr)
\end{equation*}
We note that $M_t$ is an $(\mathcal{F}_t)$-martingale with quadratic variation
\begin{align*}
\langle M\rangle_t
&=\int_{0}^{t}\int_{-\infty}^{\infty}
 \left[G(t_2-r,x_2-z)-G(t_1-r,x_2-z)\right]^2\phi(r,z)^2dzdr  \\
&\leq\int_{0}^{t}\int_{-\infty}^{\infty}
 \left[G(t_2-r,x_2-z)-G(t_1-r,x_2-z)\right]^2dzdr  \\
&\leq C\Delta(p^{(1)}-p^{(2)})^{2}
\end{align*}
by Lemma \ref{lem:standard-Gaussian}.  Thus $M_t$
is a time-changed Brownian motion with time scale $\tau(t)$ bounded by
\begin{equation*}
T_{\text{max}}=C\Delta(p^{(1)}-p^{(2)})^2
\end{equation*}
for $0\leq t\leq t_1$.  Noting that
$I_1=M_{t_1}$, we obtain from the reflection principle for Brownian motion and standard Gaussian estimates that
\begin{align*}
P\{|I_1|>\lambda/3\}
&\leq P\left\{\sup_{0\leq s\leq T_{\text{max}}}|B_s|>\lambda/3\right\}
\leq CP\{|B_{T_\text{max}}|>\lambda/3\}
\leq C_0\exp\left(-C_1\lambda^2T_\text{max}^{-1}\right)
\end{align*}
for appropriate constants $C_0,C_1>0$.

Similar estimates hold for $I_2$ and $I_3$.  Combining these estimates and
using the definition of $T_\text{max}$ finishes the proof of Lemma
\ref{lem:time-changed-bm}.
\end{proof}

\noindent{\em Probability bounds for the modulus of continuity}
\vskip 12pt

In this section, we combine Lemmas \ref{lem:prob-est} and
\ref{lem:time-changed-bm} to get the probability bound in Lemma \ref{lem:modulus-bound} below.
For this section, let
\begin{equation}
\label{eq:n-def-1}
N^{(3)}(t,x)=N^{(3)}(t,x,\phi)
 =\int_{0}^{t}\int_{-\infty}^{\infty}G(t-r,x-z)\phi(r,z)W(dz,dr),
\end{equation}
where $\phi(r,z)$ is a jointly measurable and $(\mathcal{F}_t)$-adapted $\IR^{d \times d}$-valued process, and for some $\phi_1 \in \IR_+$,
\begin{equation}
\label{eq:n-def-1a}
\sup_{r,z}|\phi(r,z)|\leq\phi_1,\qquad \mbox{a.s.}
\end{equation}
We will be using the jointly continuous version of $N^{(3)}$ (which exists by Lemma \ref{lem:standard-Gaussian} and \cite[Propositions 4.3 \& 4.4]{chen_dal}).

\begin{lemma}
\label{lem:modulus-bound}
Fix $\lambda_0>0$. There exist constants $C_0$ and $C_1$ such that the following holds.  For $\rho \in \, ]0,1]$ and $\lambda \geq \lambda_0$, for each rectangle $R\subset R_0 = [1,2] \times [0,1]$ of dimensions $\rho^4\times\rho^2$,  let $A_\lambda(R)$ be the event that for all $p^{(1)},p^{(2)}\in R$,
\begin{equation*}
\big|N^{(3)}(p^{(1)})-N^{(3)}(p^{(2)})\big|
 \leq \lambda\Delta(p^{(1)}-p^{(2)})
 \log_+\left(1/\Delta(p^{(1)}-p^{(2)})\right),
\end{equation*}
where for $\gamma>0$, $\log_+(\gamma) := \max(1,\log_2(\gamma))$. Then
\begin{equation*}
P(A_\lambda(R)^c)\leq C_0\exp\left(-C_1\lambda^2\phi_1^{-2}\log_+^2(1/\rho)\right).
\end{equation*}
\end{lemma}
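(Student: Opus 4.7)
The plan is to apply the chaining framework of Lemmas \ref{lem:chaining}--\ref{lem:prob-est} to $Y=N^{(3)}$, with the single-pair tail estimate of Lemma \ref{lem:time-changed-bm} supplying the $\varepsilon_n$'s, and with the chaining sequence chosen so that its geometric sum matches the target modulus $\lambda\,\Delta\,\log_+(1/\Delta)$. Concretely, I would take
\[
\delta_n = c\,\lambda\, n\, 2^{-n}
\]
for a sufficiently small universal constant $c>0$. The extra factor of $n$ is precisely what produces the logarithmic correction: nearest neighbours in $\mathcal{G}_n$ are at $\Delta$-distance $2^{-n}$, so for $p^{(1)},p^{(2)}\in\mathcal{G}\cap R$ with $n_0=\lfloor\log_2(1/\Delta(p^{(1)}-p^{(2)}))\rfloor$, Lemma \ref{lem:chaining} combined with the identity $\sum_{n\geq n_0} n\,2^{-n}=2(n_0+1)2^{-n_0}$ yields
\[
\bigl|N^{(3)}(p^{(1)})-N^{(3)}(p^{(2)})\bigr|\leq 80\,c\,\lambda\,(n_0+1)\,2^{-n_0},
\]
and the bounds $\Delta\geq 2^{-n_0-1}$ and $\log_+(1/\Delta)\geq n_0\vee 1$ show this is at most $\lambda\,\Delta\,\log_+(1/\Delta)$ as soon as $c$ is small enough (e.g.\ $c=1/320$). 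I would then extend the bound from $\mathcal{G}\cap R$ to all of $R$ using joint continuity of $N^{(3)}$ and the density of $\mathcal{G}\cap R$ in $R$.

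On the probability side, Lemma \ref{lem:time-changed-bm} yields
\[
\varepsilon_n = C_0\exp\bigl(-C_1\,c^2\,\lambda^2\,n^2\,\phi_1^{-2}\bigr),
\]
so the remaining task is to bound $\varepsilon=4\sum_{n\geq n_{R_1}}2^{6n}\varepsilon_n$ via Lemma \ref{lem:prob-est}. A short geometric check gives the lower bound $n_{R_1}\geq \log_+(1/\rho)-C'$ for a universal $C'$: for any integer $n\leq\log_2(1/\rho)-1$ the spacings $2^{-4n}$ and $2^{-2n}$ strictly exceed the dimensions $\rho^4$ and $\rho^2$ of $R$, so $\mathcal{G}_n\cap R$ contains at most one point and is trivially contained in a single type-$n$ rectangle. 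Using $n^2\geq n\,n_{R_1}$ for $n\geq n_{R_1}$, the tail sum becomes a geometric series in $n$ with ratio $2^6\exp\bigl(-\tfrac{1}{2}C_1 c^2\lambda^2 n_{R_1}\phi_1^{-2}\bigr)$, times the prefactor $\exp(-\tfrac{1}{2}C_1 c^2\lambda^2 n_{R_1}^2\phi_1^{-2})$.

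The main obstacle is the final bookkeeping: absorbing the factor $2^{6n_{R_1}}\leq C\rho^{-6}$ into the exponential while keeping the constants uniform in $\phi_1$, $\rho\in(0,1]$ and $\lambda\geq\lambda_0$. I would handle this by a dichotomy. In the regime $\lambda^2\phi_1^{-2}n_{R_1}\geq M$ for a suitable threshold $M$, the geometric ratio above is at most $1/2$, the sum is at most twice its first term, and the prefactor $2^{6n_{R_1}}$ is absorbed into $\exp(-C_1\lambda^2\phi_1^{-2}\log_+^2(1/\rho))$ by slightly shrinking $C_1$ and using $n_{R_1}^2\geq\tfrac{1}{2}\log_+^2(1/\rho)$ for $\rho$ small enough. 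In the complementary regime $\lambda^2\phi_1^{-2}n_{R_1}<M$, the quantity $\lambda^2\phi_1^{-2}\log_+^2(1/\rho)$ is bounded in terms of $M$, so the right-hand side of the lemma is bounded below by a positive constant and the inequality holds trivially once $C_0$ is chosen large enough, thanks to the hypothesis $\lambda\geq\lambda_0>0$.
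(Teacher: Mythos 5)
Your overall strategy is the same as the paper's: apply the chaining machinery (Lemmas \ref{lem:chaining}--\ref{lem:prob-est}) to $Y=N^{(3)}$ with the nearest-neighbour tail supplied by Lemma \ref{lem:time-changed-bm}, with $\delta_n$ of order $\lambda\,n\,2^{-n}$, and then show $\varepsilon=4\sum_{n\ge n_{R_1}}2^{6n}\varepsilon_n$ is dominated by the desired exponential. The identity $\sum_{n\geq n_0}n2^{-n}=2(n_0+1)2^{-n_0}$ and the pointwise check that $c$ small enough gives the modulus bound are both correct. However there is a genuine gap.

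The gap is the choice $\delta_n=c\lambda n 2^{-n}$ instead of $\delta_n=c\lambda(n+1)2^{-n}$, which is what the paper uses. Your choice gives $\delta_0=0$, so the corresponding probability estimate is trivial: $P\{|Y(p_0^{(1)})-Y(p_0^{(2)})|>0\}\le 1$, hence $\varepsilon_0$ is a constant independent of $\lambda$. As long as $n_{R_1}\geq 1$ this term is not in the sum and nothing goes wrong; but the lemma allows $\rho\in\,]1/2,1]$, where $n_{R_1}=0$, and this case is actually used downstream (Corollary \ref{lem:osc-bound-rectangle} applies the lemma precisely with $\rho=1$). When $n_{R_1}=0$ the sum $\varepsilon$ starts at $n=0$ and is therefore bounded below by a constant, whereas the claimed right-hand side $C_0\exp(-C_1\lambda^2\phi_1^{-2}\log_+^2(1/\rho))=C_0\exp(-C_1\lambda^2\phi_1^{-2})$ tends to $0$ as $\lambda\to\infty$. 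No bookkeeping can fix this; it is an actual failure of the estimate. Your dichotomy inherits the same problem: if $n_{R_1}=0$, then $\lambda^2\phi_1^{-2}n_{R_1}=0<M$ automatically, so you are always in the second regime, but the quantity $\lambda^2\phi_1^{-2}\log_+^2(1/\rho)=\lambda^2\phi_1^{-2}$ is \emph{not} bounded in $\lambda$, contradicting your claim. (The dichotomy on $\lambda^2\phi_1^{-2}(n_{R_1}+1)$ would be correct, but only if you also fix $\delta_n$.) Replacing $n$ by $n+1$ throughout, so that $\delta_0=c\lambda>0$ and $\varepsilon_0=C_0\exp(-C_1c^2\lambda^2\phi_1^{-2})$, removes the problem and makes the final bound work exactly as you describe, using $(n_{R_1}+1)\geq\log_+(1/\rho)$.

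A smaller point: Lemma \ref{lem:prob-est} is stated for a rectangle with a corner at the origin of $\mathcal{G}$, while the $R$ in the present lemma is a general subrectangle of $R_0$ and need not contain any point of $\mathcal{G}$ at all. The paper handles this by shifting the grid to $p_0+\mathcal{G}_n$ where $p_0$ is a corner of $R$; you should mention this, and it is also what makes $n_{R_1}=\lfloor\log_2(1/\rho)\rfloor$ exactly, giving the clean identity $n_{R_1}+1\geq\log_+(1/\rho)$.
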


\begin{proof}
As in the proof of Lemma \ref{lem:time-changed-bm}, first note that by replacing $\phi$ by $\phi/\phi_1$ and $\lambda$ by $\lambda/\phi_1$, we can reduce to the case where $|\phi(r,z)|\leq1$, so we assume that
that $\phi_1=1$.  

Now let $n_1\in \IN$ be such that $2^{-n_1-1} < \rho \leq 2^{-n_1}$, and for $n\in \IN$, set
\begin{align*}
\delta_n=c\lambda (n+1)2^{-n},  \qquad
\varepsilon_n=C_0\exp\left(-C_1c^2\lambda^2n^2\right),
\end{align*}
with $c>0$ to be defined later, and $C_0$ and $C_1$ are the constants from Lemma \ref{lem:time-changed-bm}. We want to use Lemma \ref{eq:nearest-neighbor-estimate}, but this lemma was stated for rectangles with one corner at the origin, and since this is not the case for $R$, we are going to shift the grid $\cG$. Suppose that $R= [s_0,s_0+\rho^4]\times [y_0,y_0+\rho^2]$, where $p_0=(s_0,y_0) \in [1,2]\times [0,1]$. In the statement of Lemma \ref{eq:nearest-neighbor-estimate}, we replace $R_1$ by $R$, $\cG_n$ by $p_0 + \cG_n$ and $\cG$ by $p_0 + \cG$, without affecting the validity of the statement in Lemma \ref{eq:nearest-neighbor-estimate}.

Next, we use Lemma \ref{lem:time-changed-bm} in order to check
(\ref{eq:prob-est-epsilon-delta}), for $p_n^{(i)}\in p_0 +\mathcal{G}_n$ for $i=1,2$.
Lemma \ref{lem:time-changed-bm} yields
\begin{align*}
P\left\{\big|N^{(3)}(p_n^{(1)})-N^{(3)}(p_n^{(2)})\big|>\delta_n\right\}
&\leq C_0\exp\left(-C_1\delta_n^2\Delta(p_n^{(1)}-p_n^{(2)})^{-2}\right).
\end{align*}
For $p^{(i)}_n\in (p_0 + \mathcal{G}_n)\cap R$ nearest neighbor pairs in $p_0 +\mathcal{G}_n$, we have
\begin{equation*}
\Delta(p_n^{(1)}-p_n^{(2)})= 2^{-n},
\end{equation*}
and so, using the definition of $\delta_n$,
\begin{align*}
P\left\{\big|N^{(3)}(p_n^{(1)})-N^{(3)}(p_n^{(2)})\big|>\delta_n\right\}
\leq C_0\exp\left(-C_1\delta_n^22^{2n}\right)
= C_0\exp\left(-C_1c^2\lambda^2 (n+1)^2\right).
\end{align*}

Now we use Lemma \ref{eq:nearest-neighbor-estimate} with the shifted grid to complete the
proof.  In that lemma, let $Y(p)=N^{(3)}(p)$ and note that we have
verified condition \eqref{eq:prob-est-epsilon-delta} with $\varepsilon_n = C_0\exp\left(-C_1c^2\lambda^2n^2\right)$.

Also, for $p^{(1)},p^{(2)}\in (p_0+\mathcal{G}) \cap R$ and $n_0=n_0(p^{(1)},p^{(2)})$ (defined in Lemma \ref{lem:chaining}), we compute
\begin{align*}
40\sum_{m=n_0}^{\infty}\delta_m
\leq C_5\, c\lambda\, (n_0+1)\, 2^{-n_0}
\leq \lambda\Delta(p^{(1)}-p^{(2)})\log_+ \left(1/\Delta(p^{(1)}-p^{(2)})\right)
\end{align*}
for some constant $C_5$ and for $c$ small enough.

Continuing with formula \eqref{eq9.2}, we define (with $n_{R_1}=n_1$)
\begin{align*}
\varepsilon&=2\sum_{n=n_1}^{\infty}2^{6n}\varepsilon_n
=2\sum_{n=n_1}^{\infty}2^{6n}C_0\exp\left(-C_1c^2\lambda^2 (n+1)^2\right)  \\
&\leq C_0\exp\left(-C_1c^2\lambda^2 (n_1+1)^2\right)
\end{align*}
for $\lambda\geq \lambda_0$ and as usual, $C_0,C_1$ changing from line to line.

Since $n_1+1 > \log_+(1/\rho)$, the conclusion of Lemma \ref{lem:prob-est} directly implies the
conclusion of Lemma \ref{lem:modulus-bound}.
\end{proof}


\noindent{\em Probability bounds for oscillation over a rectangle}
\vskip 12pt

Now we prove an estimate similar to Lemma \ref{lem:modulus-bound}, but
for the modulus of continuity over a rectangle.

Assume that
\begin{gather*}
0\leq S_0\leq S_1\leq t\leq T ,\qquad
x\in\IR,
\end{gather*}
and let
\begin{equation}
\label{eq:n-def}
N^{(4)}(t,x)=N^{(4)}(t,x,\phi,S_0,S_1)
 =\int_{S_0}^{t}\int_{-\infty}^{\infty}G(t-r,x-z)\phi(r,z)W(dz,dr),
\end{equation}
where $\phi$ is a jointly measurable and $(\mathcal{F}_t)$-adapted $\IR^{d \times d}$-valued process, and for some $\phi_1 \in \IR_+$,
\begin{equation}\label{e6.7}
\sup_{t,x}|\phi(t,x)| \leq \phi_1\qquad \mbox{a.s.}
\end{equation}
By Lemma \ref{lem:standard-Gaussian} and \cite[Propositions 4.3 \& 4.4]{chen_dal}, $N^{(4)}$ has a continuous version.

\begin{cor}
\label{lem:osc-bound-rectangle}
Let $N^{(4)}(t,x)$ be as in (\ref{eq:n-def}). There exist constants $C_0,C_1>0$ such that for $0\leq S_0\leq S_1\leq T$ and $y_0 \in \IR$, letting
\begin{equation*}
R_1=[S_1,T]\times\left[y_0,y_0+(T-S_1)^{1/2}\right]
\end{equation*}
we have for all $\lambda >0$,
\begin{equation}
\label{eq:prob-osc-est}
P\big\{{\rm osc}_{R_1}(N^{(4)})>\lambda\big\}
 \leq C_0\exp\left(-C_1\lambda^2\phi_1^{-2}(T-S_1)^{-1/2}\right).
\end{equation}
\end{cor}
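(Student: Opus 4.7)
The plan is to deduce Corollary \ref{lem:osc-bound-rectangle} from Lemma \ref{lem:modulus-bound} by a scaling argument, after two preliminary observations.  First, although Lemma \ref{lem:modulus-bound} is stated for rectangles contained in $[1,2]\times[0,1]$, its proof is translation-invariant: it constructs a shifted dyadic grid $p_0 + \cG$ adapted to the corner $p_0$ of the rectangle, and the nearest-neighbor tail estimate provided by Lemma \ref{lem:time-changed-bm} depends only on the $\Delta$-distance between grid points. So the same statement holds for any rectangle of the right shape. Second, the lemma applies verbatim to $N^{(4)}$ in place of $N^{(3)}$: the lower integration limit $S_0$ is immaterial for the $L^2$ norms of the increments that drive the tail estimate, and (\ref{e6.7}) is the same sup-norm bound on $\phi$ as in Lemma \ref{lem:modulus-bound}.

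With this in hand, I would set $\rho := (T-S_1)^{1/4}$, so that the rectangle $R_1=[S_1,T]\times[y_0,y_0+(T-S_1)^{1/2}]$ has precisely the dimensions $\rho^4\times \rho^2$ required. Applying Lemma \ref{lem:modulus-bound} (in its translated form) yields, for $\mu\geq\lambda_0$, an event $A_\mu(R_1)$ of probability at least $1-C_0\exp\bigl(-C_1\mu^2\phi_1^{-2}\log_+^2(1/\rho)\bigr)$ on which
\[
|N^{(4)}(p^{(1)})-N^{(4)}(p^{(2)})|\leq \mu\,\Delta(p^{(1)}-p^{(2)})\log_+\bigl(1/\Delta(p^{(1)}-p^{(2)})\bigr),\qquad p^{(1)},p^{(2)}\in R_1.
\]
Since $x\mapsto x\log_+(1/x)$ is monotone nondecreasing for small $x$ and $\Delta(p^{(1)}-p^{(2)})\leq\rho$ on $R_1$, this forces $\mbox{\rm osc}_{R_1}(N^{(4)})\leq c\,\mu\,\rho\,\log_+(1/\rho)$ on $A_\mu(R_1)$, for an absolute constant $c$.

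Given $\lambda>0$, I would then set $\mu:=\lambda/\bigl(c\,\rho\,\log_+(1/\rho)\bigr)$, making $\{\mbox{\rm osc}_{R_1}(N^{(4)})>\lambda\}\subset A_\mu(R_1)^c$. The algebraic cancellation
\[
\mu^2\log_+^2(1/\rho)\;=\;\frac{\lambda^2}{c^2\rho^2}\;=\;\frac{\lambda^2}{c^2}(T-S_1)^{-1/2}
\]
eliminates the logarithm entirely and delivers the required estimate $C_0\exp\bigl(-C_1'\lambda^2\phi_1^{-2}(T-S_1)^{-1/2}\bigr)$ with $C_1'=C_1/c^2$. The continuous modification of $N^{(4)}$ guaranteed by Lemma \ref{lem:standard-Gaussian} and \cite[Props. 4.3 \& 4.4]{chen_dal} is what permits us to identify the oscillation over $R_1$ with the supremum of increments on the countable shifted grid, so no measurability or separability issues arise.

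The principal obstacle is not the scaling itself but the admissibility requirement $\mu\geq\lambda_0$ of Lemma \ref{lem:modulus-bound}, which translates to a lower bound $\lambda\gtrsim \phi_1\rho\log_+(1/\rho)$. For smaller values of $\lambda$ the preceding argument gives nothing, and one must ensure the stated bound holds trivially. The plan is to absorb that low-$\lambda$ regime into the constants $C_0,C_1$: by making $C_0$ large and $C_1$ small enough that the right-hand side $C_0\exp\bigl(-C_1\lambda^2\phi_1^{-2}(T-S_1)^{-1/2}\bigr)$ exceeds $1$ throughout the excluded range, the inequality $P(\cdot)\leq C_0\exp(\cdots)$ is automatic there. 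One must check that this calibration is possible uniformly; if it is not, a back-up would be a coarse one-scale second-moment (Chebyshev) bound based on $E[|N^{(4)}(p)-N^{(4)}(q)|^2]\leq \phi_1^2\, C\,\Delta(p-q)^2$ from Lemma \ref{lem:standard-Gaussian}, which suffices for $\lambda$ of the same order as $\phi_1\rho$.
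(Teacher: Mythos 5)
Your plan of attack — apply the modulus‐of‐continuity bound of Lemma~\ref{lem:modulus-bound} directly with $\rho=(T-S_1)^{1/4}$, then trade the logarithmic modulus for the target exponent by choosing $\mu=\lambda/(c\rho\log_+(1/\rho))$ — is a different route from the paper's, and the algebra of the cancellation
$\mu^2\log_+^2(1/\rho)=\lambda^2/(c^2\rho^2)=\lambda^2(T-S_1)^{-1/2}/c^2$
is correct as far as it goes. Your preliminary observations (translation invariance of the shifted‐grid chaining, applicability to $N^{(4)}$ since reducing the lower limit $S_0$ can only decrease the quadratic variations that feed Lemma~\ref{lem:time-changed-bm}) are also sound. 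But there is a genuine gap precisely where you flag it, and neither of your two repairs closes it.

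The admissibility constraint $\mu\geq\lambda_0$ translates to $\lambda\gtrsim\phi_1\rho\log_+(1/\rho)$. In the excluded range $\lambda\lesssim\phi_1\rho\log_+(1/\rho)$ you want the target right‐hand side to exceed $1$, but at the upper edge of that range the exponent is $-C_1\lambda^2\phi_1^{-2}\rho^{-2}\asymp -C_1\lambda_0^2c^2\log_+^2(1/\rho)$, which tends to $-\infty$ as $\rho\to0$. No fixed $C_0$ can dominate $\exp\bigl(C\log_+^2(1/\rho)\bigr)$ uniformly in $\rho$, so the ``absorb into constants'' plan fails. The Chebyshev back‐up is also insufficient: a second‐moment bound on increments is a pointwise statement and does not control $\mbox{\rm osc}_{R_1}$ without another chaining step, and even granting that one could extract $P\{\mbox{\rm osc}_{R_1}(N^{(4)})>\lambda\}\lesssim\phi_1^2\rho^2/\lambda^2$, this polynomial tail only matches the required $C_0\exp(-C_1\lambda^2\phi_1^{-2}\rho^{-2})$ when $\lambda\asymp\phi_1\rho$, leaving the interval $\phi_1\rho\lesssim\lambda\lesssim\phi_1\rho\log_+(1/\rho)$ — which widens as $\rho\to0$ — with no bound of the right (super‐polynomial) strength. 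There is also a smaller issue: with $\rho=(T-S_1)^{1/4}$ you implicitly need $\rho\leq1$, yet the Corollary does not assume $T-S_1\leq1$.

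The paper avoids both problems in one move. It first rescales parabolically, $\tilde N(t,x)=\phi_1^{-1}(T-S_1)^{-1/4}N^{(4)}(t(T-S_1),x(T-S_1)^{1/2})$, which turns $R_1$ into a fixed unit rectangle $R_2$ and produces a process of exactly the same form driven by another white noise, with $|\tilde\phi|\leq1$. Then Lemma~\ref{lem:modulus-bound} is applied only at $\rho=1$, where the logarithmic factor is harmless because $x\log_+(1/x)\leq1$ on $(0,1]$, and the excluded range $\lambda<\lambda_0$ is handled by a single, $\rho$‐independent adjustment of $C_0$. All of the $(T-S_1)^{-1/2}$ dependence comes out of the scaling identity $P\{\mbox{\rm osc}_{R_1}(N^{(4)})>\lambda\}=P\{\mbox{\rm osc}_{R_2}(\tilde N)>\lambda\phi_1^{-1}(T-S_1)^{-1/4}\}$, not out of the logarithm in the modulus bound. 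If you want to salvage your route, the cleanest fix is to incorporate this rescaling step rather than try to exploit the log; as written, the small‐$\lambda$ regime is not covered.
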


\begin{proof}
First, note that by translation of the time and space variables, which preserves the space-time white noise and property \eqref{e6.7}, it suffices to consider the case $S_0=0$ and $y_0 = 0$, so we will assume $S_0=0$ and $y_0 = 0$ from now on.

Secondly, by considering $N^{(4)}(t,x,\phi/\phi_1,0,S_1)$ and noting that
\begin{equation*}
N^{(4)}(t,x,\phi/\phi_1,0,S_1)=\phi_1^{-1}N^{(4)}(t,x,\phi,0,S_1),
\end{equation*}
we can remove the dependence on $\phi_1$ and assume that $\phi_1 = 1$.
Thirdly, we will remove the dependence on $T-S_1$ by scaling: let
\begin{align*}
\tilde{N}(t,x)&=\phi_1^{-1}(T-S_1)^{-1/4}N^{(4)}\left(t(T-S_1),x(T-S_1)^{1/2}\right),  \\
\tilde{\phi}(t,x)&=\phi_1^{-1}\phi(t(T-S_1),x(T-S_1)^{1/2}),
\end{align*}
and note that $\tilde{N}(t,x)$ is of the form (\ref{eq:n-def}) with the following
modifications:
\begin{equation*}
\tilde{N}(t,x)
 =\int_{0}^{t}\int_{-\infty}^{\infty}G(t-r,x-z)\tilde{\phi}(r,z)\tilde{W}(dzdr),
\end{equation*}
where $\tilde{W}$ is another space-time white noise and
$|\tilde{\phi}|\leq1$.  With these transformations, the rectangle $R_1$ is replaced by
\begin{equation*}
R_2=[a,a+1]\times[0,1],
\end{equation*}
with $a=S_1/(T-S_1)$.  In particular,
\begin{align}
\label{eq:osc-unit-rectangle}
P\left\{{\rm osc}_{R_1}(N^{(4)})>\lambda\right\}
&=P\left\{{\rm osc}_{R_2}(\phi_1(T-S_1)^{1/4}\tilde{N})>\lambda\right\}   \\
&=P\left\{{\rm osc}_{R_2}(\tilde{N})>\lambda\phi_1^{-1}(T-S_1)^{-1/4}\right\}.  \nonumber
\end{align}

Thus it suffices to prove Corollary \ref{lem:osc-bound-rectangle} for $\tilde{N}$ and $S_0=0$, $y_0 = 0$, $T=S_1+1$. With these changes, since $S_0=0$, we can replace $\tilde N$ by $N^{(3)}$, as defined in \eqref{eq:n-def-1}.

We now reduce Corollary \ref{lem:osc-bound-rectangle} to Lemma
\ref{lem:modulus-bound}.  In Lemma \ref{lem:modulus-bound}, let
$R=R_2$, which is a $1\times1$ rectangle, so $\rho=1$. Let $\lambda_0 = 1$. Observe that $\vert x \log_+(1/x) \vert \leq 1$ for $x \in \,]0,1]$, and for points $p^{(1)},p^{(2)}\in R_2$, we have $\Delta(p^{(1)}-p^{(2)})\leq 1$, so
on the event $A_\lambda(R_2)$ in Lemma \ref{lem:modulus-bound}, we have
\begin{equation*}
\big|N^{(3)}(p^{(1)})-N^{(3)}(p^{(2)})\big|
 \leq \lambda
\end{equation*}
for all points $p^{(1)},p^{(2)}\in R_2$ and thus
$\text{osc}_{R_2}(N^{(3)})\leq \lambda$.
Therefore, if $A'$ is the event that $\text{osc}_{R_2}(N^{(3)})\leq \lambda$, then
we have $A_\lambda(R_2)\subset A'$ and also $(A')^c\subset A_\lambda(R_2)^c$.  
Therefore, Corollary \ref{lem:osc-bound-rectangle} will
follow if we can show that for all $\lambda>0$,
\begin{equation}
\label{eq:to-show-cor}
P(A_\lambda(R_2)^c)\leq C_0\exp\left(-C_1\lambda^2\right),
\end{equation}
where we recall that we are in the case $\phi_1=1$ and $\rho=T-S_1=1$.
However, the conclusion of Lemma \ref{lem:modulus-bound} gives
\eqref{eq:to-show-cor} for $\lambda\geq \lambda_0 = 1$.  To deal with
$0<\lambda<\lambda_0$, it suffices to increase $C_0$ if necessary, so
that $C_0\exp\left(-C_1\lambda_0^2\right)\geq1$.

This establishes \eqref{eq:to-show-cor} and finishes the proof of
Corollary \ref{lem:osc-bound-rectangle}.


\end{proof}

\noindent{\em Growth of $N^{(3)}(t,x)$ as  $|x|\to \infty$}
\vskip 12pt

Let $N^{(3)}(t,x)$ be the jointly continuous version of the process defined in \eqref{eq:n-def-1}.
\begin{lemma}
\label{lem6.8}
Fix $T>0$. There exists an almost surely finite
random variable $Z$ such that with probability one, for all $x \in \IR$,
\begin{equation*}
\sup_{t\leq T}\big|N^{(3)}(t,x)\big|\leq Z(|x|+1).
\end{equation*}
\end{lemma}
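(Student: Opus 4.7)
The plan is to cover $\IR$ by unit intervals indexed by integers, bound the oscillation of $N^{(3)}$ on each resulting rectangle $[0,T]\times[n,n+1]$ using Corollary \ref{lem:osc-bound-rectangle}, and sum probabilities via Borel--Cantelli to produce a single random variable $Z$ that works simultaneously for all $x$.

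First, I would exploit the trivial identity $N^{(3)}(0,x)\equiv 0$: for any $(t,x)\in [0,T]\times [n,n+1]$,
\begin{equation*}
|N^{(3)}(t,x)| = |N^{(3)}(t,x)-N^{(3)}(0,n)|\leq \mbox{osc}_{[0,T]\times[n,n+1]}(N^{(3)}),
\end{equation*}
so it suffices to bound this oscillation. Next, I would subdivide $[0,T]$ into unit time intervals $[k,k+1]$ for $k=0,1,\dots,\lceil T\rceil-1$. Each subrectangle $[k,k+1]\times[n,n+1]$ has precisely the parabolic shape required by Corollary \ref{lem:osc-bound-rectangle} (with $S_0=0$, $S_1=k$, $T$ there equal to $k+1$, $y_0=n$, so that $(T-S_1)^{1/2}=1$), giving
\begin{equation*}
P\bigl\{\mbox{osc}_{[k,k+1]\times[n,n+1]}(N^{(3)}) > \mu\bigr\} \leq C_0\exp(-C_1\mu^2\phi_1^{-2}).
\end{equation*}

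Since the oscillation over a union of rectangles glued along common edges is at most the sum of the individual oscillations, summing over the $\lceil T\rceil$ time slabs yields
\begin{equation*}
P\bigl\{\mbox{osc}_{[0,T]\times[n,n+1]}(N^{(3)}) > \lambda\bigr\} \leq \lceil T\rceil\, C_0\exp\bigl(-C_1'\lambda^2 \phi_1^{-2}\bigr),
\end{equation*}
where $C_1'=C_1/\lceil T\rceil^2$ (recall $T$ is fixed). Choosing $\lambda_n = C(|n|+1)$ for $C$ large enough,
\begin{equation*}
\sum_{n\in\IZ} P\bigl\{\mbox{osc}_{[0,T]\times[n,n+1]}(N^{(3)}) > C(|n|+1)\bigr\} < \infty,
\end{equation*}
so by the Borel--Cantelli lemma, almost surely only finitely many $n\in\IZ$ violate this bound. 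Define
\begin{equation*}
Z := 2\sup_{n\in\IZ} \frac{\mbox{osc}_{[0,T]\times[n,n+1]}(N^{(3)})}{|n|+1},
\end{equation*}
which is a.s. finite by the above. Then for any $x\in\IR$, taking $n=\lfloor x\rfloor$ we have $|n|+1\leq |x|+2\leq 2(|x|+1)$, and the first step gives $\sup_{t\leq T}|N^{(3)}(t,x)|\leq \tfrac{Z}{2}(|n|+1)\leq Z(|x|+1)$ uniformly in $x$ on the event of full probability.

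The proof is essentially routine once the rectangular oscillation bound in Corollary \ref{lem:osc-bound-rectangle} is in hand; the only point that requires a bit of care is the uniformity in $x\in\IR$, which is handled here by the Borel--Cantelli argument along the discrete family of strips rather than by appealing to any pointwise bound. The subdivision into unit time slabs, rather than a single giant rectangle, is needed because Corollary \ref{lem:osc-bound-rectangle} is stated for rectangles of parabolic aspect ratio $(T-S_1)\times (T-S_1)^{1/2}$; subdividing also keeps the Gaussian tail exponent in $\mu^2$ rather than in $\mu^2/\sqrt{T}$, which is immaterial here since $T$ is fixed but makes the summability in $n$ transparent.
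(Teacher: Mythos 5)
Your proof is correct and takes essentially the same route as the paper: decompose $[0,T]\times\IR$ into unit blocks $B_n=[0,T]\times[n,n+1]$, use $N^{(3)}(0,\cdot)\equiv 0$ to convert the supremum into an oscillation, bound the oscillation on each of the $\lceil T\rceil$ unit squares forming $B_n$ via Corollary \ref{lem:osc-bound-rectangle}, and conclude with a union bound and Borel--Cantelli. Your writeup is somewhat more explicit than the paper's (spelling out the choice $\lambda_n = C(|n|+1)$, the definition of $Z$ as a scaled supremum, and the $\lfloor x\rfloor$ step), but the underlying argument is identical.
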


\begin{proof}
We split up $[0,T]\times\IR$ into unit blocks $B_n=[0,T]\times[n,n+1]$ and let 
\begin{equation*}
N_n=\sup_{(t,x)\in B_n}|N^{(3)}(t,x)|.
\end{equation*}
Since $N^{(3)}(0,x) \equiv 0$, and $B_n$ is the union of at most $[T]+1$ squares, to each of which Corollary \ref{lem:osc-bound-rectangle} applies, 
for each $n\in\mathbb{Z}$, we have
\begin{equation*}
P\{N_n>\lambda(n+1)\}\leq C_0\exp\left(-C_1\lambda^2(n+1)^2\right).
\end{equation*}
Here we have incorporated $\phi_1$ into $C_1$.
By the Borel-Cantelli lemma, there exists an almost surely finite
random variable $Z$ such that with probability one, for all $x \in \IR$,
\begin{equation*}
\sup_{t\leq T}\left|N^{(3)}(t,x)\right|\leq Z(|x|+1).
\end{equation*}
\end{proof}

\section{Establishing polarity of almost all points for $d \geq 6$}\label{sec7}

The goal of this section is to prove Theorem \ref{thm11.4}. We want to study the range of $\tilde u$, which takes values in $\IR^d$ ($d \geq 6$), as $(t,x)$ varies in the time-space rectangle $R_0= [1,2] \times [0,1]$.

Let $w_{s,y,r}$ be as defined in \eqref{eq:4.2} with $t_0$, $x_0$, $\rho$ there replaced respectively by $s$, $y$, $r$. Let $\tilde K$ be the constant in Proposition \ref{prop10.3}.  For $q \geq 1$, consider the random set
$$
   G_q =\left\{(s,y) \in R_0: \exists r \in [2^{-2q},2^{-q}[ \mbox{ with } \mbox{osc}_{R_r(s,y)}(w_{s,y,r}) \leq 2 \sigma_1\tilde K f(r) \right\},
$$
where the function $f$ is defined in \eqref{eq.fr}, and the event
$$
   \Omega_{q,1} = \left\{\lambda_2(G_q) \geq \lambda_2(R_0) \left(1-\exp(-\sqrt{q}/4) \right) \right\}
$$
(here, $\lambda_2$ denotes Lebesgue measure on $\IR^2$, so $\lambda_2(R_0) = 1$). Then
\begin{align*}
   (\Omega_{q,1})^c &=  \left\{\lambda_2(G_q) < \lambda_2(R_0) \left(1-\exp(-\sqrt{q}/4) \right) \right\} \\
   &= \left\{\lambda_2(R_0\setminus G_q) > \lambda_2(R_0) \exp(-\sqrt{q}/4)  \right\}.
\end{align*}
By Markov's inequality,
\begin{equation}\label{eq11.1}
 P((\Omega_{q,1})^c) \leq \frac{E[\lambda_2(R_0\setminus G_q)]}{\lambda_2(R_0) \exp(-\sqrt{q}/4)}.
\end{equation}
The numerator is equal to
$$
   E\left[\int_{R_0} 1_{R_0\setminus G_q}(s,y) \, dsdy \right] = \int_{R_0} P\{(s,y) \in R_0\setminus G_q \}\, dsdy.
$$
By definition of $G_q$ and Theorem \ref{thm10.2}, for all $(s,y) \in R_0$,
$$
   P\{(s,y) \not\in G_q\} \leq 2 \exp\left[-\left(\log_2 \frac{1}{2^{-q}} \right)^{\frac12} \right] = 2 \exp(-\sqrt{q}),
$$
therefore, by \eqref{eq11.1},
$$
   P((\Omega_{q,1})^c) \leq 2 \exp\left[-\frac{3}{4} \sqrt{q} \right].
$$
In particular,
\begin{equation}\label{eq11.2}
   \sum_{q=1}^{\infty} P((\Omega_{q,1})^c) < +\infty.
\end{equation}

On $\Omega_{q,1}$, for each $(s,y) \in G_q$, there exists $r \in [2^{-2q},2^{-q}]$ such that
\begin{equation}\label{eq11.3}
   \mbox{osc}_{R_r(s,y)}(w_{s,y,r}) \leq 2 \sigma_1 \tilde K f(r).
\end{equation}

Define an ``anisotropic dyadic rectangle" of order $\ell$ as a rectangle in $\IR_+ \times \IR$ of the form
$$
   [m_1 2^{-4\ell},\, (m_1+1) 2^{-4\ell}] \times [m_2 2^{-2\ell},\, (m_2+1) 2^{-2\ell}],
$$
where $m_1, m_2 \in \IN$. For $(s,y) \in R_0$, let $Q_\ell(s,y)$ denote the anisotropic dyadic rectangle of order $\ell$ that contains $(s,y)$. This rectangle is called ``good" if
\begin{equation}\label{eq11.4}
   \mbox{osc}_{Q_\ell(s,y)} (\tilde u) \leq d_\ell,
\end{equation}
where
$$
   d_\ell = 8 \sigma_1 \tilde K f(2^{-\ell}).
$$
By \eqref{eq11.3}, when $\Omega_{q,1} \cap \{\tau_{K,2} \wedge \tau_{K,3} = T_0 \}$ occurs (so that, by \eqref{eq:4.3}, all the $w_{s,y,r}$ are equal to $\tilde u$), we can find a family $\cH_{q,1}$ of non-overlapping good dyadic rectangles, each of some order $\ell \in [q,2q]$, that covers $G_q$. This family is determined by the random field $\tilde u$.

Let $\cH_{q,2}$ be the family of non-overlapping dyadic rectangles of order $2q$ that meet $R_0$ but are not contained in any of rectangle of $\cH_{q,1}$. For $q$ large enough, these rectangles are contained in $[1,2]\times[0,1]$. Therefore, when $\Omega_{q,1} \cap \{\tau_{K,2} \wedge \tau_{K,3} = T_0 \}$ occurs, their number  is at most $N_q$, where
$$
    N_q\, 2^{-2q\cdot 6} \leq \lambda_2(R) \exp(-\sqrt{q}/4),
$$
so
\begin{equation}\label{eq11.5}
   N_q \leq C 2^{12 q} \exp(-\sqrt{q}/4),
\end{equation}
where $C$ does not depend on $q$.

Let $\Omega_{q,2}$ be the event ``for all dyadic rectangles $R$ of order $2q$ that meet $R_0$, the inequality
\begin{equation}\label{eq11.6}
   \mbox{osc}_{R} (\tilde u) \leq K_2 2^{-2q} q
\end{equation}
holds." The next statement is a consequence of Corollary \ref{lem:osc-bound-rectangle}.

\begin{lemma}\label{lem11.1}
There are constants $c_1$, $c_2$ such that, for $K_2$ large enough, for all $q\geq 1$, we have
$$
   P(\Omega_{q,2}^c) \leq c_1 \exp[-c_2 K_2^2 q^2].
$$
\end{lemma}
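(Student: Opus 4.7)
The plan is to apply Corollary \ref{lem:osc-bound-rectangle} to each dyadic rectangle of order $2q$ that meets $R_0$ and then take a union bound. Recall that $\tilde u$ has the mild-form decomposition
$$
\tilde u(t,x) = h(t,x) + M(t,x), \qquad h(t,x) = \int_{\IR} G(t,x-y) u_0(y)\, dy,
$$
where $M(t,x) = \int_0^t \int_{\IR} G(t-s,x-y) \sigma(u(s\wedge\tau_{K,1},y))\, W(dy,ds)$ fits the template of $N^{(4)}$ in \eqref{eq:n-def} with $S_0 = 0$, $\phi(r,z) = \sigma(u(r\wedge\tau_{K,1},z))$, and $\phi_1 = \sigma_1$ by Assumption \ref{assump2.1}(b). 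A dyadic rectangle $R$ of order $2q$ has the form $[m_1 2^{-8q},(m_1+1)2^{-8q}]\times[m_2 2^{-4q},(m_2+1)2^{-4q}]$, which matches the shape $[S_1,T]\times[y_0,y_0+(T-S_1)^{1/2}]$ in Corollary \ref{lem:osc-bound-rectangle} with $T-S_1 = 2^{-8q}$.

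First, I would show that the deterministic contribution to the oscillation is negligible. Any rectangle of order $2q$ meeting $R_0$ has its time coordinate in $[1-2^{-8q},\,2+2^{-8q}]\subset[\tfrac12,3]$ for $q\geq 1$, and on this time region the standard heat-kernel estimates $|\partial_x G(t,\cdot)|_{L^1}\leq C t^{-1/2}$ and $|\partial_t G(t,\cdot)|_{L^1}\leq C t^{-1}$ combined with $|u_0|\leq K_0$ give $|\partial_x h|+|\partial_t h|\leq C K_0$ on $R$. Hence $\mbox{osc}_R(h) \leq C K_0\, 2^{-4q}$, which is at most $\tfrac12 K_2\, 2^{-2q} q$ as soon as $K_2 \geq 2CK_0$ (and $q\geq 1$).

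Next, I would apply Corollary \ref{lem:osc-bound-rectangle} to $M$ on each such $R$ with $\lambda = \tfrac12 K_2\, 2^{-2q} q$. Since $(T-S_1)^{-1/2} = 2^{4q}$ and $\phi_1 = \sigma_1$, this yields
$$
P\!\left\{\mbox{osc}_R(M) > \tfrac12 K_2\, 2^{-2q} q\right\} \leq C_0 \exp\!\left(-\tfrac{C_1}{4\sigma_1^2}\, K_2^2 q^2\right).
$$
Combining with the deterministic bound, the event in \eqref{eq11.6} for a single $R$ fails with the same probability (up to constants).

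Finally, I would count: the number of dyadic rectangles of order $2q$ meeting $R_0=[1,2]\times[0,1]$ is at most $(2^{8q}+1)(2^{4q}+1)\leq 4\cdot 2^{12q}$. A union bound therefore gives
$$
P(\Omega_{q,2}^c) \leq 4\cdot 2^{12q}\, C_0 \exp\!\left(-\tfrac{C_1}{4\sigma_1^2}\, K_2^2 q^2\right).
$$
Choosing $K_2$ large enough that $\tfrac{C_1}{4\sigma_1^2} K_2^2 > 24\log 2$, the Gaussian factor absorbs $2^{12q}$ for all $q\geq 1$, producing constants $c_1, c_2 > 0$ such that $P(\Omega_{q,2}^c)\leq c_1 \exp(-c_2 K_2^2 q^2)$. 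The argument is essentially routine given Corollary \ref{lem:osc-bound-rectangle}; the only thing to watch is that the cardinality factor $2^{12q}$ is dominated by the Gaussian exponent $\exp(-cK_2^2 q^2)$, which is automatic once $K_2$ is taken large.
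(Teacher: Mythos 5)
Your proof is correct and follows essentially the same route as the paper: decompose $\tilde u$ into the deterministic heat-semigroup term plus the stochastic integral, apply Corollary \ref{lem:osc-bound-rectangle} with $T-S_1 = 2^{-8q}$ and $\phi_1 = \sigma_1$, and beat the $2^{12q}$ union-bound count with the $\exp(-cK_2^2 q^2)$ Gaussian factor by taking $K_2$ large. The only cosmetic difference is that you bound the deterministic term via explicit heat-kernel derivative estimates and $|u_0|\leq K_0$, while the paper simply observes that $(t,x)\mapsto I(t,x)$ is $C^\infty$, hence Lipschitz, on the compact $R_0$.
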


\begin{proof} Let $R$ be a dyadic rectangle of order $2q$ that meets $R_0$. Consider the event
$$
   H(R) = \left\{ \mbox{\rm osc}_R (\tilde u) \leq K_2 2^{-2q} q \right\}.
$$
Recall from \eqref{e3.5} that $\tilde u(t,x) = I(t,x) + N(t,x)$, where $I(t,x)$ is a deterministic integral and $N(t,x)$ is a stochastic integral of the same form as the process $N^{(4)}(t,x)$ in \eqref{eq:n-def}, with the bound $\phi_1 = \sigma_1$ given by Assumption \ref{assump2.1}(b). We note that on $R_0 = [1,2]\times [0,1]$, $(t,x) \mapsto I(t,x)$ is $C^\infty$, hence Lipschitz continuous with some Lipschitz constant $\tilde L$. Choose $K_2 \geq 2 \tilde L$. Then for $q \geq 1$, $\mbox{\rm osc}_R(I) \leq \tilde L \, 2^{-4q+1} \leq \tilde L 2^{-2q}$, therefore,
$$
   H(r)^c = \{\mbox{\rm osc}_R(\tilde u) > K_2 2^{-2q} q \} = \left\{\mbox{\rm osc}_R(N) \geq \tfrac{K_2}{2} 2^{-2q} q\right\}.
$$
By Corollary \ref{lem:osc-bound-rectangle} applied to $N(t,x)$ with $T - S_1 = 2^{-8q}$, we see that
\begin{align*}
   P(H(r)^c) &\leq P\left\{\mbox{\rm osc}_R(N) \geq \tfrac{K_2}{2} 2^{-2q} q\right\} \\
 &\leq C_0 \exp\left[-C_1 \left(\tfrac{K_2}{2} 2^{-2q} q \right)^2 \sigma_1^{-2} (2^{-8q})^{-1/2} \right] \\
 &= C_0 \exp\left[-\tilde C_1 K_2^2 q^2 \right].
\end{align*}
It follows that
$$
   P(\Omega_{q,2}^c) \leq 2^{12 q} C_0 \exp\left[-\tilde C_1 K_2^2 q^2 \right] \leq c_1 \exp[-c_2 K_2^2 q^2]
$$
for $K_2$ large enough. This proves Lemma \ref{lem11.1}.
\end{proof}

We continue working towards the proof of Theorem \ref{thm11.4}: We choose $K_2$ large enough so that
\begin{equation}\label{eq11.6a}
    \sum_{q = 1}^{\infty} P((\Omega_{q,2})^c) < +\infty:
\end{equation}
this is possible by Lemma \ref{lem11.1}.

Set $\cH_q = \cH_{q,1} \cap \cH_{q,2}$. This is a non-overlapping cover of $R_0$ (because of how dyadic rectangles fit together). Set
\begin{equation*}
\begin{array}{lll}
   r_A &= d_\ell &\mbox{if } A \in \cH_{q,1}\mbox{ and }A \mbox{ is of order } \ell \in [q,2q],\\
   r_A &= K_2\, 2^{-2q} q & \mbox{if } A \in \cH_{q,2}.
   \end{array}
\end{equation*}
Define
$$
   \Omega_q = \Omega_{q,1} \cap \Omega_{q,2}.
$$

\begin{lemma}\label{lem11.2} For $x>0$, let
\begin{equation}\label{eq11.7}
   \zeta(x) = x^6 \log_2\log_2\frac{1}{x}.
\end{equation}
For $q$ large enough, if $\Omega_{q,1} \cap \{\tau_{K,2} \wedge \tau_{K,3} = T_0 \}$ occurs, then
$$
   \sum_{A \in \cH_q} \zeta(r_A) \leq K \lambda_2(R_0).
$$
\end{lemma}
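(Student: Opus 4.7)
The plan is to split the sum according to the two families $\cH_{q,1}$ and $\cH_{q,2}$ and bound each contribution separately, exploiting the fact that the gauge function $\zeta$ is essentially designed so that the $\log_2\log_2$ factors cancel against the corresponding factors hidden in $f$.

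First, I would handle the contribution from $\cH_{q,1}$, which is the main term. For $A\in \cH_{q,1}$ of order $\ell\in[q,2q]$, I would substitute the definitions: $r_A=d_\ell=8\sigma_1\tilde K f(2^{-\ell})=8\sigma_1\tilde K\,2^{-\ell}(\log_2\ell)^{-1/6}$, so that
\begin{equation*}
r_A^6=(8\sigma_1\tilde K)^6\,2^{-6\ell}(\log_2\ell)^{-1}.
\end{equation*}
A short estimate shows that $\log_2(1/r_A)=\ell+O(\log\log\ell)$ for $\ell$ large, hence $\log_2\log_2(1/r_A)\le 2\log_2\ell$ for $\ell\ge q$ with $q$ large. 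Multiplying, the $\log_2\ell$ factors cancel and one gets $\zeta(r_A)\le C_1\,2^{-6\ell}=C_1\,\lambda_2(A)$ for an explicit constant $C_1$ depending only on $\sigma_1$ and $\tilde K$. Since the rectangles of $\cH_{q,1}$ are nonoverlapping and contained in $R_0$,
\begin{equation*}
\sum_{A\in \cH_{q,1}}\zeta(r_A)\le C_1\sum_{A\in \cH_{q,1}}\lambda_2(A)\le C_1\,\lambda_2(R_0).
\end{equation*}

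Next, for $\cH_{q,2}$, I would simply use the bound \eqref{eq11.5} on $\#\cH_{q,2}$. Here $r_A=K_2 2^{-2q}q$, so $\log_2(1/r_A)=2q-\log_2(K_2 q)\le 2q$ and $\log_2\log_2(1/r_A)\le 2\log_2 q$ for large $q$; hence $\zeta(r_A)\le 2K_2^6 q^6\,2^{-12q}\log_2 q$. Combining with $\#\cH_{q,2}\le N_q\le C\,2^{12q}\exp(-\sqrt{q}/4)$ gives
\begin{equation*}
\sum_{A\in\cH_{q,2}}\zeta(r_A)\le C\,K_2^6 q^6 \log_2 q\,\exp(-\sqrt{q}/4),
\end{equation*}
which tends to $0$ and is $\le 1$ for $q$ large enough. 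Adding the two bounds yields the lemma with $K:=C_1+1$.

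The only genuinely delicate computation is the cancellation in the first step: one has to verify that $\log_2\log_2(1/r_A)/\log_2\ell$ is bounded as $\ell\to\infty$, which relies on $r_A$ being comparable to $2^{-\ell}$ up to a slowly varying factor. Everything else is accounting. Note that the hypothesis $\Omega_{q,1}\cap\{\tau_{K,2}\wedge\tau_{K,3}=T_0\}$ is used only implicitly — it guarantees that $\cH_{q,1}$ can actually be chosen as a family of good dyadic rectangles of orders in $[q,2q]$ covering $G_q$, which is the input needed for the first estimate, while $\cH_{q,2}$ is controlled by the complementary deterministic bound $N_q$ on the remainder.
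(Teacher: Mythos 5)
Your proof is correct and follows essentially the same two-step decomposition as the paper's: for $\cH_{q,1}$ you observe that $\zeta(d_\ell)$ is comparable to $2^{-6\ell}=\lambda_2(A)$ because the $\log_2\log_2$ factors in $\zeta$ and $f$ cancel, and you bound the $\cH_{q,2}$ contribution by $N_q\cdot\zeta(K_2 2^{-2q}q)$ using \eqref{eq11.5}, which is exactly the paper's argument. The only cosmetic difference is that you spell out the cancellation (the estimate $\log_2\log_2(1/r_A)\le 2\log_2\ell$) more explicitly, whereas the paper absorbs these comparisons into the generic constant $K$.
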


\begin{proof} For $A \in \cH_{q,1}$, if $A$ is of order $\ell \in [q,2q]$, then
$$
   \zeta(r_A) \leq K \left(\frac{2^{-\ell}}{(\log_2 \ell)^{1/6}} \right)^6 \log_2\log_2 2^\ell = K 2^{-6 \ell},
$$
and the right-hand side is the volume of an anisotropic rectangle of order $\ell$.

For $q$ large enough and for $A \in \cH_{q,2}$,
$$
   \zeta(r_A) \leq K (2^{-2q} q)^6 \log_2(2q),
$$
hence by \eqref{eq11.5}, on $\Omega_{q,1} \cap \{\tau_{K,2} \wedge \tau_{K,3} = T_0 \}$,
\begin{align*}
    \sum_{A \in \cH_{q,2}} \zeta(r_A) &\leq K (2^{-2q} q)^6 \log_2(2q) \cdot C 2^{12q} \exp(-\sqrt{q}/4) \\
    &= C K q^6 \log_2(2q) \exp(-\sqrt{q}/4).
\end{align*}
Therefore, since rectangles in $\cH_{q,1}$ are non-overlapping and contained in $R_0 = [1,2]\times [0,1]$,
$$
   \sum_{A \in \cH_q} \zeta(r_A) \leq K \lambda_2(R_0) + C K q^6 \log_2(2q) \exp(-\sqrt{q}/4).
$$
The first term on the right-hand side does not depend on $q$, while the second has limit $0$ as $q\to \infty$, so Lemma \ref{lem11.2} is proved.
\end{proof}

For each $A \in \cH_q$, we pick a distinguished point $(s_A,y_A) \in A$ (say, the lower left corner). Let $B_A$ be the Euclidean ball in $\IR^d$ centered at $\tilde u(s_A,y_A)$ with radius $r_A$.

\begin{lemma}\label{lem11.3}
Let $\cF_q$ be the family of balls $(B_A,\, A \in \cH_q)$. For $q$ large enough, on $\Omega_{q} \cap \{\tau_{K,2} \wedge \tau_{K,3} = T_0 \}$, $\cF_q$ covers the random set
$$
   \tilde M = \{\tilde u(s,y): (s,y) \in R_0 \}.
$$
Notice that $\tilde M \subset \IR^d$ is the range of $\tilde u$ as $(s,y)$ varies in $R_0= [1,2]\times [0,1]$.
\end{lemma}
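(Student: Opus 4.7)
The plan is to show that for every $(s,y) \in R_0$ there is some $A \in \cH_q$ containing $(s,y)$ with $|\tilde u(s,y) - \tilde u(s_A,y_A)| \leq r_A$, which immediately gives $\tilde u(s,y) \in B_A$ and hence $\tilde M \subset \bigcup_{A \in \cH_q} B_A$.

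First I would verify the covering property. By construction, $\cH_{q,1}$ is a non-overlapping family of anisotropic dyadic rectangles of orders in $[q,2q]$ covering $G_q$, and $\cH_{q,2}$ consists of those anisotropic dyadic rectangles of order $2q$ that meet $R_0$ but are not contained in any element of $\cH_{q,1}$. Since any dyadic rectangle of order $2q$ is either contained in some element of $\cH_{q,1}$ (if it lies inside one of those good rectangles) or belongs to $\cH_{q,2}$, the family $\cH_q = \cH_{q,1} \cup \cH_{q,2}$ tiles a neighborhood of $R_0$ in a non-overlapping way (for $q$ large enough so that rectangles of order $2q$ meeting $R_0$ lie in $[1,2]\times[0,1]$, as already noted before \eqref{eq11.5}). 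Thus every $(s,y) \in R_0$ lies in some $A \in \cH_q$.

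Next I would handle the two cases for the bound $|\tilde u(s,y)-\tilde u(s_A,y_A)|\leq r_A$ when $(s,y) \in A \in \cH_q$. If $A \in \cH_{q,1}$ has order $\ell \in [q,2q]$, then $A$ is good, meaning that \eqref{eq11.4} holds, i.e.\ $\mbox{osc}_A(\tilde u) \leq d_\ell = r_A$; this uses the fact that on $\{\tau_{K,2}\wedge\tau_{K,3}=T_0\}$ we have $\tilde u \equiv w_{s,y,r}$ by \eqref{eq:4.3}, which is what made it possible to pass from the $w$-oscillation bound \eqref{eq11.3} to the $\tilde u$-oscillation bound \eqref{eq11.4} in the construction of $\cH_{q,1}$. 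Consequently $|\tilde u(s,y) - \tilde u(s_A,y_A)| \leq d_\ell = r_A$. If instead $A \in \cH_{q,2}$, then $A$ is a dyadic rectangle of order $2q$ meeting $R_0$, so on $\Omega_{q,2}$ the bound \eqref{eq11.6} gives $\mbox{osc}_A(\tilde u) \leq K_2\, 2^{-2q} q = r_A$, and again $|\tilde u(s,y)-\tilde u(s_A,y_A)| \leq r_A$.

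Combining the two cases with the covering property yields the conclusion. The only point requiring care is the first case: one must confirm that the dyadic rectangles selected for $\cH_{q,1}$ genuinely satisfy \eqref{eq11.4} with constant $d_\ell = 8\sigma_1 \tilde K f(2^{-\ell})$, starting from the bound $2\sigma_1 \tilde K f(r)$ on oscillations of $w_{s,y,r}$ over $R_r(s,y)$ with $r \in [2^{-2q},2^{-q}]$. This is where the numerical factor of $8$ in the definition of $d_\ell$ is absorbed: one chooses the order $\ell$ so that $Q_\ell(s,y) \subset R_r(s,y)$ with $2^{-\ell}$ comparable to $r$, uses monotonicity/doubling of $f$ to pass from $f(r)$ to $f(2^{-\ell})$, and invokes $\tilde u \equiv w_{s,y,r}$ on the event in question. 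This passage is the main (minor) obstacle; once it is recorded, the rest of the argument is direct.
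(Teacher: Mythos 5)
Your proof is correct and takes essentially the same approach as the paper: fix $z = \tilde u(s,y) \in \tilde M$, locate $(s,y)$ in some $A \in \cH_q$, and conclude $z \in B_A$ by applying the oscillation bound \eqref{eq11.4} (with $d_\ell = r_A$) when $A \in \cH_{q,1}$ or \eqref{eq11.6} (giving $K_2 2^{-2q}q = r_A$) when $A \in \cH_{q,2}$, on the event $\Omega_q \cap \{\tau_{K,2}\wedge\tau_{K,3}=T_0\}$. Your extra remarks about verifying that $\cH_q$ covers $R_0$ and about how the factor $8$ in $d_\ell$ absorbs the passage from $\mbox{osc}_{R_r(s,y)}(w_{s,y,r}) \leq 2\sigma_1 \tilde K f(r)$ to \eqref{eq11.4} are sound, but these belong to the construction of $\cH_{q,1}$ in the paragraph before Lemma \ref{lem11.1} rather than to the proof of Lemma \ref{lem11.3} itself, which the paper deliberately keeps short by taking that construction as given.
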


\begin{proof} Fix $z \in \tilde M$. By definition, there is $(s,y) \in R_0$ such that $z = \tilde u(s,y)$. Since $\cH_q$ is a cover of $R_0$, the point $(s,y)$ belongs to some rectangle $A$ of $\cH_q$.

Consider first the case where $A \in \cH_{q,1}$. Suppose that $A$ is of order $\ell \in [q,2q]$. By \eqref{eq11.4},
$$
   \vert \tilde u(s,y) - \tilde u(s_A,y_A) \vert \leq d_\ell, \qquad\mbox{that is, }\qquad \vert z - \tilde u(s_A,y_A) \vert \leq r_A.
$$
This means that $z \in B_A$.

Now consider that case where $A \in \cH_{q,2}$. Then on $\Omega_{q,2}$, by \eqref{eq11.6},
$$
   \vert z - \tilde u(s_A,y_A)\vert = \vert \tilde u(s,y) - \tilde u(s_A,y_A)\vert \leq K_2\, 2^{-2q} q = r_A,
$$
so $z \in B_A$. The lemma is proved.
\end{proof}

\begin{prop}\label{prop11.4} Let $\lambda_6$ denote $6$-dimensional Hausdorff measure. Then $\lambda_6(\tilde M) = 0$ a.s.
\end{prop}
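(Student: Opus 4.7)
The plan is to bound the $6$-dimensional Hausdorff pre-measure of $\tilde M$ at the scale of the covering $\cF_q$ built in Lemma~\ref{lem11.3}, and to show this bound vanishes as $q\to\infty$.

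First I would apply the Borel--Cantelli lemma to \eqref{eq11.2} and \eqref{eq11.6a}: since $\sum_q P(\Omega_{q,1}^c)<\infty$ and $\sum_q P(\Omega_{q,2}^c)<\infty$, almost surely $\Omega_q=\Omega_{q,1}\cap\Omega_{q,2}$ holds for every $q$ sufficiently large. Restricting further to the event $\{\tau_{K,2}\wedge\tau_{K,3}=T_0\}$, Lemma~\ref{lem11.3} yields a cover $\cF_q=(B_A)_{A\in\cH_q}$ of $\tilde M$ for each such $q$, while Lemma~\ref{lem11.2} controls the gauge sum as $\sum_{A\in\cH_q}\zeta(r_A)\leq K\,\lambda_2(R_0)$. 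The maximum ball radius $\delta_q:=\max_{A\in\cH_q} r_A$ tends to zero: for $A\in\cH_{q,1}$ of order $\ell\in[q,2q]$ one has $r_A=d_\ell$, of order $2^{-\ell}(\log_2\ell)^{-1/6}$; for $A\in\cH_{q,2}$ one has $r_A=K_2\,2^{-2q}q$.

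The central step exploits the extra double-logarithmic factor in $\zeta(x)=x^6\log_2\log_2(1/x)$ from \eqref{eq11.7}. Since $r_A\leq\delta_q$ forces $\log_2\log_2(1/r_A)\geq\log_2\log_2(1/\delta_q)$ for every $A\in\cH_q$,
\begin{equation*}
\sum_{A\in\cH_q} r_A^6 \;\leq\; \frac{1}{\log_2\log_2(1/\delta_q)}\sum_{A\in\cH_q}\zeta(r_A) \;\leq\; \frac{K\,\lambda_2(R_0)}{\log_2\log_2(1/\delta_q)} \;\longrightarrow\; 0.
\end{equation*}
The $6$-dimensional Hausdorff pre-measure of $\tilde M$ at scale $2\delta_q$ is at most $64\sum_{A\in\cH_q} r_A^6$, so letting $q\to\infty$ yields $\lambda_6(\tilde M)=0$ on $\{\tau_{K,2}\wedge\tau_{K,3}=T_0\}$.

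The main obstacle is then the passage from this conditional statement to the unconditional \emph{a.s.}\ assertion of the proposition, complicated by the fact that $\tilde M=\tilde M_K$ itself depends on $K$ through $\tau_{K,1}$. I would address this either by a monotonicity argument in $K$, or, as is actually needed for the proof of Theorem~\ref{thm11.4}, by transferring the conclusion to the range of $u$ itself via $u=\tilde u_K$ on $\{\tau_{K,1}=T_0\}$ and sending $K\to\infty$: the limiting event is of full probability and $\lambda_6(\mathrm{range}(u))$ does not depend on $K$. Granted this packaging, the substance of the argument is the decay $\sum r_A^6\to 0$, which is precisely what the careful double-logarithmic calibration of $f$ in \eqref{eq.fr} and $\zeta$ in \eqref{eq11.7} is engineered to deliver in the critical dimension $d=6$.
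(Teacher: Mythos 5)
Your argument is essentially the same as the paper's: both bound $\sum_{A\in\cH_q}r_A^6$ by dividing the gauge sum $\sum_{A\in\cH_q}\zeta(r_A)\leq K\lambda_2(R_0)$ from Lemma~\ref{lem11.2} by the minimal double-logarithmic factor $\log_2\log_2(1/r_A)$ (you phrase the divergent quantity as $\log_2\log_2(1/\delta_q)$ with $\delta_q=\max_A r_A$, the paper uses $\log_2 q$, and these agree to leading order since $\delta_q\asymp 2^{-q}$), then invoke the covering Lemma~\ref{lem11.3} and let $q\to\infty$. Your closing remarks on the $K\to\infty$ passage correctly identify a subtlety the paper handles tersely, and the remedy you sketch (transferring to $M=\mathrm{range}(u)$, which is $K$-independent, via $u=\tilde u_K$ on $\{\tau_{K,1}=T_0\}$) is exactly how the paper's proof of Theorem~\ref{thm11.4} proceeds.
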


\begin{proof} For $q$ large enough so that $\Omega_q$ occurs, on $\{\tau_{K,2} \wedge \tau_{K,3} = T_0 \}$, by the definition of $\zeta$ in \eqref{eq11.7} and Lemma \ref{lem11.2},
$$
 \sum_{A \in \cH_q} r_A^6 \leq \frac{1}{\log_2 q} \sum_{A \in \cH_q} \zeta(r_A) \leq \frac{K \lambda_2(R_0)}{\log_2 q} \to 0
$$
as $q \to \infty$. Since the family of balls $(B_A,\, A \in \cH_q)$ covers $\tilde M$ by Lemma \ref{lem11.3}, we conclude that $\lambda_6(\tilde M) = 0$ on $\Omega_q \cap \{\tau_{K,2} \wedge \tau_{K,3} = T_0 \}$. Since $\lim_{K \uparrow \infty} P\{\tau_{K,2} \wedge \tau_{K,3} = T_0 \} = 1$ and $\Omega_q$ occurs for large enough $q$ (by \eqref{eq11.2} and \eqref{eq11.6a}), the proposition is proved.
\end{proof}

\noindent{\em Proof of Theorem \ref{thm11.4}.} We first prove that $\lambda_6( M) = 0$, where
$$
   M= \{u(s,y): (s,y) \in R_0 \}.
$$
On the event $\{\tau_{K,1} = T_0 \}$, $u$ and $\tilde u$ coincide on $[0,T_0] \times \IR$, so $M = \tilde M$, where $\tilde M$ is defined in Lemma \ref{lem11.3}, and therefore, by Proposition \ref{prop11.4},
$$
   \lambda_6(M) = 0 \qquad\mbox{a.s.~on } \{\tau_{K,1} = T_0 \}.
$$
Since $\lim_{K \uparrow \infty} P\{\tau_{K,1} = T_0 \} = 1$, we conclude that $\lambda_6(M) = 0$ a.s.

Let $u(]0,\infty[\,\times \IR)$ denote the random set $\{u(s,y): (s,y) \in\, ]0,\infty[\,\times \IR  \}$. Since in the entire paper, the rectangle $R_0$ could have been replaced by any other compact rectangle in $]0,\infty[ \times \IR$, 
we deduce that $\lambda_6(u(]0,\infty[\,\times \IR)) = 0$. Therefore, for $d\geq 6$, $\lambda_d(u(]0,\infty[\,\times \IR)) = 0$, where $\lambda_d$ denotes Lebesgue-measure on $\IR^d$. By Fubini's theorem,
\begin{align*}
   0 = E\left[\int_{\IR^d} 1_{u(]0,\infty[\,\times \IR)}(z)\, \lambda_d(dz) \right] = \int_{\IR^d} P\{z \in u(]0,\infty[\,\times \IR) \} \, \lambda_d(dz),
\end{align*}
that is, for Lebesgue-almost all $z\in \IR^d$, $P\{z \in u(]0,\infty[\,\times \IR) \} = 0$. This proves Theorem \ref{thm11.4}.
\hfill $\Box$
\vskip 16pt

\noindent{\sc Acknowledgement.} The research reported in this paper was initiated at the Centre Interfacultaire Bernoulli, Ecole Polytechnique F\'ed\'erale de Lausanne, Switzerland, during the semester program ``Stochastic Analysis and Applications" in Spring 2012. We thank this intsitution for its hospitality and support.

Institut de Math\'ematiques

Ecole Polytechnique F\'ed\'erale de Lausanne

Station 8

CH-1015 Lausanne

Switzerland

Email: robert.dalang@epfl.ch
\\

Department of Mathematics

University of Rochester

Rochester, NY  14627

U.S.A.

Email: carl.e.mueller@rochester.edu
\\

Department of Statistics and Probability

Michigan State University

East Lansing, MI 48824,

USA

Email: xiao@stt.msu.edu

\end{document}